\theoremstyle{plain}
\newtheorem{thm}{Theorem}[section]
\crefname{thm}{Theorem}{Theorems}
\theoremstyle{plain}
\newtheorem{lem}[thm]{Lemma}
\crefname{lem}{Lemma}{Lemmas}
\theoremstyle{plain}
\newtheorem{cor}[thm]{Corollary}
\theoremstyle{plain}
\newtheorem*{claim*}{Claim}
\crefname{claim}{Claim}{Claims}
\theoremstyle{definition}
\theoremstyle{plain}
\newtheorem{conjecture}{Conjecture}
\crefname{myconjecture}{Conjecture}{Conjectures}
\theoremstyle{plconjectureain}
\newtheorem{prop}[thm]{Proposition}
\theoremstyle{definition}
\theoremstyle{definition}
\theoremstyle{plain}
\newtheorem{claim}[thm]{Claim}
\newtheorem{fact}[thm]{Fact}
\crefname{appsec}{Appendix}{Appendices}
\date{}
\let\originalleft\left
\let\originalright\right
\renewcommand{\left}{\mathopen{}\mathclose\bgroup\originalleft}
\renewcommand{\right}{\aftergroup\egroup\originalright}
\renewcommand*{\UrlTildeSpecial}{%
  \do\~{%
    \mbox{%
      \fontfamily{ptm}\selectfont
      \textasciitilde
    }%
  }%  
}%    
\let\Url@force@Tilde\UrlTildeSpecial
\begin{document}

\title{Anticoncentration for subgraph statistics}

\author{Matthew Kwan \thanks{Department of Mathematics, Stanford University, Stanford, CA 94305. Email: \href{mailto:mattkwan@stanford.edu} {\nolinkurl{mattkwan@stanford.edu}}. This research was done while the author was working at ETH Zurich, and is supported in part by SNSF project 178493.}\and
Benny Sudakov\thanks{Department of Mathematics, ETH, 8092 Z\"urich, Switzerland. Email:
\href{mailto:benjamin.sudakov@math.ethz.ch} {\nolinkurl{benjamin.sudakov@math.ethz.ch}}.
Research supported in part by SNSF grant 200021-175573.}\and Tuan Tran\thanks{Department of Mathematics, ETH, 8092 Z\"urich, Switzerland. Email:
\href{mailto:manh.tran@math.ethz.ch} {\nolinkurl{manh.tran@math.ethz.ch}}.
Research supported by the Humboldt Research Foundation.}}

\maketitle
\global\long\def\RR{\mathbb{R}}
\global\long\def\E{\mathbb{E}}
\global\long\def\Var{\operatorname{Var}}
\global\long\def\NN{\mathbb{N}}
\global\long\def\ZZ{\mathbb{Z}}
\global\long\def\one{\boldsymbol{1}}
\global\long\def\range#1{\left[#1\right]}
\global\long\def\Bin{\operatorname{Bin}}
\global\long\def\Ber{\operatorname{Ber}}
\global\long\def\Rad{\operatorname{Rad}}
\global\long\def\F{\mathcal{F}}
\global\long\def\S{\mathcal{S}}
\global\long\def\R{\mathcal{R}}
\global\long\def\B{\mathcal{B}}
\global\long\def\Q{\mathcal{Q}}
\global\long\def\D{\mathcal{D}}
\let\polishL\L
\DeclareRobustCommand{\L}{\ifmmode{\mathcal{L}}\else\polishL\fi}
\global\long\def\ind{\operatorname{ind}}
\global\long\def\x{\boldsymbol{\xi}}
\global\long\def\hyp{\operatorname{BL}}
\global\long\def\AS{\operatorname{AS}}
\global\long\def\Inf{\operatorname{Inf}}
\global\long\def\g{\boldsymbol{\gamma}}
\global\long\def\b{b}
\global\long\def\bb{\boldsymbol{b}}
\global\long\def\minell{\ell^{*}}
\global\long\def\st{\operatorname{St}}

\begin{abstract}
%Consider the following question.
Consider integers $k,\ell$ such that $0\le \ell \le \binom{k}2$. Given a large graph $G$, what is the fraction of $k$-vertex subsets of $G$ which span exactly $\ell$ edges? When $G$ is empty or complete, and $\ell$ is zero or $\binom k 2$, this fraction can be exactly 1. On the other hand, if $\ell$ is far from these extreme values, one might expect that this fraction is substantially smaller than 1. This was recently proved by Alon, Hefetz, Krivelevich and Tyomkyn who intiated
the systematic study of this question and proposed several natural
conjectures.

Let $\minell=\min\{\ell,\binom{k}{2}-\ell\}$. Our main result is
that for any $k$ and $\ell$, the fraction of $k$-vertex subsets that span $\ell$ edges is at most $\log^{O\left(1\right)}\left(\minell/k\right)\sqrt{k/\minell}$, which is best-possible
up to the logarithmic factor. This improves on multiple results of
Alon, Hefetz, Krivelevich and Tyomkyn, and resolves one of their conjectures.
In addition, we also make some first steps towards some analogous
questions for hypergraphs.

Our proofs involve some Ramsey-type arguments, and a number of different
probabilistic tools, such as polynomial anticoncentration inequalities,
hypercontractivity, and a coupling trick for random variables defined
on a ``slice'' of the Boolean hypercube.
\end{abstract}

\section{Introduction}

For an $n$-vertex graph $G$ and some $0\le k\le n$, consider a
uniformly random set of $k$ vertices $A\subseteq V\left(G\right)$
and define the random variable $X_{G,k}=e\left(G\left[A\right]\right)$
to be the number of edges induced by the random $k$-set $A$. The
point probability $\Pr\left(X_{G,k}=\ell\right)$ is then the fraction
of $k$-vertex subsets of $G$ which induce exactly $\ell$ edges.
If $G$ is an empty graph and $\ell=0$, or if $G$ is a complete
graph and $\ell=\binom{k}{2}$, this probability is exactly one. However,
if $\ell$ is far from these extreme values, and $G$ is sufficiently
large, one might expect $\Pr\left(X_{G,k}=\ell\right)$ to be small.
For example, Ramsey's theorem tells us that all sufficiently large
graphs must have induced $k$-vertex subgraphs that are empty or complete,
so if $\ell\ne\{0,\binom{k}{2}\}$ and $G$ is sufficiently large
then certainly $\Pr\left(X_{G,k}=\ell\right)<1$. In general, what
upper bounds can we give on $\Pr\left(X_{G,k}=\ell\right)$ for large
$G$?

Recently, Alon, Hefetz, Krivelevich and Tyomkyn \cite{AHKT} initiated
the systematic study of this question, motivated by its connections to graph inducibility\footnote{Roughly speaking, the \emph{inducibility} of a graph $H$ measures the maximum number of induced copies of $H$ a large graph can have. This notion was introduced in 1975 by Pippenger and Golumbic \cite{PG75},
and has enjoyed a recent surge of interest; see for example \cite{BHLP16,HT,Yus,KNV}.}. They proved some upper bounds on $\Pr\left(X_{G,k}=\ell\right)$
for various values of $k$ and $\ell$, and made some appealing conjectures. To state these, we recall some of their notation. Let $I\left(n,k,\ell\right)=\max\left\{ \Pr\left(X_{G,k}=\ell\right):\left|V\left(G\right)\right|=n\right\} $
be the maximum value of $\Pr\left(X_{G,k}=\ell\right)$ over all $n$-vertex
graphs, and let $\ind\left(k,\ell\right)=\lim_{n\to\infty}I\left(n,k,\ell\right)$
(one can use a standard averaging argument to show that $I\left(n,k,\ell\right)$
is a monotone nonincreasing function of $n$). The following three
conjectures appear as \cite[Conjecture~1.1, Conjecture~6.1 and Conjecture~6.2]{AHKT}.
\begin{conjecture}
\label{conj:1/e}For all $0<\ell<\binom{k}{2}$ we have $\ind\left(k,\ell\right)\le1/e+o_{k}\left(1\right)$.
\end{conjecture}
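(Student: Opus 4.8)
The plan is to combine our main theorem, which already handles large $\minell$, with a dedicated Poisson-type analysis for small $\minell$. Throughout, write $X:=X_{G,k}=\sum_{uv\in E(G)}\xi_u\xi_v$ for the number of edges in the random $k$-set, viewed as a degree-$2$ polynomial in the indicator vector $\xi\in\{0,1\}^n$ of $A$ on the slice $\binom{[n]}k$. Since $X_{\overline G,k}=\binom k2-X$, the function $\ind(k,\ell)$ is invariant under $\ell\mapsto\binom k2-\ell$, so we may assume $\ell=\minell\le\binom k2/2$. If $\minell/k\to\infty$ --- in particular whenever $\minell\ge k\log^C k$ for a suitable constant $C$ --- the main theorem gives $\Pr(X=\ell)\le\log^{O(1)}(\minell/k)\sqrt{k/\minell}=o_k(1)\le 1/e+o_k(1)$. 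Thus the remaining, and main, regime is $1\le\minell\le k\log^C k$.

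\emph{The mechanism for small $\minell$.} First, we may assume $\E[X]=\binom k2 e(G)/\binom n2=O(\minell)$, since otherwise a routine lower-tail estimate for $X$ forces $\Pr(X=\ell)=o_k(1)$; in particular $G$ is sparse and every edge-indicator $\xi_u\xi_v$ has small mean $p=\binom k2/\binom n2$. Now I would use a coupling on the slice: reveal $A$ by first choosing a uniform $(k-j)$-subset $A_0\subseteq A$ and then its remaining $j$ vertices $v_1,\dots,v_j$, for a slowly growing $j=j(k)\to\infty$ with $j=o(k)$. Up to total-variation error $o_k(1)$, conditionally on $A_0$ one has $X=e(G[A_0])+D_1+\dots+D_j$, where $D_1,\dots,D_j$ are i.i.d.\ copies of the link degree $D=|N(v)\cap A_0|$ for $v$ uniform outside $A_0$; the coupling error comes from the negligible events that two of the $v_i$ are adjacent or share a neighbour inside $A_0$. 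So, conditionally on $A_0$, the quantity $X-e(G[A_0])$ is a sum of $j\to\infty$ i.i.d.\ nonnegative integer variables, with mean $j\,\E[D]\asymp j\,\E[X]/k$ and variance $j\,\Var(D\mid A_0)$.

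\emph{Why the bound is $1/e$.} There are two cases for this conditional i.i.d.\ sum. If its variance tends to infinity --- which happens unless $G$ is simultaneously locally sparse ($\sum_v d(v)^2$ small) and has $\E[X]=O(1)$ --- then a local limit theorem shows its point probabilities are $o_k(1)$. Otherwise each $D_i$ is usually $0$, the ``$D=1$'' mass dominates ``$D\ge 2$'', and the sum is within $o_k(1)$ in total variation of a Poisson variable, whose point probabilities at any integer $\ge 1$ are at most $\sup_{\mu>0}\mu e^{-\mu}=1/e$ (any mass of $D$ at values $\ge 2$ only disperses the sum further); the subcase in which $G$ has a few vertices of large degree is brought back to this one by peeling those vertices off, since conditioning on such a $v\in A$ contributes a spread-out hypergeometric term and conditioning on $v\notin A$ recurses on $G-v$. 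Averaging over $A_0$, and crucially using $\ell=\minell\ge 1$ --- so that the Poisson point probabilities are capped at $1/e$ rather than at $1$ --- should yield $\Pr(X=\ell)\le 1/e+o_k(1)$.

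\emph{Main obstacle.} The genuine difficulty is making every $o_k(1)$ uniform over all $n$-vertex $G$ and all $\ell$ in the small regime, and in particular navigating the window $\sqrt k\lesssim\minell\lesssim k\log^C k$, where neither the main theorem nor a clean Poisson approximation of $X$ is directly available, and degenerate $A_0$ (those for which $D$ is essentially deterministic) must be ruled out. I expect the cleanest route is a Ramsey/regularity-type structural reduction: pass to a large induced subgraph of bounded complexity --- the generic extremal structure in this regime being a disjoint union of equal-sized cliques together with an induced matching --- on which $X$ becomes an explicit sum of independent $\binom{J_i}2$-terms and a binomial matching-count, and then finish via a local limit theorem. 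Verifying that this reduction costs only $o_k(1)$ in $\Pr(X_{G,k}=\ell)$ is the technical heart of the argument.
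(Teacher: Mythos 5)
This statement is \cref{conj:1/e}, which is an \emph{open conjecture} in the paper, not a theorem: the paper attributes it to Alon, Hefetz, Krivelevich and Tyomkyn, notes that they proved only the weaker bound $\ind(k,\ell)=1-\Omega_k(1)$, and the strongest related result established in the present paper is \cref{thm:1-eps}, namely $\ind_r(k,\ell)\le 1-\varepsilon(r)$ for some explicit (and very small) $\varepsilon(r)$. So there is no ``paper's own proof'' to compare against, and a correct argument here would be a genuine new result beyond the paper.

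Your sketch identifies the right extremal mechanism --- the Poisson point probability $\sup_\mu \mu e^{-\mu}=1/e$ does explain why $1/e$ appears, and the disjoint-union-of-cliques / matching constructions are the relevant tight examples --- but as written it is not a proof, and you acknowledge this yourself (``Verifying that this reduction costs only $o_k(1)\ldots$ is the technical heart''). Two concrete gaps go beyond the acknowledged ones. First, the decomposition $X = e(G[A_0])+\sum_i D_i$ with a $1/e$ bound on $\Pr(\sum D_i = m)$ for $m\ge1$ does not directly bound $\Pr(X=\ell)$: you must control $\Pr(e(G[A_0])=\ell)\cdot\Pr(\sum D_i=0\mid A_0)$, and since $A_0$ is itself a nearly-uniform $(k-j)$-set, $e(G[A_0])$ has essentially the same distribution as $X$ --- so the statement you want to prove reappears on the right-hand side, and ``averaging over $A_0$'' as stated is circular rather than a reduction. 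Second, the claimed structural dichotomy (pass to a union of equal cliques plus an induced matching on which $X$ becomes an explicit independent sum) is asserted rather than derived; a regularity/removal-type step costs a fixed proportion of edges, which is not obviously $o_k(1)$ in the small-$\minell$ regime where $G$ is very sparse, and the middle window $\sqrt k\lesssim\minell\lesssim k\log^{C}k$ --- where \cref{thm:almost} gives only an $O(1)$ bound and the Poisson heuristic is also uncontrolled --- is left explicitly open. In short: correct intuition, but not a proof, and the statement remains a conjecture.
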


\begin{conjecture}
\label{conj:superlinear}For all $k,\ell$ satisfying $\min\left\{ \ell,\binom{k}{2}-\ell\right\} =\omega_{k}\left(k\right)$,
we have $\ind\left(k,\ell\right)=o_{k}\left(1\right)$.
\end{conjecture}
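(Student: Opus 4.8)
Since $I(n,k,\ell)$ is nonincreasing in $n$ we have $\ind(k,\ell)=\inf_n I(n,k,\ell)$, so I would aim to bound $\Pr(X_{G,k}=\ell)$ for an arbitrary graph $G$ whose number of vertices $n$ may be taken as large as needed in terms of $k$; in fact I would shoot for the quantitative estimate $\log^{O(1)}(\ell^*/k)\,\sqrt{k/\ell^*}$ from the abstract, which tends to $0$ precisely when $\ell^*/k\to\infty$ and so implies the conjecture. Passing to the complement of $G$ replaces $\ell$ by $\binom k2-\ell$ and alters neither $\ind(k,\ell)$ nor $\ell^*$, so one may assume $\ell=\ell^*\le\tfrac12\binom k2$, i.e.\ $\ell/k\to\infty$. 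Write $A$ for the random $k$-set and regard $X_{G,k}=\sum_{uv\in E(G)}\one[u\in A]\,\one[v\in A]$ as a degree-$2$ polynomial in the indicator vector $\one_A$, which is uniform on the slice $\binom{V(G)}{k}$ of the Boolean cube.

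The probabilistic core is an anticoncentration estimate for this quadratic polynomial. Exposing every coordinate of $\one_A$ outside a carefully chosen set $J\subseteq V(G)$ turns $X_{G,k}$ into a linear form in $\big(\one[u\in A]\big)_{u\in J}$, with the coefficient of $\one[u\in A]$ essentially equal to the number of neighbours of $u$ inside the exposed part of $A$; a Littlewood--Offord / Kolmogorov--Rogozin bound then controls its point probabilities in terms of the number of $u\in J$ whose coefficient is nonzero and genuinely fluctuates. Making this small requires many vertices whose degree into a random set is far from constant, equivalently that $X_{G,k}$ is not close to a function of boundedly many linear statistics of $A$. Since $\one_A$ lives on a slice rather than a product space, I would pass to the $p$-biased cube with $p=k/n$ --- where the standard hypercontractivity-based polynomial-anticoncentration machinery applies --- and transport the conclusion back through a coupling between slices of nearly equal size; this is where the slice-coupling trick is needed. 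Carried out carefully, this gives $\Pr(X_{G,k}=\ell)=k^{-\Omega(1)}$ whenever $X_{G,k}$ is ``complex'' in the above sense.

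What then remains --- and where both a Ramsey-type argument and the hypothesis $\ell^*=\omega(k)$ come in --- is the low-complexity case. If $\Pr(X_{G,k}=\ell)\ge\varepsilon$, the dichotomy above should force $G$ to be $o(1)$-close to a blow-up of a fixed pattern on $r=r(\varepsilon)$ vertices: for most $A$, the value $e(G[A])$ is then determined up to a small error by the intersection sizes $n_i:=|A\cap V_i|$ of $A$ with a partition $V(G)=V_1\cup\dots\cup V_r$. Producing this partition by Ramsey/regularity-type pigeonholing inside $G$, and quantitatively matching it to the output of the anticoncentration step while controlling the error between $e(G[A])$ and the idealized quadratic, is the step I expect to be the main obstacle. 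Granting it, one is reduced to anticoncentrating a fixed quadratic $Q$ at the multivariate-hypergeometric vector $(n_1,\dots,n_r)$, whose coordinates fluctuate on scale $\sqrt{k|V_i|/n}$ about their means $k|V_i|/n$; a direct computation shows the point probabilities of $Q(n_1,\dots,n_r)$ are $O_r(1/\sigma)$, where $\sigma$ is the spread of the tightest of these hypergeometric variables, and that $\sigma$ is small only when the operating point $(k|V_i|/n)_i$ lies close to a critical point of $Q$ on the simplex $\{\sum_i n_i=k\}$ --- the situation realized by the complete split graph, which should be the extremal example. In that regime $\ell$ is forced to within $O_r(k)$ of $0$ or $\binom k2$, i.e.\ $\ell^*=O_r(k)$, which contradicts $\ell^*=\omega(k)$ once $\ell^*/k$ is large in terms of $\varepsilon$. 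Since $\varepsilon$ was arbitrary this gives $\ind(k,\ell)=o_k(1)$, and tracking the dependence on $\varepsilon=\Pr(X_{G,k}=\ell)$ through both cases upgrades it to the stated quantitative bound.
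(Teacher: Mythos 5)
Your high-level inventory of tools — quadratic anticoncentration, slice-to-product coupling, a Ramsey-type structural step, and a dichotomy that uses $\ell^*=\omega(k)$ — overlaps with what the paper uses for \cref{thm:almost}, but the specific execution deviates in ways that leave real gaps.

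The most serious issue is your plan to pass to the $p$-biased cube and then ``transport the conclusion back through a coupling between slices of nearly equal size.'' The paper flags exactly this approximation as unsatisfactory: if $G$ were a clique then $X_{G,k}$ would be a constant on the slice, yet the $p$-biased version of the quadratic would still anticoncentrate spuriously because $|\boldsymbol{\xi}_{\mathrm{Ber}}|$ itself anticoncentrates. So a coupling that merely ``transports'' an anticoncentration bound from the $p$-biased cube back to the slice cannot, in general, be correct — it would wrongly conclude that $e(G[A])$ is anticoncentrated for the clique. The paper instead uses an \emph{exact} coupling (\cref{fact:coupling}): the slice $\hyp(n,n/2)$ is written as a deterministic function of an independent random permutation $\sigma$ and a Rademacher vector $\boldsymbol{\gamma}\in\Rad^{n/2}$. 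Conditioning on $\sigma$ turns $X_{G,k}$ into a genuine quadratic in $\boldsymbol{\gamma}$, to which the Meka--Nguyen--Vu inequality (\cref{thm:MNV}) is applied directly. The coefficient of $\gamma_i\gamma_j$ is a signed sum $a_{\sigma(i)\sigma(j)}-a_{\sigma(i)\sigma(j+k)}-a_{\sigma(i+k)\sigma(j)}+a_{\sigma(i+k)\sigma(j+k)}$, which vanishes identically for a clique; that is how the spurious anticoncentration is killed. Your approach has no mechanism playing this role.

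The second gap is your ``low-complexity'' branch. You conjecture that $\Pr(X_{G,k}=\ell)\ge\varepsilon$ forces $G$ to be $o(1)$-close to a blow-up of a fixed pattern, and then you anticoncentrate a quadratic in a multivariate hypergeometric vector; you yourself flag the matching of the two steps as the main obstacle. The paper avoids this dichotomy entirely for graphs. After a cheap concentration step (\cref{claim:many-edges}) that reduces to $e(G)=\Theta(\ell)$, it shows (\cref{claim:matching}) that the auxiliary graph $H$ of nonzero degree-2 coefficients always has a matching of size $\Omega(\ell/k)$ with probability $1-O(k/\ell)$, via an elementary greedy/alternating-3-path argument split only on whether many edges meet the high-degree vertices. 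There is no regularity or removal-lemma ingredient in the graph case (that only enters for the 3-uniform result). So your structural classification is both unproved and unnecessary, and the multivariate-hypergeometric critical-point analysis at the end — which you would need to make quantitative to conclude $\ell^*=O_r(k)$ — is left entirely as an assertion.

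Finally, a smaller point: the ``expose outside $J$, reduce to a linear form, apply Littlewood--Offord'' scheme is a Costello-style decoupling and would at best deliver a weaker exponent than $\sqrt{k/\ell^*}$; the $\log^{O(1)}$-loss in the paper comes from Kane's bound for the Gotsman--Linial conjecture via MNV, not from hypercontractivity, and hypercontractivity in the paper only yields the much weaker $1-e^{-O(d)}$ estimate used for \cref{thm:1-eps}.
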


\begin{conjecture}
\label{conj:sqrt}For all $k,\ell$ satisfying $\min\left\{ \ell,\binom{k}{2}-\ell\right\} =\Omega_{k}\left(k^{2}\right)$,
we have $\ind\left(k,\ell\right)=O\left(k^{-1/2}\right)$.
\end{conjecture}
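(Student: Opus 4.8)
The aim is to prove Conjecture~\ref{conj:sqrt}, i.e.\ that $\ind(k,\ell)=O(k^{-1/2})$ whenever $\minell=\Omega_{k}(k^{2})$. Replacing $G$ by its complement if necessary, we may assume $\ell=\minell\ge ck^{2}$ for a fixed constant $c>0$; and since $I(n,k,\ell)$ is nonincreasing in $n$, it suffices to bound $\Pr(X_{G,k}=\ell)$ for graphs $G$ on $n$ vertices with $n$ allowed to be arbitrarily large in terms of $k$. Write $X=X_{G,k}=\sum_{uv\in E(G)}\xi_{u}\xi_{v}$, where $\xi=\one_{A}\in\{0,1\}^{V(G)}$ indicates the uniformly random $k$-set $A$, so $X$ is a degree-$2$ polynomial evaluated on the uniform measure of the $k$-th slice of the hypercube. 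The engine of the proof is a coupling that trades this slice for a product measure: realise $A$ by first choosing a uniformly random set $M=\{P_{1},\dots,P_{k}\}$ of $k$ disjoint edges $P_{i}=\{a_{i},b_{i}\}$ of $K_{n}$ (possible since $n\ge 2k$) and then setting $A=\{c_{1},\dots,c_{k}\}$, where each $c_{i}\in P_{i}$ is picked by an independent fair coin $z_{i}\in\{0,1\}$. By a symmetry (or counting) argument $A$ is uniform on $\binom{V(G)}{k}$, so
\[
\Pr(X=\ell)=\E_{M}\bigl[\,\Pr_{z}(Q_{M}(z)=\ell)\,\bigr],
\]
where conditionally on $M$ the statistic $X$ becomes a quadratic polynomial $Q_{M}(z)$ in the \emph{independent} bits $z_{1},\dots,z_{k}$: each $\one[c_{i}c_{j}\in E(G)]$ is an affine-bilinear function of $(z_{i},z_{j})$ fixed by the at most four edges inside $P_{i}\cup P_{j}$, and the coefficient of $z_{i}z_{j}$ in $Q_{M}$ is the signed discrepancy $\delta_{ij}=\one[a_{i}a_{j}\in E]-\one[a_{i}b_{j}\in E]-\one[b_{i}a_{j}\in E]+\one[b_{i}b_{j}\in E]$.

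The next ingredient is a structural dichotomy, measured by how often the $\delta_{ij}$ vanish (equivalently, by how spread out the neighbourhoods of $G$ are). Call the instance \emph{degenerate} if, in a typical matching $M$, all but a tiny fraction of the $\delta_{ij}$ are zero; unwinding the short list of $4$-vertex patterns with vanishing discrepancy, one finds that $G$ must then agree, off a negligible set of vertices, with one of boundedly many structures assembled from a bounded number of homogeneous (clique or independent) classes joined completely or emptily. For each such structure $X$ is an explicit low-degree function of constantly many hypergeometric counts, and a direct computation gives either $\Pr(X=\ell)=O(k^{-1/2})$, or --- when one homogeneous class engulfs $(1-o(1))n$ vertices --- that $X$ concentrates within $o(k^{2})$ of $0$ or $\binom{k}{2}$, whence $\Pr(X=\ell)$ is exponentially small because $ck^{2}\le\ell\le\binom{k}{2}/2$. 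A Ramsey-type fact (large graphs contain huge cliques or independent sets) is the convenient device for locating this homogeneous structure. In the non-degenerate case, by a first- and second-moment argument over the random matching, for all but an $O(k^{-1/2})$ fraction of $M$ the quadratic $Q_{M}$ is \emph{robust}: at least $\Omega(k)$ indices $i$ have $\delta_{ij}\ne 0$ for at least $\Omega(k)$ values of $j$, with all hidden constants depending only on $c$.

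For such robust $Q_{M}$, a quadratic Littlewood--Offord inequality in the spirit of Costello--Tao--Vu and Meka--Nguyen--Vu gives $\sup_{t}\Pr_{z}(Q_{M}(z)=t)=O(k^{-1/2})$: after decoupling $Q_{M}(z)$ against an independent copy and freezing a suitable part of the bits, $X$ turns into a linear form in $\Omega(k)$ surviving bits whose coefficients are random signed sums of the $\delta_{ij}$; robustness makes $\Omega(k)$ of these coefficients nonzero with probability $1-o(1)$ --- hypercontractivity on the cube is the tool that controls the distribution of these coefficients --- and then the Erd\H{o}s--Littlewood--Offord inequality finishes. The non-robust but non-degenerate cases, typified by balanced complete multipartite graphs, are harmless: there $Q_{M}$ is a quadratic function of boundedly many $\Omega(k)$-supported linear forms, each of which has point probabilities $O(k^{-1/2})$, and one checks directly that $Q_{M}$ inherits the same bound. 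Combining the contributions of the degenerate, robust, and non-robust matchings yields $\Pr(X=\ell)=O(k^{-1/2})$.

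The main obstacle is the quantitative content of the dichotomy rather than its shape. One must choose the ``degenerate'' threshold small relative to $c$ and then prove the associated structure theorem with explicit bounds --- in particular keeping the exceptional $M$ to an $O(k^{-1/2})$ fraction so that they genuinely wash out after averaging, and keeping every implied constant a function of $c$ alone --- while also controlling, in the degenerate regime, the effect of the $o(k^{2})$ ``junk'' edges on each explicit point-probability computation. Equally delicate is extracting the \emph{clean} exponent $-1/2$: general quadratic anticoncentration bounds lose logarithmic or $k^{o(1)}$ factors, so one must exploit that the only instances evading a genuinely $k$-dimensional robust quadratic are the rank-bounded ones, for which linear Littlewood--Offord is lossless. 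Carrying all of this out with uniform constants, together with the small-$k$ base cases, is where essentially all of the effort goes; the polynomial-anticoncentration and hypercontractivity inputs themselves are standard.
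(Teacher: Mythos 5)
The statement you are attacking is a conjecture of Alon--Hefetz--Krivelevich--Tyomkyn which this paper does \emph{not} claim to resolve: \cref{thm:almost} only gives the asymptotic version $\ind(k,\ell)\le\log^{O(1)}(\minell/k)\sqrt{k/\minell}$, and \cref{sec:concluding} explains that removing the logarithmic factor (i.e.\ proving \cref{conj:kl}, which subsumes \cref{conj:sqrt}) is open and tied to the Gotsman--Linial conjecture. So a complete proof of \cref{conj:sqrt} would be a genuinely new contribution, and unsurprisingly your sketch has a gap precisely at the point where the known technology stops.

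Your coupling (pick a random $k$-edge matching $M$ of $K_n$, then flip a fair coin inside each pair to select $A$) is the same idea as \cref{fact:coupling} (stated there with $n=2k$), your signed discrepancies $\delta_{ij}$ match the degree-$2$ coefficients computed after \cref{lem:coefficients}, and your ``degenerate/robust'' split is in the spirit of the Ramsey-type argument and the two cases of \cref{claim:matching}. All of that is sound and compatible with the paper. The fatal step is the sentence ``For such robust $Q_M$, a quadratic Littlewood--Offord inequality in the spirit of Costello--Tao--Vu and Meka--Nguyen--Vu gives $\sup_t\Pr_z(Q_M(z)=t)=O(k^{-1/2})$.'' No such inequality is known. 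The decoupling-plus-Erd\H{o}s--Littlewood--Offord route you describe produces $\Pr(Q_M=t)^2\lesssim k^{-1/2}$, hence only $\Pr(Q_M=t)=O(k^{-1/4})$; this is exactly Costello--Tao--Vu, and the square loss is intrinsic to decoupling. Meka--Nguyen--Vu (\cref{thm:MNV}) does reach the exponent $1/2$, but only with a $\log^{O(1)}$ factor, because its proof routes through Kane's bound $\AS(f)=\sqrt{n}\log^{O(1)}n$ for degree-$O(1)$ polynomial threshold functions; shaving that log is equivalent to progress on Gotsman--Linial, as the paper explains. Your closing remark that the log loss ``comes from rank-bounded instances, for which linear Littlewood--Offord is lossless'' is not accurate: the polylog in MNV/Kane is incurred in the generic full-rank case, not in a degenerate corner that one can carve out and treat separately. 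As written, your robust case only yields $O(k^{-1/4})$ or $\log^{O(1)}(k)\cdot k^{-1/2}$, so the overall bound never improves to $O(k^{-1/2})$, and the conjecture is not proved.

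Two smaller points. First, your degenerate-case analysis asserts a structure theorem (``$G$ agrees, off a negligible set, with one of boundedly many structures built from homogeneous classes'') that you do not establish; the paper handles the vanishing-discrepancy regime for graphs not by classification but by the greedy matching argument in \cref{subsec:case-1} and \cref{subsec:case-2}, and only needs a removal-lemma-based classification in the much harder $3$-graph setting (\cref{lem:3-classification}). Second, your budget of an $O(k^{-1/2})$ fraction of bad matchings is more generous than needed; in the regime $\minell=\Omega(k^2)$ the matching event holds with probability $1-e^{-\Omega(k)}$, as in the discussion following \cref{claim:many-high-prob}. Neither of these is fatal, but neither repairs the missing anticoncentration input.
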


The authors of \cite{AHKT} proved some partial results for all of
these conjectures. Specifically, under the assumptions of \cref{conj:1/e}
they proved that $\ind\left(k,\ell\right)=1-\Omega_{k}\left(1\right)$,
under the assumptions of \cref{conj:superlinear} they proved that
$\ind\left(k,\ell\right)\le1/2+o_{k}\left(1\right)$, and under the
assumptions of \cref{conj:sqrt} they proved that $\ind\left(k,\ell\right)=O\left(k^{-0.1}\right)$.

Our main result is the following theorem, simultaneously implying
\cref{conj:superlinear} and an asymptotic version of \cref{conj:sqrt}.
This improves two of the aforementioned results in \cite{AHKT}.
\begin{thm}
\label{thm:almost}For all $k$ and all $0\le\ell\le\binom{k}{2}$,
let $\minell=\min\left\{ \ell,\binom{k}{2}-\ell\right\} $. We have
\[
\ind\left(k,\ell\right)\le\log^{O\left(1\right)}\left(\minell/k\right)\sqrt{\frac{k}{\minell}}.
\]
\end{thm}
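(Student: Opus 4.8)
Here is the plan I would follow.

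\medskip

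\noindent\emph{Overall approach and reductions.} The plan is to use a coupling to remove the ``slice'' dependence inherent in choosing a uniform $k$-set, reducing the problem to an anticoncentration statement for a quadratic polynomial of \emph{independent} Rademacher variables, and then to feed that statement the structural information it needs via Ramsey-type arguments. First the easy reductions: replacing $G$ by its complement turns $X_{G,k}$ into $\binom k2-X_{G,k}$, so we may assume $\ell=\minell\le\tfrac12\binom k2$; and if $\ell\le Ck$ for a large absolute constant $C$ then the claimed bound exceeds $1$ and there is nothing to prove, so we may assume $m:=\lceil\ell/k\rceil$ satisfies $C\le m\le k$ and the goal becomes $\Pr(X_{G,k}=\ell)\le\log^{O(1)}(m)/\sqrt m$. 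Finally, since $\ind(k,\ell)\le I(n,k,\ell)$ for \emph{every} $n$, we are free to take $n=n(k)$ as large as we wish --- at least a tower function of $k$ --- so that Ramsey-type extraction results apply inside $G$.

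\medskip

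\noindent\emph{The coupling.} Let $\pi$ be a uniformly random permutation of $V(G)$, put $A=\pi(\{1,\dots,k\})$, a uniformly random $k$-set, and for $1\le i\le k$ form the pair $P_i=\{\pi(i),\pi(k+i)\}$, so that $A$ meets each $P_i$ in exactly one vertex. I would then condition on everything except which endpoint of each $P_i$ lies in $A$, i.e.\ on $\pi$ restricted to $\{2k+1,\dots,n\}$ and on the labelled list $(P_1,\dots,P_k)$ of unordered pairs; by symmetry of the uniform measure these $k$ choices become independent fair coin flips $\epsilon_1,\dots,\epsilon_k\in\{\pm1\}$, and letting the chosen vertex of $P_i=\{a_i,b_i\}$ be $a_i$ if $\epsilon_i=1$ and $b_i$ otherwise, $X_{G,k}$ becomes a quadratic polynomial $Q(\epsilon)=\sum_{i<j}\one[\text{chosen vertex of }P_i\text{ is adjacent to chosen vertex of }P_j]$. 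Expanding, the $\epsilon_i\epsilon_j$-coefficient of $Q$ is $\delta_{ij}=\tfrac14\bigl(\one[a_ia_j\in E]-\one[a_ib_j\in E]-\one[b_ia_j\in E]+\one[b_ib_j\in E]\bigr)$ and the $\epsilon_i$-coefficient is $\tfrac14\bigl(d_W(a_i)-d_W(b_i)\bigr)$, the scaled difference of the degrees of $a_i,b_i$ into $W:=P_1\cup\dots\cup P_k$. Since $\Pr(X_{G,k}=\ell)\le\E\bigl[\sup_t\Pr_\epsilon(Q(\epsilon)=t)\bigr]$ with the outer expectation over the conditioning, it remains to bound the point probabilities of $Q$ for all but a negligible fraction of conditionings.

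\medskip

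\noindent\emph{From a structure lemma to the bound.} Call $Q$ \emph{$m$-rich} if it cannot be made constant by fixing fewer than $\Omega(m)$ of the $\epsilon_i$; this holds in particular when the linear part of $Q$ has $\Omega(m)$ nonzero coefficients, or when the nonzero $\delta_{ij}$ contain a matching of size $\Omega(m)$. If $Q$ is $m$-rich then a polynomial anticoncentration inequality yields $\sup_t\Pr_\epsilon(Q(\epsilon)=t)=O\bigl(\log^{O(1)}(m)/\sqrt m\bigr)$: a rich linear part is handled by the classical Erd\H{o}s--Littlewood--Offord inequality, and a rich quadratic part by a quadratic Littlewood--Offord-type bound proved through the decoupling method of Costello--Tao--Vu, with hypercontractivity used to control the error terms (if the quadratic part is rich but of small rank, one instead writes it as a bounded-degree polynomial of a few linear forms and applies a Carbery--Wright-type inequality after a central-limit approximation). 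It therefore suffices to prove a \emph{structure lemma}: if $\Pr(X_{G,k}=\ell)$ is not already below the claimed bound, then for all but a negligible fraction of the conditionings the polynomial $Q$ is $m$-rich. The $\log^{O(1)}$ loss relative to the extremal graphs $K_{a,n-a}$ is incurred precisely in this quadratic anticoncentration step.

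\medskip

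\noindent\emph{The structure lemma --- the main obstacle.} This is the heart of the argument and the step I expect to be hardest. A routine first-moment and Chebyshev argument first shows that if $\Pr(X_{G,k}=\ell)$ is non-negligible then $\ell$ is close to $\E X_{G,k}=\rho\binom k2$, where $\rho$ is the edge density of $G$; hence $\rho=\Theta(\ell/k^2)$ and $G$ is neither nearly empty nor nearly complete. Then I would split on the degree sequence of $G$. If a positive proportion of vertices of $G$ have pairwise degree differences above a suitable threshold, a typical $W$ contains many vertices of each type, $\Omega(m)$ (in fact $\Omega(k)$) of the differences $d_W(a_i)-d_W(b_i)$ are nonzero, and the linear part of $Q$ is rich. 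Otherwise all but a small constant proportion of vertices of $G$ have degree in a narrow band, i.e.\ $G$ is essentially regular (with a few exceptional vertices that must be tracked carefully); here, if $G$ is quasirandom then $X_{G,k}$ behaves like the edge count of a binomial random graph of density $\rho$ on $k$ vertices, so is anticoncentrated at scale $\Theta(\sqrt\ell)\gg\sqrt m$, which more than suffices, whereas if $G$ is nearly regular but not quasirandom a Ramsey-type extraction produces inside $G$ a large, highly structured configuration --- for instance a large vertex set whose pairs realise only two widely separated codegrees, or a large induced half-graph-like bipartite pattern --- and one checks that such a configuration forces $\Omega(m)$ of the $\delta_{ij}$ to be nonzero along a matching, so that the quadratic part of $Q$ is rich. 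The genuine work is to enumerate exactly which structured configurations must occur in a dense, nearly regular, non-quasirandom graph, to handle the exceptional vertices cleanly, and to verify in each case that the coupling really produces the asserted matching of nonzero coefficients.
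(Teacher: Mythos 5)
Your overall skeleton --- couple the slice to independent Rademachers, express $X_{G,k}$ as a quadratic polynomial $Q(\epsilon)$ of the coin flips, then apply a polynomial anticoncentration inequality once $Q$ is shown to have enough structure --- is indeed the route the paper takes. But there are several concrete problems in the middle of your plan, and they are not cosmetic.

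The central gap is your notion of ``$m$-rich'' and the claim that it implies anticoncentration. For a genuinely quadratic polynomial, having $\Omega(m)$ nonzero linear coefficients does \emph{not} imply the point probabilities are small, and does not even imply that one must fix $\Omega(m)$ variables to make $Q$ constant. Consider $Q(\epsilon)=\sum_{i=1}^{n}\epsilon_i+\epsilon_1\sum_{i=2}^n\epsilon_i=\epsilon_1+(1+\epsilon_1)\sum_{i\ge 2}\epsilon_i$: every linear coefficient is nonzero, yet $\Pr(Q=-1)\ge 1/2$, and fixing the single variable $\epsilon_1=-1$ makes $Q$ constant. The only criterion you list that actually delivers a $\log^{O(1)}(r)/\sqrt{r}$ bound is a matching of size $r$ among the nonzero top-degree coefficients; this is precisely the ``rank'' hypothesis of Meka--Nguyen--Vu, and the paper uses that theorem (\cref{thm:MNV}) and nothing else. (It is also not the Costello--Tao--Vu decoupling bound --- decoupling alone gives only a polynomially weaker exponent; the $1/\sqrt{r}$ rate rests on Kane's estimate toward the Gotsman--Linial conjecture.) So the whole ``rich linear part'' branch of your structure lemma is unsupported, and you would have to either delete it or replace it by a conditioning argument (condition on the variables outside an independent set of the quadratic coefficient graph and check the induced linear coefficients dominate), which is exactly the kind of extra care the paper avoids by never needing that branch: \cref{claim:matching} always produces a matching of size $\Omega(\ell/k)$ in the graph $H$ of nonzero quadratic coefficients.

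Two further points of divergence. First, taking $n$ to be a tower of $k$ is not only unnecessary but counterproductive: since $I(n,k,\ell)$ is nonincreasing in $n$, the paper simply sets $n=2k$, so the random pairing $\{P_1,\dots,P_k\}$ covers all of $V(G)$, there is no ``leftover'' conditioning, and the coefficients of $Q$ are order-one combinatorial quantities rather than tiny fluctuation-dominated differences. Your plan to apply Ramsey-type extraction inside an enormous $G$ is machinery the problem does not require. Second, your structure lemma (degree-sequence dichotomy, quasirandomness subcase with a local limit theorem, and Ramsey extraction of an unavoidable pattern in the nearly regular non-quasirandom case) is much heavier than what is needed. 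The paper observes that an edge of $H$ corresponds to an alternating length-$3$ path $\sigma(i)\,\sigma(j+k)\,\sigma(i+k)\,\sigma(j)$ in $G$, splits on whether at least half the edges touch the set $U$ of vertices of degree $\ge 0.9n$, and in each case runs a short greedy revealing procedure on $\sigma$ that succeeds with constant probability per step, finding a matching of size $\Omega(\ell/k)$ except with probability $e^{-\Omega(\ell/k)}$ (Claims 3.3 and 3.4). No quasirandomness dichotomy, no local CLT, no Ramsey extraction is involved. You would do well to look for such a direct combinatorial criterion for when a pair $\{i,j\}$ of coin indices gives a nonzero $\delta_{ij}$, rather than trying to classify the global structure of $G$.
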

We remark that we allow the ``$O(1)$'' term to equal zero, so the above statement still makes sense (and is in fact trivial) if $\ell^*\le k$. Note that up to the logarithmic factor \cref{thm:almost} is essentially best-possible.
Indeed, for any $s\le k$ and any $f=\omega\left(1\right)$, let $n=fk$ and
consider the $n$-vertex complete bipartite graph $G=K_{fs,fk-fs}$.
Then for $\ell=s\left(k-s\right)$ we have
\[
\Pr\left(X_{G,k}=\ell\right)\ge\frac{\binom{fs}{s}\binom{fk-fs}{k-s}}{\binom{fk}{k}}=\Theta\left(\sqrt{\frac{k}{s\left(k-s\right)}}\right)=\Theta\left(\sqrt{\frac{k}{\ell}}\right).
\]
We prove \cref{thm:almost} in \cref{sec:almost}. Our proof depends
on a polynomial anticoncentration inequality due to Meka, Nguyen and
Vu \cite{MNV16}, which itself depends on a weak version of the so-called
Gotsman--Linial conjecture in the theory of Boolean functions,
proved by Kane \cite{Kan14}. Any improvements to this anticoncentration
inequality, potentially via progress towards the Gotsman--Linial
conjecture, would result in corresponding improvements to
\cref{thm:almost}. We discuss this further in \cref{sec:concluding}. 

It is also interesting to study related questions for hypergraphs;
indeed, in \cite{AHKT} the authors specifically suggested that a
natural analogue of \cref{conj:1/e} might also hold for $r$-uniform
hypergraphs. We make a first step in this direction, generalising
a result in \cite{AHKT}. For $0<\ell<\binom{k}{r}$ and an $r$-uniform
hypergraph $G$ with at least $k$ vertices, we may define $X_{G,k}$,
$I_{r}\left(n,k,\ell\right)$ and $\ind_{r}\left(k,\ell\right)$ in
the obvious way: $X_{G,k}$ is the number of edges induced by a uniformly
random $k$-vertex subset of $G$, $I_{r}\left(n,k,\ell\right)$ is
the maximum value of $\Pr\left(X_{G,k}=\ell\right)$ over $n$-vertex
$r$-uniform hypergraphs $G$, and $\ind_{r}\left(k,\ell\right)=\lim_{n\to\infty}I_{r}\left(n,k,\ell\right)$.
\begin{thm}
\label{thm:1-eps}For any $r$ there exists $\varepsilon=\varepsilon\left(r\right)>0$
such that for any $0<\ell<\binom{k}{r}$ we have $\ind_{r}\left(k,\ell\right)\le1-\varepsilon$.
\end{thm}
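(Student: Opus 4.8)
The plan is to realise $X_{G,k}$ as a bounded-degree function on a slice of the Boolean hypercube and then combine hypercontractivity with the Paley--Zygmund inequality. Since $I_{r}(n,k,\ell)$ is nonincreasing in $n$, it suffices to bound $\Pr(X_{G,k}=\ell)$ for one convenient value of $n$, and we take $n=2k$. For a uniformly random $k$-set $A$, the indicator vector $(\one[1\in A],\dots,\one[n\in A])$ is uniform on the slice $\binom{[n]}{k}$, and
\[
X_{G,k}=\sum_{e\in E(G)}\prod_{i\in e}\one[i\in A]
\]
is a sum of functions each depending on only $r$ of the coordinates; hence $X_{G,k}$ has harmonic degree at most $r$ on $\binom{[n]}{k}$.

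We first dispose of the degenerate case. If $X_{G,k}$ is constant then every $k$-subset of $G$ spans the same number of edges; using $\sum_{v\in S}e(G[S\setminus v])=(|S|-r)e(G[S])$ one deduces that then every $(k+1)$-subset also spans the same number of edges, and iterating up to $(n-r)$-subsets (and unwinding the resulting codegree-regularity conditions) forces $G$ to be empty or complete. Since $0<\ell<\binom{k}{r}$, in this case $\Pr(X_{G,k}=\ell)=0$ and there is nothing to prove. So we may assume $X_{G,k}$ is non-constant, that is $\|X_{G,k}-\ell\|_{2}>0$.

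Now the main step. Because $k/n=1/2$ is bounded away from $0$ and $1$, hypercontractivity on the slice $\binom{[n]}{k}$ holds with a constant depending only on the degree, so there is $C_{r}$, depending only on $r$, with $\|X_{G,k}-\E X_{G,k}\|_{4}\le C_{r}\|X_{G,k}-\E X_{G,k}\|_{2}$. Let $\varepsilon=\Pr(X_{G,k}\ne\ell)$; we may assume $\varepsilon\le1/2$, since otherwise $\Pr(X_{G,k}=\ell)\le1/2$ already suffices. By Cauchy--Schwarz $|\E X_{G,k}-\ell|\le\sqrt{\varepsilon}\,\|X_{G,k}-\ell\|_{2}$, and since the variance is minimised at the mean we have $\|X_{G,k}-\E X_{G,k}\|_{2}\le\|X_{G,k}-\ell\|_{2}$; combining these with the hypercontractive bound gives $\|X_{G,k}-\ell\|_{4}\le(C_{r}+1)\|X_{G,k}-\ell\|_{2}$. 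Applying the Paley--Zygmund inequality to $Y=(X_{G,k}-\ell)^{2}\ge0$,
\[
\varepsilon=\Pr(Y>0)\ge\frac{(\E Y)^{2}}{\E[Y^{2}]}=\frac{\|X_{G,k}-\ell\|_{2}^{4}}{\|X_{G,k}-\ell\|_{4}^{4}}\ge\frac{1}{(C_{r}+1)^{4}}.
\]
As $G$ was arbitrary, $\ind_{r}(k,\ell)\le1-\varepsilon(r)$ with $\varepsilon(r)=(C_{r}+1)^{-4}$.

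The one point requiring real work---and the source of the ``coupling trick for random variables defined on a slice'' advertised in the abstract---is the slice hypercontractivity with a constant depending only on $r$: one either invokes the hypercontractive inequalities known for the Johnson scheme, or proves exactly what is needed by coupling a uniformly random $k$-subset of $[2k]$ with $2k$ independent fair coins and transferring the Bonami--Beckner inequality, taking care that the coupling error does not spoil the $(4,2)$-norm comparison for functions of degree at most $r$. Everything else is soft; in particular the same argument reproves the graph case $r=2$ with an absolute constant.
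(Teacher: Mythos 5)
Your proposal follows essentially the same approach as the paper: a combinatorial non-constancy lemma (both via inclusion--exclusion; yours goes up from $k$-subsets and then takes complements, the paper's is an induction on codegrees $\deg(S)$), then slice hypercontractivity at $p=1/2$ (the Lee--Yau/Filmus inequality, exactly as in \cref{prop:hypercontractive} and \cref{cor:hypercontractive-fourth-moment}), and finally a fourth-moment-to-anticoncentration step (your explicit Paley--Zygmund derivation does the same job as the paper's citation of the Alon--Gutin--Krivelevich lemma, \cref{lem:fourth-moment-anticoncentration}). One small correction to your closing remark: the paper does \emph{not} obtain slice hypercontractivity by coupling with independent Bernoullis and transferring Bonami--Beckner --- it invokes the Lee--Yau/Filmus inequality directly, and the concluding section notes that the invariance-principle route would only yield the weaker bound $1-9^{-r}+o_{k}(1)$; the coupling trick advertised in the abstract is deployed only for \cref{thm:almost,thm:hypergraph-sqrt}, not for this theorem.
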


A proof of this theorem for graphs appears as \cite[Theorem~1.3]{AHKT},
and proceeds via a long and complicated fourth-moment calculation.
We give a %very simple 
short proof of \cref{thm:1-eps} in \cref{sec:1-eps}
using a hypercontractive inequality. For concreteness, we remark that
\cref{thm:1-eps} holds with $\varepsilon=2^{-4/3}3^{-16r}.$

Finally, the natural hypergraph generalisation of \cref{conj:sqrt}
is that for any fixed $r$ and any $k,\ell$ satisfying $\min\{\ell,\binom{k}{2}-\ell\}=\Omega_{k}\left(k^{r}\right)$,
we have $\ind_{r}\left(k,\ell\right)=O\left(k^{-1/2}\right)$. This
problem appears to be quite difficult; we make a first step in the
case $r=3$.
\begin{thm}
\label{thm:hypergraph-sqrt}For all $k,\ell$ satisfying $\minell=\min\left\{ \ell,\binom{k}{3}-\ell\right\} =\Omega_{k}\left(k^{3}\right)$,
we have
\[
\ind_{3}\left(k,\ell\right)\le\frac{\log^{O\left(1\right)}k}{\sqrt{k}}.
\]
\end{thm}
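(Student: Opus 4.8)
The plan is to follow the architecture behind \cref{thm:almost}, adding one extra layer to deal with the fact that for $3$-uniform hypergraphs the relevant random variable is a \emph{cubic} rather than a quadratic form.

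\emph{Preliminary reductions.} Replacing $G$ by its complement (again a $3$-uniform hypergraph, with $X_{\overline G,k}=\binom k3-X_{G,k}$) we may assume $\ell=\minell$, and since $\minell=\Omega(k^{3})$ while $\binom k3=\Theta(k^{3})$ this forces $\ell=\Theta(k^{3})$. By the monotonicity of $I_{3}(n,k,\ell)$ in $n$ it suffices to bound $\Pr(X_{G,k}=\ell)$ for a $3$-uniform hypergraph $G$ on $n=Ck$ vertices, $C$ a large absolute constant, so that $A$ is a uniformly random $k$-subset of a vertex set of size $\Theta(k)$. If $\ell$ lies more than $\sqrt k$ standard deviations from $\E X_{G,k}$, Chebyshev's inequality already gives $\Pr(X_{G,k}=\ell)\le 1/k$; handling intermediate deviations by a concentration inequality for low-degree polynomials of independent variables, we may assume $\E X_{G,k}=\Theta(k^{3})$ (up to polylogarithmic factors), which forces $G$ to have edge density bounded below, hence vertex degrees of order $k^{2}$ and average pair-codegree of order $k$.

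\emph{Expansion and coupling.} Writing $\one[v\in A]=p+\zeta_{v}$ with $p=k/n$ and expanding,
\[
X_{G,k}=c_{0}+p^{2}\sum_{v}d(v)\,\zeta_{v}+p\sum_{\{u,v\}}c(u,v)\,\zeta_{u}\zeta_{v}+\sum_{\{u,v,w\}\in E(G)}\zeta_{u}\zeta_{v}\zeta_{w},
\]
where $d(v)$ is the degree of $v$, $c(u,v)$ the codegree of $\{u,v\}$, and $c_{0}$ a constant. On the slice $\binom{[n]}{k}$ one has $\sum_{v}\zeta_{v}=0$ and $\sum_{v}\zeta_{v}^{2}$ is also constant, so if $G$ is degree- and codegree-regular then the linear and quadratic terms contribute only a constant and all the fluctuation of $X_{G,k}$ sits in the cubic form $T:=\sum_{\{u,v,w\}\in E(G)}\zeta_{u}\zeta_{v}\zeta_{w}$. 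Exactly as in the proof of \cref{thm:almost}, we then pass from the slice to the independent product model via the coupling trick for the Boolean slice, losing only a $\log^{O(1)}k$ factor; this reduces the problem to anticoncentration of an honest multilinear polynomial of degree $3$ in independent centred Bernoulli variables, to which the Meka--Nguyen--Vu inequality applies. For a degree-regular dense $3$-graph the cubic form $T$ is ``spread out'': its variance $\|T\|_{2}^{2}$ has order $|E(G)|=\Theta(k^{3})$ while $\Inf_{v}(T)$ has order $d(v)=\Theta(k^{2})$, so the largest relative influence of a coordinate is $\Theta(1/k)$, and (just as the degree-$2$ case of that inequality does in \cref{thm:almost}) this is converted into the bound $\sup_{x}\Pr(T=x)\le\log^{O(1)}k\cdot k^{-1/2}$. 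Heuristically this reflects that conditioning on all but one coordinate $\zeta_{v}$ turns the coefficient of $\zeta_{v}$ into the quadratic form of the link graph of $v$ --- a dense graph on $\Theta(k)$ vertices with minimum degree of order the codegree, $\Omega(k)$ --- whose anticoncentration scale is precisely $k^{-1/2}$.

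\emph{The main obstacle.} The delicate part is the non-regular case, together with the regularisation that reduces to it. When $G$ is far from degree- or codegree-regular, the linear term above has coefficients proportional to the degree deviations $d(v)-\overline d$ and the quadratic term has coefficients proportional to the codegree deviations $c(u,v)-\overline c$; one must argue that whenever these deviations are large enough to affect the point probabilities, the resulting linear or quadratic form is itself spread out enough to anticoncentrate at scale $\lesssim k^{-1/2}$ (so that irregularity only helps), while in the complementary, essentially-regular, regime the cubic argument above applies. Turning this into a clean dichotomy requires a Ramsey-type argument to locate inside $G$ a large (on $\ge k$ vertices, with $\Omega(k^{3})$ edges) sub-hypergraph that is simultaneously close to degree-regular and close to codegree-regular, or else to extract a dominant lower-degree form from the irregularity. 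This regularisation for $3$-graphs --- rather than the anticoncentration input --- together with the bookkeeping needed to control the error terms from the conditioning and from the slice-to-product coupling, is the part I expect to be hardest; matching the hypothesis $\minell=\Omega(k^{3})$ to exactly the density needed to make all the relevant link graphs have minimum degree $\Omega(k)$ is the reason the theorem is stated in this range.
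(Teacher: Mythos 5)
The central gap is in the step you treat as ``straightforward.'' After the slice-to-Rademacher coupling (which is \emph{exact}, not approximate --- \cref{fact:coupling} expresses $\x$ as a deterministic function of a permutation $\sigma$ and i.i.d.\ Rademachers $\g$, so no loss is incurred there), the Meka--Nguyen--Vu inequality must be applied to the polynomial in $\g$ conditionally on $\sigma$, and what that inequality needs is a large \emph{matching among the nonzero degree-$3$ coefficients} of that $\g$-polynomial --- not small relative influences or large variance. Those degree-$3$ coefficients are exactly the alternating sums
\[
a_{xyz}-a_{xyz'}-a_{xy'z}-a_{x'yz}+a_{xy'z'}+a_{x'yz'}+a_{x'y'z}-a_{x'y'z'}
\]
over $6$-tuples of vertices matched up by $\sigma$, and the paper's key observation is that there exist dense, degree-regular $3$-graphs (the complete bipartite $3$-graphs $K_{a,b}^{(3)}$ with $a=b$, and more generally the $G_{A,B,M}$) on which \emph{every} such alternating sum vanishes. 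So your heuristic that ``degree-regular dense $\Rightarrow$ cubic form is spread out $\Rightarrow$ MNV gives $k^{-1/2}$'' fails precisely in the regular case you view as easy: the cubic top-degree part of the $\g$-polynomial can be identically zero while $G$ has $\Theta(k^3)$ edges of each colour, and \cref{thm:MNV} then gives nothing. (Your variance/influence computation is carried out for the $\zeta_v=\one[v\in A]-p$ variables, which are not independent, so MNV does not apply to them; once you pass to the independent $\gamma_i$'s, the coefficients are the alternating sums above, and the regularity of $G$ is exactly what kills them.)

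You correctly sense that a classification is needed, but you misidentify which direction it must go. The problem is not irregularity --- it is the degenerate dense cases where the cubic coefficients collapse. The paper handles this by (i) characterising the $3$-graphs whose induced $6$-vertex patterns never make the alternating sum nonzero (\cref{lem:3-classification}, via the Fox--Sudakov unavoidable-patterns theorem together with the induced hypergraph removal lemma), showing they are $o(n^3)$-close to some $G_{A,B,\emptyset}$; and (ii) falling back, for such $G$, on the degree-$2$ coefficients of the $\g$-polynomial (the codegree differences $\deg(\sigma(i),\sigma(j))-\deg(\sigma(i+k),\sigma(j))-\deg(\sigma(i),\sigma(j+k))+\deg(\sigma(i+k),\sigma(j+k))$), showing $\Omega(n^4)$ quadruples give a coefficient $\ge n/2$, and applying the tailored \cref{cor:second-degree-MNV}, which lets one use large degree-$(d-1)$ coefficients when the degree-$d$ coefficients are uniformly bounded. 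This two-case dichotomy --- either $\Omega(n^6)$ ``good'' $6$-tuples feeding \cref{thm:MNV}, or near-$G_{A,B,\emptyset}$ structure feeding \cref{cor:second-degree-MNV} --- is the heart of the argument and is absent from your sketch. Your proposed Ramsey-type regularisation to a simultaneously degree- and codegree-regular sub-hypergraph would not help: restricting to a sub-hypergraph changes $\ell$ and $k$, and in any case $G_{A,B,\emptyset}$ already is degree-regular, yet its cubic coefficients vanish; the failure is not one that regularisation can repair.

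A smaller issue: replacing the Azuma-type concentration on the slice (\cref{lem:concentration}) with Chebyshev at the start is too weak to assume $\E X_{G,k}=\Theta(k^3)$ with only polylogarithmic loss; but this is a minor point compared with the structural gap above.
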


We prove \cref{thm:hypergraph-sqrt} in \cref{sec:hypergraph-sqrt}.

\subsection{\label{sec:outline}Discussion and main ideas of the proofs}

Let $A=\left(a_{xy}\right)_{x,y}$ be the adjacency matrix of a graph
$G$. We can express $X_{G,k}$ as a homogeneous quadratic polynomial
\[
\frac{1}{2}\x A\x^{T}=\sum_{1\le x<y\le n}a_{xy}\xi_{x}\xi_{y},
\]
where $\x=\left(\xi_{1},\dots,\xi_{n}\right)$ is a uniformly random
length-$n$ zero-one vector with exactly $k$ ones. To prove \cref{thm:almost}
we need to upper-bound $\ind\left(k,\ell\right)$, which essentially
comes down to upper-bounding probabilities of the form $\Pr\left(\x A\x^{T}=2\ell\right)$.

This point of view suggests the application of \emph{quadratic anticoncentration
inequalities}. Indeed, initially motivated by applications in random
matrix theory \cite{CTV06}, several authors \cite{Ngu12,RV13,Cos13,MNV16}
have studied probabilities of the form $\Pr\left(\g A\g^{T}=x\right)$,
for $\g$ a sequence of \emph{independent} random variables. The general
theme is that if there are many nonzero entries arranged appropriately
in $A$, then this probability is small.

Of course, due to the condition that $\x$ has exactly $k$ ones,
it is not a sequence of independent random variables, but one might
hope that the dependencies are not too severe. For example, $\x$
is in some sense quite similar to the random vector $\x_{\Ber}=\left(\gamma_{1},\dots,\gamma_{n}\right)$
where each $\gamma_{i}$ is independently $\left(k/n\right)$-Bernoulli-distributed\footnote{We say that $\gamma$ has the $p$-Bernoulli distribution if $\Pr\left(\gamma=1\right)=p$
and $\Pr\left(\gamma=0\right)=1-p$ .}. If $A$ has few nonzero entries, then one can prove using a concentration
inequality that $\x_{\Ber}A\x_{\Ber}^{T}$ is likely to be small (and
therefore not equal to $\ell$, unless $\ell$ is itself small). It
is therefore very straightforward to apply a quadratic anticoncentration
inequality to prove a variant of \cref{thm:almost} with $\x_{\Ber}$
in place of $\x$ (meaning that $X_{G,k}$ is the number of edges
in a $\left(k/n\right)$-Bernoulli random set, instead of a uniformly
random set of exactly $k$ vertices). Actually, in general, for any
$r$-uniform hypergraph $G$, the random variable $X_{G,k}$ can be
expressed as a homogeneous degree-$r$ polynomial of $\x$. So, using
a cubic anticoncentration inequality we can similarly give an easy
proof of the ``Bernoulli version'' of \cref{thm:hypergraph-sqrt},
and using the Bonami--Beckner hypercontractive inequality
we can give an easy proof of the ``Bernoulli version'' of \cref{thm:1-eps}.

However, in the setting of this paper, approximating $\x$ with $\x_{\Ber}$
is quite unsatisfactory, because in addition to the ``genuine''
anticoncentration coming from the combinatorial structure of $G$,
there is also spurious anticoncentration arising from fluctuation
in the number of ones in $\x_{\Ber}$. For example, if $G$ is a graph clique
then $\x A\x^{T}$ is constant, while $\x_{\Ber}A\x_{\Ber}^{T}$ is
anticoncentrated purely because the number of vertices in a $\left(k/n\right)$-Bernoulli
random set is itself anticoncentrated.

In the setting of \cref{thm:1-eps} it is straightforward to overcome
this issue: we merely apply a different hypercontractive inequality
in place of the Bonami--Beckner inequality. Despite the
widespread utility of the Bonami--Beckner inequality, the
wider theory of hypercontractive inequalities does not seem to be
well-known in the combinatorics community. In our case the necessary
inequality is essentially due to Lee and Yau \cite{LY98}.

For \cref{thm:almost,thm:hypergraph-sqrt}, we use a coupling argument:
it turns out that there is a natural way to realise the distribution
of $\x$ as a function of a random permutation $\sigma$ and a certain
sequence $\g$ of i.i.d.\ random variables. If we condition on any
outcome of $\sigma$, then $X_{G,k}$ can be viewed as a (non-homogeneous)
degree-$r$ polynomial $f_{\sigma}\left(\g\right)$ of $\g$, to which
we can apply standard anticoncentration inequalities. For a non-homogeneous
polynomial, anticoncentration inequalities tend to give bounds depending
on the nonzero coefficients of maximum degree, so the remaining difficulty
lies in studying the nonzero maximium-degree coefficients in $f_{\sigma}$
(which depend on $\sigma$).

It turns out that these coefficients have a combinatorial interpretation:
for example, if $G$ is a graph (as in \cref{thm:almost}), then the
nonzero degree-2 coefficients in $f_{\sigma}$ in some sense arise
from 4-tuples of vertices $\left(x,x',y,y'\right)$ such that
\[
a_{xy}-a_{xy'}-a_{x'y}+a_{x'y'}\ne0.
\]
In the special case where $\minell=\Omega\left(k^{2}\right)$, we
can use a simple Ramsey-type argument to show that $G$ has $\Omega\left(k^{4}\right)$
such tuples (this turns out to follow from the fact that 2-edge-coloured complete graphs with many blue and red edges have many alternating paths of length 3). This allows us to show that $f_{\sigma}$ is likely
to have many nonzero coefficients, allowing us to deduce \cref{thm:almost}
via a quadratic anticoncentration inequality. For the general case
of \cref{thm:almost} we need to use a more refined anticoncentration
inequality due to Meka, Nguyen and Vu \cite{MNV16} for which it suffices
to find a large matching in an auxiliary graph defined in terms of
the nonzero degree-2 coefficients. In the proof of \cref{thm:almost}
this auxiliary graph will be a random graph depending on $\sigma$.
We will carefully define a greedy procedure that finds the required
matching with high probability.

The situation for hypergraphs is much less straightforward than for
graphs, which is why \cref{thm:hypergraph-sqrt} is so much weaker
than \cref{thm:almost}. In contrast to the graph case, even in the
setting of \cref{thm:hypergraph-sqrt} where $G$ is a 3-uniform hypergraph
with $\minell=\Omega\left(k^{3}\right)$, it may happen that $f_{\sigma}\left(\g\right)$
has very few degree-3 coefficients, which prevents us from directly
applying an anticoncentration inequality. To overcome this, we prove
a variant of the Meka--Nguyen--Vu anticoncentration
inequality which (under certain specific circumstances) allows us
to take coefficients of non-maximum degree into account. We then prove
an approximate classification of 3-uniform hypergraphs $G$ such that
$f\left(\g\right)$ has few nonzero degree-3 coefficients (using a theorem of Fox and Sudakov on ``unavoidable patterns'' and the induced
hypergraph removal lemma), and we study the lower-degree coefficients
of $f$ for all such $G$. This unfortunately involves some slightly
complicated case analysis.

\subsection{Notation}

We use standard asymptotic notation throughout. For functions $f=f\left(n\right)$
and $g=g\left(n\right)$ we write $f=O\left(g\right)$ to mean there
is a constant $C$ such that $\left|f\right|\le C\left|g\right|$,
we write $f=\Omega\left(g\right)$ to mean there is a constant $c>0$
such that $f\ge c\left|g\right|$ for sufficiently large $n$, we
write $f=\Theta\left(g\right)$ to mean that $f=O\left(g\right)$
and $f=\Omega\left(g\right)$, and we write $f=o\left(g\right)$ or
$g=\omega\left(f\right)$ to mean that $f/g\to0$ as $n\to\infty$.
All asymptotics are as $n\to\infty$ unless specified otherwise (specifically,
notation of the form $o_{k}\left(1\right)$ indicates that asymptotics
are as $k\to\infty$).

For a positive integer $n$ , we write $\range n$ to mean the set
$\left\{ 1,\dots,n\right\} $. For a set $S$ we write $\binom S k$ for the collection of all subsets of $S$ of size exactly $k$, and we write $\binom S{\le k}$ for the collection of all subsets of size at most $k$. Less standardly, for a zero-one sequence $\boldsymbol{x}=\left(x_{1},\dots,x_{n}\right)$,
we write $\left|\boldsymbol{x}\right|$ for the number of ones in
$\boldsymbol{x}$. For any sequence $\boldsymbol{x}=\left(x_{1},\dots,x_{n}\right)$,
and any $I\subseteq\range n$, we write $\boldsymbol{x}^{I}$ to mean
the monomial $\prod_{i\in I}x_{i}$.

We also use standard (hyper)graph theoretic notation. In particular,
for a hypergraph $G$ on the vertex set $V$ and a set of vertices
$S\subseteq V$, let $\deg\left(S\right)$ be the number of edges
%of $G$ that include $S$. 
$e\in E(G)$ such that $S\subseteq e$. Also, we write ``$r$-graph'' as shorthand for ``$r$-uniform hypergraph''.

\section{\label{sec:tools}Probabilistic Tools}

For any $0\le k\le n$ let $\hyp\left(n,k\right)$ be the uniform
distribution on sequences $\boldsymbol{x}\in\left\{ 0,1\right\} ^{n}$
with $\left|\boldsymbol{x}\right|=k$. This is precisely the distribution
of $\x$ as outlined in \cref{sec:outline}. It is sometimes informally
known as the uniform distribution ``on the slice'', and is also
known as the limiting distribution of the Bernoulli--Laplace
model of diffusion. For an $r$-graph $G$ on the vertex
set $\range n$, note that the random variable $X_{G,k}$ can be interpreted
as a homogeneous degree-$r$ polynomial of $\x\in\hyp\left(n,k\right)$:
if $a_{S}=\one_{S\in E\left(G\right)}$ indicates the presence of
an edge $S\subseteq\range n$ in $G$, then we can write
\[
X_{G,k}=\sum_{S}a_{S}\x^{S}.
\]
In this section we collect a number of general results about $\hyp\left(n,k\right)$
which will be useful for our proofs. Some of these are known, and
some are new.

\subsection{Concentration}

There are a number of well-known concentration results which can be
applied to functions of $\hyp\left(n,k\right)$-distributed random
zero-one sequences (see for example \cite[Corollary~2.2]{GIKM17}).
In the proof of \cref{thm:almost} we will use the following ``non-uniform''
concentration inequality, which we were not able to find elsewhere
in the literature.
\begin{lem}
\label{lem:concentration}Consider $f:\left\{ 0,1\right\} ^{n}\to\RR$
such that
\[
\left|f\left(x_{1},\dots,x_{i-1},0,x_{i+1},\dots,x_{n}\right)-f\left(x_{1},\dots,x_{i-1},1,x_{i+1},\dots,x_{n}\right)\right|\le c_{i}
\]
for all $\boldsymbol{x}\in\left\{ 0,1\right\} ^{n}$ and all $i\in\range n$.
Let %$\x\in\hyp\left(n,q\right)$
$\x\in\hyp\left(n,k\right)$. Then 
\[
\Pr\left(f\left(\x\right)-\E f\left(\x\right)\ge t\right)\le\exp\left(-\frac{t^{2}}{8\sum_{i=1}^{n}c_{i}^{2}}\right).
\]
\end{lem}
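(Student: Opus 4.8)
The plan is to deduce this from a standard martingale (Azuma--Hoeffding) argument, but carried out on the slice rather than on the full hypercube, where the subtlety is that the coordinates of $\x$ are not independent. The natural device is the well-known coupling between $\hyp(n,k)$ and a uniformly random permutation: let $\sigma$ be a uniformly random permutation of $\range n$, and let $\x = \x(\sigma)$ be the indicator vector of the set $\{\sigma(1),\dots,\sigma(k)\}$, so that $\x \in \hyp(n,k)$. We then expose $\sigma$ one value at a time, setting up the Doob martingale $Z_j = \E\bigl[f(\x) \mid \sigma(1),\dots,\sigma(j)\bigr]$ for $j=0,1,\dots,n$, with $Z_0 = \E f(\x)$ and $Z_n = f(\x)$. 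The goal is to bound the martingale differences $|Z_j - Z_{j-1}|$ in terms of the $c_i$ and then apply Azuma's inequality.

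The key step is the bounded-difference estimate. Exposing $\sigma(j)$ amounts to deciding which not-yet-placed element goes into slot $j$; swapping the value of $\sigma(j)$ from one candidate to another corresponds, after re-coupling the remaining randomness, to swapping a pair of coordinates of $\x$ of the form "one zero becomes a one and one one becomes a zero" (or leaving $\x$ unchanged, if both candidates land on the same side of the cut, i.e.\ both in the first $k$ slots or both outside). A single such swap of coordinates $i$ and $i'$ changes $f$ by at most $c_i + c_{i'}$, by applying the hypothesis twice through an intermediate configuration. A short exchangeability/averaging argument (comparing $Z_j$ for two values of $\sigma(j)$ via a common coupling of $\sigma(j+1),\dots,\sigma(n)$) then gives $|Z_j - Z_{j-1}| \le \max_{i,i'}(c_i + c_{i'})$ for the relevant index range, but this crude bound is too lossy: it would produce $\sum_j (2\max_i c_i)^2$ rather than $\sum_i c_i^2$. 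To get the stated constant $8\sum_i c_i^2$ one must be more careful and track which coordinate is "gained" at step $j$: essentially, the $j$-th difference is controlled by $c_{\sigma(j)}$ plus a term coming from whichever coordinate is displaced, and after summing the squares over all $j$ each $c_i^2$ is charged a bounded number of times (the $i$-th coordinate can be the one placed at some slot, and can be the one displaced at some other slot). Collecting these contributions and using $(a+b)^2 \le 2a^2 + 2b^2$ yields a bound of the form $\mathrm{const}\cdot\sum_i c_i^2$ on $\sum_j (Z_j - Z_{j-1})^2$-type quantities, and Azuma's inequality then gives the exponential tail with the claimed constant $8$ after optimizing.

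The main obstacle is precisely this last bookkeeping: naively each martingale difference is bounded by a sum of two of the $c_i$'s, and one must argue that across the whole sequence of exposures each individual $c_i^2$ is only used boundedly often, so that the total variance proxy is $O(\sum_i c_i^2)$ and not $O(n \max_i c_i^2)$. An alternative that avoids delicate charging is to instead expose the $n$ coordinates of $\x$ in a fixed order but condition cleverly — however, because of the hard constraint $|\x| = k$ the Doob martingale on coordinates does not have a clean bounded-difference structure, so the permutation coupling seems to be the right vehicle. I would therefore set up the martingale as above, prove the per-step bound $|Z_j - Z_{j-1}| \le c_{\text{(placed)}} + c_{\text{(displaced)}}$ carefully via an explicit coupling, and then finish with a counting argument showing $\sum_{j} (|Z_j-Z_{j-1}|)^2 \le 8\sum_{i=1}^n c_i^2$, followed by Azuma's inequality.
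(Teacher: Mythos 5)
Your proposed route---exposing a random permutation $\sigma$ and running a Doob martingale on $\sigma(1),\dots,\sigma(n)$---has a genuine gap in exactly the place you flag. The Azuma--Hoeffding inequality you want to invoke requires a \emph{deterministic} per-step bound $|Z_j-Z_{j-1}|\le d_j$ and then gives $\exp(-t^2/(2\sum_j d_j^2))$. Your charging argument instead aims at a pathwise bound of the form $\sum_j(Z_j-Z_{j-1})^2\le 8\sum_i c_i^2$, which is a statement about a random quantity; standard Azuma does not accept that as input (one would need a Freedman-type inequality or a self-bounding argument, with a different conclusion). Worse, the bookkeeping itself is not well-posed: at step $j$ the ``displaced'' coordinate is not a function of $\sigma(1),\dots,\sigma(j)$, so you cannot pathwise assign each increment a specific pair $(c_a, c_b)$. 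The only deterministic bound available in this filtration is $|Z_j-Z_{j-1}|\le 2\max_i c_i$, which loses the $\sum_i c_i^2$.

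You also explicitly reject the alternative of exposing the coordinates $\xi_1,\dots,\xi_n$ directly, on the grounds that the constraint $|\x|=k$ spoils the bounded-difference structure. This is the step the paper disagrees with, and it is the crux. The paper first sorts so that $c_1\ge c_2\ge\dots\ge c_n$, and then uses the Doob martingale $Z_i=\E[f(\x)\mid\xi_1,\dots,\xi_i]$. Conditioned on $\xi_1,\dots,\xi_{i-1}$, the two conditional laws (given $\xi_i=0$ versus $\xi_i=1$) can be coupled so that the resulting vectors differ in exactly two coordinates: coordinate $i$ and one coordinate $j>i$. Because of the sorting, $c_j\le c_i$, so the difference of $f$ on the coupled pair is at most $c_i+c_j\le 2c_i$, and hence $|Z_i-Z_{i-1}|\le 2c_i$ is a \emph{deterministic} bound. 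Plugging $d_i=2c_i$ into Azuma gives $\exp(-t^2/(2\cdot 4\sum_i c_i^2))=\exp(-t^2/(8\sum_i c_i^2))$. So the coordinate-exposure martingale works after all; the missing idea in your write-up is the preliminary monotone reordering of the $c_i$, which is what makes the displaced coordinate always the ``cheaper'' of the two and turns the bound deterministic.
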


\begin{proof}
We may assume without loss of generality that $c_{1}\ge\dots\ge c_{n}$.
Consider the Doob martingale $Z_{i}=\E\left[f\left(\x\right)\middle|\xi_{1},\dots\xi_{i}\right]$,
so $Z_{0}=\E f\left(\x\right)$ and $Z_{n}=Z_{n-1}=f\left(\x\right)$.
Let $\L\left(x_{1},\dots,x_{i}\right)$ be the conditional distribution
of $\x$ given $\xi_{1}=x_{1},\dots,\xi_{i}=x_{i}$.

We want to show that
\[
\left|\E\left[f\left(\L\left(x_{1},\dots,x_{i-1},0\right)\right)\right]-\E\left[f\left(\L\left(x_{1},\dots,x_{i-1},1\right)\right)\right]\right|\le2c_{i}
\]
for all feasible $x_{1},\dots,x_{i-1}\in\left\{ 0,1\right\} $; this
will imply that $\left|Z_{i}-Z_{i-1}\right|$ is uniformly bounded
by $2c_{i}$, so the desired result will follow from the Azuma--Hoeffding
inequality (see for example \cite[Theorem~2.25]{JLR00}).

If $\x$ is distributed as $\L\left(x_{1},\dots,x_{i-1},0\right)$,
we can change $\xi_{i}$ to 1 and then randomly choose one of the
ones among $\xi_{i+1},\dots,\xi_{n}$ and change it to 0; we thereby
obtain the distribution $\L\left(x_{1},\dots,x_{i-1},1\right)$. This
provides a coupling between $\L\left(x_{1},\dots,x_{i-1},0\right)$
and $\L\left(x_{1},\dots,x_{i-1},1\right)$ that differs in only two
coordinates $i$ and $j>i$, and since $c_{j}\le c_{i}$ this implies
the required bound.
\end{proof}

\subsection{\label{subsec:hypercontractivity}``Weak'' anticoncentration via
hypercontractivity}

The key ingredient in our proof of \cref{thm:1-eps} is the following
``weak'' anticoncentration inequality.
\begin{lem}
\label{lem:hyp-anticoncentration}Let $f$ be an $n$-variable polynomial
of degree $d$, and let $\x\in\hyp\left(n,n/2\right)$. Suppose that
the random variable $f\left(\x\right)$ is not constant. Then, for
any $\ell\in\RR$, we have
\[
\Pr\left(f\left(\x\right)=\ell\right)\le1-e^{-O\left(d\right)}.
\]
\end{lem}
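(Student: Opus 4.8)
The plan is to reduce the statement to a hypercontractive inequality on the slice $\hyp(n,n/2)$. The key point is that if $f(\x)$ takes some value $\ell$ with probability $p$ very close to $1$, then the indicator $g = \one_{f(\x)=\ell}$ is a Boolean function which is itself a polynomial of degree at most $O(d)$ (since $g$ can be written as a polynomial in $f$ after shifting and rescaling; more precisely, if $f(\x)$ takes values $v_1,\dots,v_m$ then $g$ agrees on the slice with $\prod_{v_j\ne\ell}(f-v_j)/(\ell-v_j)$, but we only need that the real-valued function $\one_{f(\x)=\ell}$ lies in the span of polynomials of degree at most, say, $2^{O(d)}$ — in fact it suffices to work with the level decomposition of $g$ truncated appropriately). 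I would then argue that a Boolean function on the slice which is highly biased toward one value and which has bounded degree (equivalently, whose Fourier/Johnson-scheme expansion is concentrated on low levels) cannot be non-constant: its variance is tiny, but a hypercontractive inequality forces the variance of a low-degree non-constant Boolean function to be bounded below by a constant depending only on the degree.

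Concretely, the main step is: let $g:\{0,1\}^n\to\{0,1\}$ be a function that agrees on the slice $\hyp(n,n/2)$ with a polynomial of degree $D = 2^{O(d)}$, and suppose $\Var(g) = p(1-p) > 0$ where $p = \Pr(g=1)$. Apply the hypercontractive inequality on the slice (the version due to Lee and Yau \cite{LY98}, or the more recent formulations such as that of Filmus, O'Donnell, Wu, etc., but Lee--Yau suffices): for the homogeneous pieces $g^{=i}$ of the Johnson-scheme decomposition of $g$, we get a bound of the form $\|g^{\ge 1}\|_4 \le e^{O(D)}\|g^{\ge 1}\|_2$. Combining this with the fact that $\|g\|_4^4 = \E g = p$ (since $g$ is $0/1$-valued) and $\|g\|_2^2 = p$, one extracts that $\|g^{=0}\|_2^2 = p^2$ cannot be too close to $\|g\|_2^2 = p$ unless $p$ is bounded away from $1$; quantitatively, $\Var(g) = \|g^{\ge 1}\|_2^2 \ge e^{-O(D)}\|g^{\ge 1}\|_4^2 \ge e^{-O(D)}(\|g\|_4^2 - \|g^{=0}\|_4^2)$-type manipulations yield $p(1-p)\ge e^{-O(D)}$, hence $1-p \ge e^{-O(D)}/p \ge e^{-O(D)}$. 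This is exactly the claimed bound $\Pr(f(\x)=\ell)=p \le 1 - e^{-O(d)}$ once we track that $D = 2^{O(d)}$, which only affects the implied constant inside the $O(d)$ in the exponent.

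The step I expect to be the main obstacle is controlling the degree of the indicator $\one_{f=\ell}$ correctly and cleanly: $f(\x)$ can a priori take exponentially many distinct values, so the naive Lagrange-interpolation bound on the degree of $\one_{f=\ell}$ is far too large. The fix is that we do not need the exact indicator — it is enough to work with $h = (f-\ell)^2$ (or a suitable normalization), which has degree $2d$, is non-negative, equals $0$ exactly when $f=\ell$, and is bounded; then apply a hypercontractive/anticoncentration argument to $h$ directly rather than to a Boolean function. One clean route: let $M = \max|f(\x)-\ell|^2$ over the slice and set $h = 1 - (f-\ell)^2/M$, a degree-$2d$ polynomial with values in $[0,1]$, equal to $1$ precisely on the event $f=\ell$ and non-constant since $f(\x)$ is. Then $\E h \le p + (1-p)\cdot(1 - c)$ for an appropriate $c>0$ coming from the minimal nonzero gap — but since we only want a bound, it is cleaner still to observe $\E[h] \le p \cdot 1 + (1-p)\cdot 1 = $ doesn't immediately help; instead bound $\Var(h)$ below using that $h$ is non-constant and $2$-norm/$4$-norm hypercontractivity for degree $2d$, while bounding $\Var(h)$ above by $\E[h(1-h)] \le (1-p) \cdot 1$ plus a term, forcing $1-p \ge e^{-O(d)}$. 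Getting this chain of inequalities to close with the right dependence on $d$ — in particular ensuring the $4$-to-$2$ norm ratio is $e^{O(d)}$ and not worse — is the delicate bookkeeping, and is where I would invoke the precise form of the Lee--Yau hypercontractive estimate on $\hyp(n,n/2)$.
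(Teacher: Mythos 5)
You correctly identify hypercontractivity on the slice and a fourth-moment estimate as the right tools, which is the framework the paper uses. However, the proposed argument has a genuine gap, and the paper's proof is both shorter and avoids the detours you attempt.

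The paper's proof is a two-line combination of two of its own lemmas: \cref{cor:hypercontractive-fourth-moment} gives $\E\left(f(\x)-\E f(\x)\right)^4 \le e^{O(d)}\bigl(\E(f(\x)-\E f(\x))^2\bigr)^2$, and \cref{lem:fourth-moment-anticoncentration} (quoted from \cite{AGK}) says that any non-constant real random variable $Z$ with $\E Z=0$ and $\E Z^4\le b(\E Z^2)^2$ satisfies $\Pr(Z\ne\ell)\ge 1/(2^{4/3}b)$ for every $\ell$. Applying this to $Z=f(\x)-\E f(\x)$, which has degree $d$, immediately gives $\Pr(f(\x)=\ell)\le 1-e^{-O(d)}$. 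There is no need to pass to the indicator $\one_{f=\ell}$ or to any auxiliary polynomial $h$.

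The gap in your argument is at the step you call ``delicate bookkeeping'': you propose to bound $\Var(h)$ from below using ``that $h$ is non-constant and $2$-norm/$4$-norm hypercontractivity for degree $2d$.'' But hypercontractivity is a relative statement, controlling $\|h-\E h\|_4$ by $\|h-\E h\|_2$; it gives no absolute lower bound on $\Var(h)$, and indeed a non-constant degree-$2d$ polynomial on the slice can have arbitrarily small variance (e.g.\ $\varepsilon\cdot(\text{anything non-constant of degree }\le 2d)$). Pairing this nonexistent lower bound with the correct upper bound $\Var(h)\le 1-p$ therefore cannot close the argument. The Boolean-$g$ computation you sketch first \emph{would} close it (there $\E(g-p)^4=p(1-p)[(1-p)^3+p^3]$ lets you divide out $p(1-p)$), but as you acknowledge you have no control over the degree of $\one_{f=\ell}$; and once you replace $g$ by the non-Boolean $h=1-(f-\ell)^2/M$, that identity and the entire divide-out-$\Var$ trick evaporate. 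What is actually needed is an anticoncentration statement that follows from fourth-moment control alone, for unbounded non-Boolean $Z$ and at an arbitrary point $\ell$ (not just at $\E Z$); that is exactly \cref{lem:fourth-moment-anticoncentration}, which you would need to state and prove (or cite). Once you have it, you should apply it directly to $f-\E f$ of degree $d$ rather than to $h$ of degree $2d$: the detour through the indicator and through $h$ is unnecessary.
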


To prove \cref{lem:hyp-anticoncentration} it will suffice to control
the fourth moment of low-degree polynomials of $\hyp\left(n,k\right)$-distributed
random vectors. This is due to the fact (also observed in \cite{AHKT})
that if a random variable has fourth moment comparable to its variance
squared then it is reasonably likely to have fluctuations comparable
to its standard deviation. The following lemma is a corollary of \cite[Lemma~3.2~(i)]{AGK}.
\begin{lem}
\label{lem:fourth-moment-anticoncentration}Let $Z$ be a non-constant real random
variable satisfying $\E Z=0$ and $\E Z^{4}\le b\left(\E Z^{2}\right)^{2}$. Then
for any $\ell\in\RR$, $\Pr\left(Z\ne\ell\right)\ge1/(2^{4/3}b)$.
\end{lem}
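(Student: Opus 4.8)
The plan is to prove the lemma by a Paley--Zygmund/Hölder argument applied to the \emph{negative part} of $Z$. Write $Z^{-}=\max(-Z,0)$. Since $Z$ is non-constant with $\E Z=0$, we have $0<\E Z^{2}<\infty$, hence $0<\E Z^{4}<\infty$, so all quantities below are finite and all denominators positive. Fix $\ell\in\RR$. Replacing $Z$ by $-Z$ and $\ell$ by $-\ell$ if necessary (this preserves the hypotheses and the value $\Pr(Z\ne\ell)$), I may assume $\ell\ge 0$. Then $\{Z<0\}\subseteq\{Z\ne\ell\}$, so it suffices to bound $\Pr(Z<0)=\Pr(Z^{-}>0)$ from below.

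For this I would combine three elementary estimates. First, Hölder's inequality with exponents $4$ and $4/3$ gives
\[
\E Z^{-}=\E\left[Z^{-}\one_{Z^{-}>0}\right]\le\left(\E (Z^{-})^{4}\right)^{1/4}\Pr(Z^{-}>0)^{3/4}\le\left(\E Z^{4}\right)^{1/4}\Pr(Z<0)^{3/4},
\]
i.e.\ $\Pr(Z<0)\ge(\E Z^{-})^{4/3}/(\E Z^{4})^{1/3}$. Second, since $\E Z=0$ we have $\E Z^{-}=\E Z^{+}=\tfrac12\E|Z|$. Third, Hölder's inequality with exponents $3/2$ and $3$, applied to $|Z|^{2}=|Z|^{2/3}\cdot|Z|^{4/3}$, gives $\E Z^{2}\le(\E|Z|)^{2/3}(\E Z^{4})^{1/3}$, hence $\E|Z|\ge(\E Z^{2})^{3/2}/(\E Z^{4})^{1/2}$. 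Substituting the third and second estimates into the first yields
\[
\Pr(Z<0)\ge\frac{1}{2^{4/3}}\cdot\frac{(\E|Z|)^{4/3}}{(\E Z^{4})^{1/3}}\ge\frac{1}{2^{4/3}}\cdot\frac{(\E Z^{2})^{2}}{\E Z^{4}}\ge\frac{1}{2^{4/3}b},
\]
which is the claim.

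The one point requiring care --- and where I expect the main obstacle to lie --- is obtaining the sharp constant $2^{4/3}$ while still dealing with an arbitrary point $\ell$. Applying Paley--Zygmund directly to $W=Z^{2}$ only shows $\Pr\left(|Z|\ge\theta^{1/2}(\E Z^{2})^{1/2}\right)\ge(1-\theta)^{2}/b$, which immediately handles small $\ell$ (those with $\ell^{2}<\theta\,\E Z^{2}$) but gives nothing useful when $\ell$ is large compared with the standard deviation; there one must use the mean-zero hypothesis to see that $Z$ necessarily puts appreciable mass strictly below $0$, and the Hölder chain above does exactly this while keeping the constant tight (a Cauchy--Schwarz version of the first step would only give $1/(4b)$). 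I would also note that the non-constant hypothesis is used only to ensure $\E Z^{2}>0$, so the inequalities are not vacuous; alternatively, the statement can be quoted essentially verbatim from \cite[Lemma~3.2~(i)]{AGK}.
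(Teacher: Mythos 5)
Your proof is correct. The paper itself does not prove this lemma; it simply cites it as a corollary of \cite[Lemma~3.2~(i)]{AGK}, so there is no in-paper argument to compare against line by line. That said, the chain you use --- reduce to $\ell\ge 0$ by symmetry so that $\{Z<0\}\subseteq\{Z\ne\ell\}$, then bound $\Pr(Z<0)$ from below via H\"older with exponents $(4,4/3)$ applied to $Z^-\one_{Z^->0}$, the identity $\E Z^-=\tfrac12\E|Z|$, and H\"older with exponents $(3/2,3)$ to get $\E|Z|\ge(\E Z^2)^{3/2}/(\E Z^4)^{1/2}$ --- is precisely the standard argument behind the Alon--Gutin--Krivelevich lemma, and all your inequalities check out, including the final substitution giving $\Pr(Z<0)\ge(\E Z^2)^2/(2^{4/3}\E Z^4)\ge 1/(2^{4/3}b)$. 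Your side remarks are also accurate: the non-constancy hypothesis is used only to guarantee $\E Z^2>0$; the Cauchy--Schwarz variant of the first step would indeed only give $1/(4b)$, which is weaker since $4>2^{4/3}$; and Paley--Zygmund applied to $Z^2$ alone cannot handle large $\ell$, which is exactly why the mean-zero reduction to $\Pr(Z<0)$ is the right move. In short, you have correctly filled in the details that the paper delegates to the reference.
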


Now, to bound the fourth moment of a low-degree polynomial of a $\hyp\left(n,k\right)$-distributed
random vector, we will want a hypercontractive inequality that can
be applied to $\hyp\left(n,k\right)$ analogously to standard applications
of the Bonami--Beckner hypercontractive inequality (see
for example \cite{Odo14}) in discrete Fourier analysis. We will use
a hypercontractive inequality for (the Markov semigroup of) Bernoulli-Laplace
diffusion, which can be deduced (see for example \cite{DS96}) from
a log-Sobolev inequality proved by Lee and Yau \cite{LY98}. We will
present this hypercontractive inequality in a convenient self-contained
form due to Filmus \cite{Fil16}.

First, we need the notion of a \emph{harmonic} polynomial, originally
introduced by Dunkl \cite{Dun76,Dun78}: a polynomial %$f$ 
$g$ in the
variables $x_{1},\dots,x_{n}$ is said to be harmonic if 
\[
\sum_{i=1}^{n}\frac{\partial g}{\partial x_{i}}=0.
\]

It turns out that every random variable of the form $f\left(\x\right)$,
for $\x\in\hyp\left(n,k\right)$, can be represented in the form $g\left(\x\right)$,
for $g$ a harmonic multilinear polynomial. We will moreover need
the fact that if $f$ is a polynomial of degree $d$, then $g$ also
has degree at most $d$. The following lemma effectively appears as
\cite[Lemma~3.17]{FM16}.
\begin{lem}
\label{lem:harmonic}Let $f$ be an $n$-variable polynomial of degree
$d$, and let $\x\in\hyp\left(n,k\right)$. Then there is a harmonic
multilinear polynomial $g$ such that $f\left(\x\right)=g\left(\x\right)$;
this polynomial has degree at most $\min\left\{ d,k,n-k\right\} $.
\end{lem}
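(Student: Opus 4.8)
The plan is to build the harmonic representative of $f(\x)$ explicitly by projecting onto a known basis of harmonic multilinear polynomials, and then to track degrees through the projection. First I would recall Dunkl's decomposition theory: the space of functions on $\hyp(n,k)$ (equivalently, of multilinear polynomials on $\{0,1\}^n$ restricted to the slice $|\boldsymbol{x}|=k$) admits a direct-sum decomposition into $\mathrm{GL}$- (or rather $S_n$-) irreducible pieces, and one can choose a basis consisting of harmonic multilinear polynomials, one family of degree $j$ for each $0\le j\le\min\{k,n-k\}$. Concretely, Filmus's explicit formulas give, for each $j$, a spanning set of degree-$j$ harmonic multilinear polynomials whose restrictions to the slice span the ``level-$j$'' subspace; together these span all functions on the slice. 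So any $f$ restricted to the slice is a linear combination of harmonic multilinear polynomials of degrees $0,1,\dots,\min\{k,n-k\}$; taking $g$ to be that linear combination (itself harmonic, since harmonicity is linear) gives $f(\x)=g(\x)$ with $\deg g\le\min\{k,n-k\}$.

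The remaining point is the bound $\deg g\le d$ when $\deg f\le d$. Here I would argue by a filtration/degree-grading argument: the harmonic decomposition refines the degree filtration, meaning that the span of harmonic multilinear polynomials of degree $\le j$, after restriction to the slice, equals the span of \emph{all} multilinear polynomials of degree $\le j$ after restriction to the slice. One inclusion is obvious; for the other, the dimension count (the number of standard Young tableaux / hook-length computation underlying the $S_n$-decomposition) shows the two spaces have the same dimension, hence are equal. Since $f$ has degree $\le d$, its restriction lies in the degree-$\le d$ span, which equals the span of harmonic multilinear polynomials of degree $\le\min\{d,k,n-k\}$; choosing $g$ in this span and harmonic gives the claimed degree bound. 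Combined with the first paragraph, $\deg g\le\min\{d,k,n-k\}$, as required. (One also needs uniqueness of the harmonic multilinear representative to make ``$g$'' well-defined, but for the statement only existence is needed, so I would not belabour this.)

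The main obstacle is making the ``harmonic decomposition refines the degree filtration'' step clean: it is really a statement about the representation theory of the Johnson scheme / symmetric group, and the honest way to see the matching dimensions is through the branching rule or an explicit combinatorial basis. Rather than reprove this, I would simply invoke the cited sources --- the existence and degree properties of the harmonic multilinear representation are exactly what is packaged in \cite[Lemma~3.17]{FM16} (via Filmus \cite{Fil16}) --- and limit the proof to assembling these facts. So in practice the proof is short: cite the harmonic basis of \cite{Fil16}, note harmonicity is preserved under linear combinations, and invoke the degree-filtration compatibility to get $\deg g\le\min\{d,k,n-k\}$.
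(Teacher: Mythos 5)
Your proposal is correct and matches the paper's treatment: the paper gives no self-contained proof of this lemma, simply observing that it ``effectively appears as \cite[Lemma~3.17]{FM16}.'' Your outline of the ideas underlying that citation --- a harmonic multilinear basis for functions on the slice, with degree-$j$ levels for $0\le j\le\min\{k,n-k\}$, together with a dimension count showing the harmonic filtration agrees with the degree filtration --- is the right sketch of what that cited lemma packages, and you correctly conclude by deferring to the source rather than reproving it.
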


Now, for a harmonic multilinear polynomial $f$, let $f^{=d}$ be
the $d$-th homogeneous part of $f$ consisting of terms with degree
exactly $d$. Each of these parts is ``orthogonal'' in the sense
that for $\x\in\hyp\left(n,k\right)$ and $d\ne d'$ we have $\E\left[f^{=d}\left(\x\right)f^{=d'}\left(\x\right)\right]=0$
(see \cite[Theorem~3.1]{Fil16}). For each $t$, let $H_{t}$ be the\emph{
}operator on $n$-variable harmonic multilinear polynomials defined
as follows. For a harmonic multilinear polynomial $f$, let
\[
H_{t}f=\sum_{i=0}^{n}\exp\left(-t\frac{2i\left(n+1-i\right)}{n\left(n-1\right)}\right)f^{=i}.
\]
We are now in a position to state the promised hypercontractive inequality,
which essentially appears as \cite[Proposition~6.2]{Fil16}.
\begin{prop}
\label{prop:hypercontractive}Let $\x\in\hyp\left(n,pn\right)$ and
let
\[
\rho=-\frac{2}{n\log2\log\left(p\left(1-p\right)\right)}.
\]
Then for any $t\ge0$ and $q\ge2$ satisfying $q-1\le e^{2\rho t}$,
and any $n$-variable harmonic multilinear polynomial %$f$
$g$, we have
\[
\E\left[\left|H_{t}g\left(\x\right)\right|^{q}\right]^{2/q}\le\E g\left(\x\right)^{2}.
\]
\end{prop}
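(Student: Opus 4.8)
The plan is to recognise the claimed inequality as a special case of the general equivalence, due to Gross, between hypercontractivity of a reversible Markov semigroup and a logarithmic Sobolev inequality (LSI) for its generator, applied to the Bernoulli--Laplace chain on the slice. Concretely, I would consider the continuous-time Markov chain on $\hyp(n,pn)$ whose moves exchange a uniformly random $1$-coordinate with a uniformly random $0$-coordinate, and let $L$ be its generator, normalised so that every degree-$i$ harmonic multilinear polynomial is an eigenfunction of $L$ with eigenvalue $\tfrac{2i(n+1-i)}{n(n-1)}$. That $L$ is simultaneously diagonalised by the spaces of harmonic multilinear polynomials of each fixed degree, with eigenvalue a multiple of $i(n+1-i)$, is exactly the content of the orthogonality relations quoted above from \cite{Fil16}; fixing the multiplicative constant is what pins down this normalisation. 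With it, the heat semigroup $e^{-tL}$ is precisely the operator $H_t$ of the statement, and since the homogeneous parts $g^{=i}$ are mutually orthogonal, $H_t$ is self-adjoint and positive; the inequality to be proved is just $\|H_t g\|_q\le\|g\|_2$, with norms taken in $L^q$ and $L^2$ of the uniform measure on $\hyp(n,pn)$.

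Next I would invoke Gross's theorem (see e.g.\ \cite{DS96}): if $e^{-tL}$ satisfies an LSI with constant $\alpha$, i.e.\ $\mathrm{Ent}[g(\x)^2]\le \alpha^{-1}\,\E[g(\x)\,Lg(\x)]$ for all $g$, then $\|e^{-tL}g\|_q\le\|g\|_2$ whenever $q\ge 2$ and $q-1\le e^{2\alpha t}$. Hence it suffices to verify that $L$, with the above normalisation, admits an LSI with constant at least $\rho$. Since the eigenvalues of $L$ are those of the standard rate-$1$ Bernoulli--Laplace generator multiplied by a factor of order $p(1-p)$, this is equivalent to showing that the rate-$1$ Bernoulli--Laplace LSI constant is at least of order $\rho/(p(1-p))$, which is precisely the content of the sharp logarithmic Sobolev inequality of Lee and Yau \cite{LY98}. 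Feeding their explicit bound through this rescaling, together with the bookkeeping carried out in \cite{DS96,Fil16}, yields the constant $\rho=-\tfrac{2}{n\log 2\,\log(p(1-p))}$; note $-\log(p(1-p))>0$ since $p(1-p)\le 1/4$, so $\rho>0$, and the factor $\log 2$ arises from the logarithm conventions of \cite{Fil16}.

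As the referenced results already phrase matters in nearly this form --- the statement being essentially \cite[Proposition~6.2]{Fil16} --- the only genuine task, and the step I expect to be fiddly, is the normalisation matching: reconciling the time/space scaling implicit in the definition of $H_t$ (the factor of order $p(1-p)$ above, together with the $n(n-1)$ versus $n^2$ discrepancy) with the scaling under which Lee and Yau state their constant, and confirming that the resulting value of $\rho$ is a valid lower bound for the LSI constant uniformly over all admissible $p$ --- both for $p$ bounded away from $\{0,1\}$ and for $p$ near the endpoints, where $p(1-p)\to 0$. No new probabilistic idea is needed beyond quoting Lee--Yau and Gross; the content lies entirely in getting the constants exactly right.
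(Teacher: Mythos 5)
The paper does not actually prove \cref{prop:hypercontractive}; it states the result as ``essentially'' \cite[Proposition~6.2]{Fil16} and sketches the dependency chain in the preceding paragraph (Lee--Yau log-Sobolev inequality \cite{LY98} $\Rightarrow$ hypercontractivity via Gross's theorem, as packaged in \cite{DS96} and reformulated for the slice by Filmus \cite{Fil16}). Your proposal reconstructs exactly that chain: you identify $H_t$ as the heat semigroup $e^{-tL}$ of the Bernoulli--Laplace generator, invoke Gross's equivalence between an LSI with constant $\alpha$ and hypercontractivity with parameter window $q-1\le e^{2\alpha t}$, and then defer to Lee--Yau and the bookkeeping in \cite{DS96,Fil16} to match the normalisation and produce the claimed $\rho$. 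This is the same approach the paper alludes to, and your self-assessment is accurate: the only nontrivial content left is precisely the normalisation-matching that you flag as fiddly and ultimately delegate to the cited sources. So the proposal is correct and consistent with the paper's treatment, but it is a roadmap rather than a self-contained proof --- which is also all the paper provides.

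One small caution for if you ever want to flesh this out: the orthogonality relations you quote from \cite{Fil16} (that $\E[f^{=d}(\x)f^{=d'}(\x)]=0$ for $d\ne d'$) establish that the degree decomposition is orthogonal, but by themselves they do not establish that each homogeneous harmonic part is an eigenfunction of the Bernoulli--Laplace generator with eigenvalue proportional to $i(n+1-i)$. That spectral fact is a separate (also classical) ingredient, and you would need to cite it explicitly --- it is in \cite{Fil16} and goes back to Dunkl's work \cite{Dun76,Dun78} --- before the identification $H_t=e^{-tL}$ is justified.
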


The only reason we need \cref{prop:hypercontractive} is for the following
corollary.
\begin{cor}
\label{cor:hypercontractive-fourth-moment}Let $\x\in\hyp\left(n,n/2\right)$
and let $f$ be an $n$-variable polynomial of degree $d$. Then 
\[
\E f\left(\x\right)^{4}=e^{O\left(d\right)}\left(\E f\left(\x\right)^{2}\right)^{2}.
\]
\end{cor}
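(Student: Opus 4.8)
The plan is to reduce to a harmonic multilinear polynomial and then run the standard ``hypercontractivity $\Rightarrow$ comparison of the $4$-norm with the $2$-norm'' argument, using \cref{prop:hypercontractive} exactly as one would use the Bonami--Beckner inequality on the Boolean cube. First I would invoke \cref{lem:harmonic} to write $f(\x)=g(\x)$ for a harmonic multilinear polynomial $g$ of degree at most $d$. This step is genuinely needed, since \cref{prop:hypercontractive}, the operators $H_t$, and the orthogonality of the homogeneous parts $g^{=i}$ are all stated only for harmonic multilinear polynomials. Note also that each $g^{=i}$ is itself harmonic multilinear: applying $\sum_j\partial/\partial x_j$ to $g$ lowers degree by one, and since $\sum_j\partial g/\partial x_j=0$ its degree-$(i-1)$ homogeneous component, namely $\sum_j\partial g^{=i}/\partial x_j$, must vanish separately; multilinearity of $g^{=i}$ is clear.

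Next, specialise \cref{prop:hypercontractive} to $p=1/2$ and $q=4$. Here $\rho=\Theta(1/n)$, so one can choose $t=\Theta(n)$ with $q-1=3\le e^{2\rho t}$, and the proposition then gives $\E\left[\left|H_t h(\x)\right|^4\right]^{1/2}\le\E h(\x)^2$ for every $n$-variable harmonic multilinear polynomial $h$. The operator $H_t$ acts diagonally on homogeneous parts with eigenvalue $\lambda_i=\exp\!\left(-t\,\frac{2i(n+1-i)}{n(n-1)}\right)$, and the key elementary estimate is that $\frac{2i(n+1-i)}{n(n-1)}\le\frac{6i}{n}$ for all $0\le i\le n$ (using $n+1-i\le n+1$ and $\frac{n+1}{n-1}\le 3$), so that $\lambda_i^{-1}\le\exp(6ti/n)=e^{O(i)}=e^{O(d)}$ for every $i\le d$. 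Now set $\tilde g=\sum_{i=0}^{d}\lambda_i^{-1}g^{=i}$, which is again harmonic multilinear and satisfies $H_t\tilde g=g$; by orthogonality of the $g^{=i}$,
\[
\E\tilde g(\x)^2=\sum_{i=0}^{d}\lambda_i^{-2}\,\E g^{=i}(\x)^2\le e^{O(d)}\sum_{i=0}^{d}\E g^{=i}(\x)^2=e^{O(d)}\,\E g(\x)^2 .
\]
Applying the hypercontractive bound to $\tilde g$ yields $\E\left[g(\x)^4\right]^{1/2}=\E\left[\left|H_t\tilde g(\x)\right|^4\right]^{1/2}\le\E\tilde g(\x)^2\le e^{O(d)}\E g(\x)^2$, i.e.\ $\E f(\x)^4=\E g(\x)^4\le e^{O(d)}\left(\E f(\x)^2\right)^2$, which is the claim.

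Essentially everything here is routine bookkeeping: the only points requiring a moment's care are the reduction to the harmonic representation (so that the semigroup machinery applies) and the elementary eigenvalue bound ensuring $\lambda_i^{-1}=e^{O(d)}$ uniformly for $i\le d$, so that ``inverting'' $H_t$ on a degree-$\le d$ polynomial costs only a factor $e^{O(d)}$. If one prefers not to construct $\tilde g$, one can instead estimate $\E\left[g(\x)^4\right]^{1/4}\le\sum_{i=0}^{d}\E\left[g^{=i}(\x)^4\right]^{1/4}\le\sum_{i=0}^{d}\lambda_i^{-1}\E\left[g^{=i}(\x)^2\right]^{1/2}\le e^{O(d)}\sqrt{d+1}\,\E\left[g(\x)^2\right]^{1/2}$ by the triangle inequality, the hypercontractive bound applied to each $g^{=i}$, Cauchy--Schwarz, and orthogonality, reaching the same conclusion. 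I do not anticipate a real obstacle, as the substance is contained in \cref{prop:hypercontractive}, which we are free to assume.
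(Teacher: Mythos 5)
Your proposal is correct and follows essentially the same route as the paper: reduce to a harmonic multilinear $g$ via \cref{lem:harmonic}, apply \cref{prop:hypercontractive} with $p=1/2$, $q=4$, $t=\Theta(n)$ to $\tilde g = H_{-t}g$ so that $H_t\tilde g = g$, and use orthogonality of the homogeneous parts plus the eigenvalue bound $\lambda_i^{-1}=e^{O(i)}$ for $i\le d$. Your write-up is somewhat more explicit than the paper's (which writes $\E(H_{-t}f(\x))^2$ and folds the eigenvalue estimate into an $e^{O(i)}$ directly), and your remark that each $g^{=i}$ is itself harmonic multilinear fills in a point the paper leaves implicit, but the underlying argument is the same.
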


\begin{proof}
By \cref{lem:harmonic}, there is a harmonic multilinear polynomial
$g$ of degree at most $d$ such that $f\left(\x\right)=g\left(\x\right)$.
By \cref{prop:hypercontractive} with $p=1/2$, $q=4$ and $t=\Theta\left(n\right)$,
and orthogonality of the different homogeneous parts, we have
\begin{align*}
\left(\E f\left(\x\right)^{4}\right)^{1/2} & \le\E\left(H_{-t}f\left(\x\right)\right)^{2}\\
 & =\sum_{i=0}^{d}e^{O\left(i\right)}\E\left(f^{=i}\left(\x\right)\right)^{2}\\
 & =e^{O\left(d\right)}\E\left(f\left(\x\right)\right)^{2}.\tag*{\qedhere}
\end{align*}
\end{proof}
\cref{lem:hyp-anticoncentration} is now a direct consequence of \cref{cor:hypercontractive-fourth-moment}
and \cref{lem:fourth-moment-anticoncentration}, applied to $f\left(\x\right)-\E f\left(\x\right)$.

\subsection{Coupling}
\label{subsec:coupling}
Many standard probabilistic tools are designed to work for product
distributions, where independence can be exploited. Although $\hyp\left(n,k\right)$
is not a product distribution, there is a well-known way to approximate
$\hyp\left(n,k\right)$ with a product of Bernoulli-distributed random
variables, and for many purposes these distributions can be considered
essentially equivalent (see for example \cite[Corollary~1.16]{JLR00}
and the invariance principles in \cite{FKMW16,FM16}). However, for
the purposes of proving \cref{thm:almost,thm:hypergraph-sqrt} this
kind of approximation is too coarse. Instead we will use a non-standard
coupling between $\hyp\left(n,n/2\right)$ and $\Rad^{n/2}$, where
$\Rad$ is the Rademacher distribution (that is, the uniform distribution
on $\left\{ -1,1\right\} $). The following observation essentially
appears in the proof of \cite[Proposition~4.10]{LLTTY17} (a similar
coupling also appears in %\cite{AHKT}
\cite[p.~15]{AHKT}).
\begin{fact}
\label{fact:coupling}The distribution $\x\in\hyp\left(n,n/2\right)$
can be obtained as follows. Let $\sigma$ be a uniformly random permutation
of $\range n$ and let $\boldsymbol{\gamma}\in\Rad^{n/2}$ be a sequence
of $n/2$ i.i.d.\ Rademacher random variables. Then set $\xi_{\sigma\left(i\right)}=1$
for all $i$ such that $\gamma_{i}=1$ and set $\xi_{\sigma\left(i+n/2\right)}=1$
for all $i$ such that $\gamma_{i}=-1$. For all other indices $j$
set $\xi_{j}=0$.
\end{fact}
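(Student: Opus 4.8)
The plan is to identify the (random) support of $\x$ produced by the construction as the image $\sigma(T)$ of a set $T$ which, once $\g$ is fixed, is deterministic and has size exactly $n/2$, and then to invoke the symmetry of the uniformly random permutation $\sigma$ to conclude that $\sigma(T)$ is uniform over all $n/2$-element subsets of $\range n$.

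First I would fix notation: write $S=\{i\in\range{n/2}:\gamma_i=1\}$, so that the construction sets $\xi_j=1$ precisely for $j$ in
\[
\sigma(S)\cup\sigma\bigl(\{i+n/2:i\in\range{n/2}\setminus S\}\bigr)=\sigma(T),\qquad T:=S\cup\{i+n/2:i\in\range{n/2}\setminus S\}.
\]
The two index sets whose union defines $T$ lie in $\range{n/2}$ and in $\range n\setminus\range{n/2}$ respectively, hence are disjoint, so $|T|=|S|+(n/2-|S|)=n/2$ no matter what $\g$ is; in particular $\x$ always has exactly $n/2$ ones, as required.

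Next I would condition on an arbitrary outcome of $\g$, which fixes $T\in\binom{\range n}{n/2}$. Conditionally, $\sigma$ is still a uniformly random permutation of $\range n$, and for any fixed $T$ with $|T|=n/2$ the image $\sigma(T)$ is uniformly distributed over $\binom{\range n}{n/2}$: each potential target set $U$ equals $\sigma(T)$ for exactly $(n/2)!\,(n/2)!$ permutations $\sigma$, namely those restricting to one of the $(n/2)!$ bijections $T\to U$ together with one of the $(n/2)!$ bijections $\range n\setminus T\to\range n\setminus U$. Hence, conditioned on $\g$, the support of $\x$ is uniform on $\binom{\range n}{n/2}$; since this conditional law does not depend on $\g$, it is also the unconditional law, i.e.\ $\x\in\hyp(n,n/2)$.

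There is no substantive obstacle in this argument; the only point that deserves a moment's care is the bookkeeping around the shift by $n/2$ — one must check that the two halves making up $T$ are disjoint, so that $|T|$ equals $n/2$ \emph{exactly} rather than merely in expectation. This is precisely what ensures the coupling lands on the slice $\hyp(n,n/2)$ rather than on a Bernoulli-type product distribution.
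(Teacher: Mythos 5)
Your proof is correct and complete: conditioning on $\g$ fixes the deterministic set $T$ of size exactly $n/2$, and the image $\sigma(T)$ under a uniformly random permutation is uniform over $\binom{\range n}{n/2}$ by the straightforward counting argument you give. The paper itself states this fact without proof (deferring to references), and your argument is the natural direct verification one would expect.
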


In order to use \cref{fact:coupling}, we need to translate polynomials
of $\x\in\hyp\left(n,n/2\right)$ into polynomials of $\boldsymbol{\gamma}\in\Rad^{n/2}$.
\begin{lem}
\label{lem:coefficients}Consider a random variable $X$ of the form
\[
X=\sum_{S\in \binom{\range n}d}a_{S}\x^{S},
\]
where $\x\in\hyp\left(n,n/2\right)$. Under the coupling in \cref{fact:coupling},
$X$ is a function of $\g,\sigma$. If we condition on any outcome
of $\sigma$, then $X$ is a multilinear polynomial in the $\gamma_{i}$
with degree at most $d$. Consider a subset $I\subseteq\range{n/2}$
of size at least $d-1$, and write $I=\left\{ i_{1},\dots,i_{q}\right\} $;
then the coefficient $g_{I}$ of $\g^{I}$ is
\[
\frac{1}{2^{d}}\sum_{\bb\in\left\{ 0,1\right\} ^{q}}\left(-1\right)^{\left|\bb\right|}a\left(\left\{ \sigma\left(i_{j}+\b_{j}\frac{n}{2}\right):1\le j\le q\right\} \right),
\]
where for $R\subseteq\range n$, $a\left(R\right)$ is the sum of
all $a_{S}$ with $S\supseteq R$.
\end{lem}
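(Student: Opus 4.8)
The plan is to expand $X$ directly under the coupling of \cref{fact:coupling} and read off the multilinear representation of $X$ as a function of $\g$. First I would unwind the coupling coordinate by coordinate: fix an outcome of $\sigma$, and for each $j\in\range n$ let $\varepsilon_{j}\in\{-1,1\}$ and $\iota(j)\in\range{n/2}$ be defined by the rule that $\sigma(\iota(j))=j$ and $\varepsilon_{j}=1$ when $\sigma^{-1}(j)\le n/2$, while $\sigma(\iota(j)+n/2)=j$ and $\varepsilon_{j}=-1$ otherwise. Then \cref{fact:coupling} says exactly that $\xi_{j}=\tfrac12\bigl(1+\varepsilon_{j}\gamma_{\iota(j)}\bigr)$, so in particular $X$ is a function of $(\sigma,\g)$. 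For a set $S$ with $|S|=d$ this gives
\[
\x^{S}=\prod_{j\in S}\xi_{j}=\frac{1}{2^{d}}\prod_{j\in S}\bigl(1+\varepsilon_{j}\gamma_{\iota(j)}\bigr).
\]
If two distinct $j,j'\in S$ satisfy $\iota(j)=\iota(j')$ then necessarily $\varepsilon_{j}=-\varepsilon_{j'}$, so the product contains a factor $(1+\gamma_{\iota(j)})(1-\gamma_{\iota(j)})=1-\gamma_{\iota(j)}^{2}$, which vanishes because $\gamma_{\iota(j)}^{2}=1$; hence $\x^{S}=0$ unless $\iota|_{S}$ is injective. When $\iota|_{S}$ is injective, expanding the product yields
\[
\x^{S}=\frac{1}{2^{d}}\sum_{T\subseteq S}\Bigl(\prod_{j\in T}\varepsilon_{j}\Bigr)\g^{\iota(T)},
\]
where each $\g^{\iota(T)}$ is squarefree with $|\iota(T)|=|T|\le d$. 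Summing over $S$ shows that, conditioned on $\sigma$, $X$ is a multilinear polynomial in the $\gamma_{i}$ of degree at most $d$.

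Next I would extract the coefficient $g_{I}$ of a fixed monomial $\g^{I}$ with $I=\{i_{1},\dots,i_{q}\}$. A monomial $\g^{\iota(T)}$ above equals $\g^{I}$ exactly when $\iota(T)=I$, which forces $T$ to contain, for each $\ell$, exactly one of the two elements $\sigma(i_{\ell})$ and $\sigma(i_{\ell}+n/2)$ of $\iota^{-1}(i_\ell)$; so $T=T_{\bb}:=\{\sigma(i_{\ell}+b_{\ell}n/2):1\le\ell\le q\}$ for a unique $\bb\in\{0,1\}^{q}$, and $\prod_{j\in T_{\bb}}\varepsilon_{j}=(-1)^{|\bb|}$. (One checks routinely that the indices $i_\ell+b_\ell n/2$ are distinct, so $|T_\bb|=q$, and that $\iota$ maps $T_\bb$ bijectively onto $I$.) Grouping the sum over $S$ accordingly gives
\[
g_{I}=\frac{1}{2^{d}}\sum_{\bb\in\{0,1\}^{q}}(-1)^{|\bb|}\!\!\sum_{\substack{S\supseteq T_{\bb},\ |S|=d\\ \iota|_{S}\text{ injective}}}\!\!a_{S}.
\]
If $q=d$ the inner sum is just $a_{T_{\bb}}=a(T_{\bb})$ and we are done; if $q>d$ both sides vanish. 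If $q=d-1$, then $S=T_{\bb}\cup\{m\}$ for some $m\notin T_{\bb}$, and injectivity of $\iota|_{S}$ excludes exactly those $m$ equal to the "other" preimage $\sigma(i_{\ell}+(1-b_{\ell})n/2)$ of some $i_{\ell}\in I$; hence the inner sum equals $a(T_{\bb})=\sum_{m\notin T_{\bb}}a_{T_{\bb}\cup\{m\}}$ minus these forbidden contributions.

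The one genuine point — which I expect to be the main obstacle — is to check that the forbidden contributions cancel after being multiplied by $(-1)^{|\bb|}$ and summed over $\bb$. Fix $\ell_{0}\in\{1,\dots,q\}$ and consider the involution on $\{0,1\}^{q}$ that flips the coordinate $b_{\ell_{0}}$. For $\bb$, the forbidden $m$ associated with $\ell_{0}$ is $\sigma(i_{\ell_{0}}+(1-b_{\ell_{0}})n/2)$, and the set $S=T_{\bb}\cup\{m\}$ contains \emph{both} preimages of $i_{\ell_{0}}$ and, for every $\ell\ne\ell_{0}$, the single element $\sigma(i_{\ell}+b_{\ell}n/2)$; thus $S$ is unchanged when $b_{\ell_{0}}$ is flipped, whereas $(-1)^{|\bb|}$ changes sign. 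So the forbidden contributions associated with $\ell_{0}$ cancel in pairs, and summing over $\ell_{0}$ kills all of them. This leaves $g_{I}=2^{-d}\sum_{\bb}(-1)^{|\bb|}a(T_{\bb})$, which is the claimed formula; the remaining verifications (that no $S$ is double-counted, and the edge cases $q=d$ and $q>d$) are bookkeeping.
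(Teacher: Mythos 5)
Your proof is correct and uses the same basic approach as the paper (unwind the coupling into the identity $\xi_{x}=\tfrac12\bigl(1+\varepsilon_{x}\gamma_{\iota(x)}\bigr)$, expand, and collect the coefficient of $\g^{I}$), but you take a small detour that the paper's organization avoids. You discard the sets $S$ with $\iota|_{S}$ non-injective at the outset, on the grounds that $\x^{S}=0$ for them; this makes the sum defining $g_{I}$ a \emph{restricted} sum $\sum_{S\supseteq T_{\bb},\,\iota|_{S}\text{ inj}}a_{S}$, and you then need the involution argument (flipping $b_{\ell_{0}}$) to show that the excluded terms cancel across different $\bb$, recovering the unrestricted $a(T_{\bb})$. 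The paper instead keeps the full formal expansion
\[
X=\frac{1}{2^{d}}\sum_{S}a_{S}\sum_{R\subseteq S}(-1)^{|\alpha(R)|}\prod_{x\in R}\gamma_{\tilde\sigma(x)}
\]
without ever discarding any $S$, and observes directly that, since $|R|\le d$ and $q\ge d-1$, the monomial $\prod_{x\in R}\gamma_{\tilde\sigma(x)}$ reduces to $\g^{I}$ precisely when $R=T_{\bb}$; therefore $g_{I}=\tfrac{1}{2^{d}}\sum_{\bb}(-1)^{|\bb|}\sum_{S\supseteq T_{\bb}}a_{S}$ falls out immediately, with no cancellation lemma. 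The hidden reason the two routes agree is the one you in effect verify: for a non-injective $S\supseteq T_{\bb}$ one also has $T_{\bb'}\subseteq S$ for the flipped $\bb'$, and the two contributions $(-1)^{|\bb|}a_{S}+(-1)^{|\bb'|}a_{S}$ cancel within the expansion of $\x^{S}$ itself, which is consistent with $\x^{S}=0$. So your argument is sound; it just proves a cancellation the paper sidesteps by not restricting the sum over $S$ in the first place.
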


\begin{proof}
Given a permutation $\sigma$ of $\range n$, define the functions
$\alpha:\range n\to\left\{ 0,1\right\} $ and $\tilde{\sigma}:\range n\to\range{n/2}$
by
\[
\left(\alpha\left(x\right),\tilde{\sigma}\left(x\right)\right)=\begin{cases}
\left(0,\sigma^{-1}\left(x\right)\right) & \sigma^{-1}\left(x\right)\le n/2\\
\left(1,\sigma^{-1}\left(x\right)-n/2\right) & \text{otherwise}.
\end{cases}
\]
Also, for $S\subseteq\range n$, let $\left|\alpha\left(S\right)\right|$
be the number of $x\in S$ for which $\alpha\left(x\right)=1$.

Now, observe that $\xi_{x}=\left(1+\left(-1\right)^{\alpha\left(x\right)}\gamma_{\tilde{\sigma}\left(x\right)}\right)/2$.
We may write
\begin{align*}
X & =\frac{1}{2^{d}}\sum_{S\in \binom{\range n}d}a_{S}\prod_{x\in S}\left(1+\left(-1\right)^{\alpha\left(x\right)}\gamma_{\tilde{\sigma}\left(x\right)}\right)\\
 & =\frac{1}{2^{d}}\sum_{S\in \binom{\range n}d}a_{S}\sum_{R\subseteq S}\left(-1\right)^{\left|\alpha\left(R\right)\right|}\prod_{x\in R}\gamma_{\tilde{\sigma}\left(x\right)}.
\end{align*}
Consider any $I=\left\{ i_{1},\dots,i_{q}\right\} \subseteq\range{n/2}$
with $q\ge d-1$. We have $\g^{I}=\prod_{x\in R}\gamma_{\tilde{\sigma}\left(x\right)}$
if and only if $R$ is of the form $\left\{ \sigma\left(i_{j}+\b_{j}n/2\right):1\le j\le q\right\} $
for some $\bb\in\left\{ 0,1\right\} ^{q}$, in which case
$\left|\alpha\left(R\right)\right|=\left|\bb\right|$.
(If $q<d-1$ then there are other possibilities for $R$ due to the fact that $\gamma_{i}^{2}=1$). The desired result follows.
\end{proof}

As an illustration of \cref{lem:coefficients}, we consider the special
case where $X$ is of the form $\sum_{i<j}a_{ij}\xi_{i}\xi_{j}$,
for $\x\in\hyp\left(n,n/2\right)$. If $G$ is an $n$-vertex graph
with adjacency matrix $\left(a_{ij}\right)$, then this random variable
has precisely the distribution of $X_{G,n/2}$. Under the coupling
in \cref{fact:coupling}, if we condition on any outcome of $\sigma$,
then $X$ is a quadratic polynomial in the $\gamma_{i}$, and the
coefficient of $\gamma_{i}\gamma_{j}$ is
\[
\frac{1}{4}\left(a_{\sigma\left(i\right)\sigma\left(j\right)}-a_{\sigma\left(i\right)\sigma\left(j+n/2\right)}-a_{\sigma\left(i+n/2\right)\sigma\left(j\right)}+a_{\sigma\left(i+n/2\right)\sigma\left(j+n/2\right)}\right).
\]

\subsection{Polynomial Anticoncentration}

In the proof of \cref{thm:almost,thm:hypergraph-sqrt} we will use
an anticoncentration inequality for polynomials of Rademacher random
variables proved by Meka, Nguyen and Vu \cite{MNV16}. For $\boldsymbol{x}=\left(x_{1},\dots,x_{n}\right)$,
consider a degree-$d$ polynomial 
\[
f\left(\boldsymbol{x}\right)=\sum_{S\in \binom{\range n}{\le d}}f_{S}\boldsymbol{x}^{S}
\]
in $\boldsymbol{x}$. The \emph{rank} of $f$ is the size of the largest
matching in the $d$-uniform hypergraph on the vertex set $\range n$
obtained by putting an edge $S\subseteq\range n$ whenever $\left|S\right|=d$
and $f_{S}\ne0$. The following theorem is a direct corollary of \cite[Theorem~1.6]{MNV16}. 
\begin{thm}
\label{thm:MNV}Fix $d\in\NN$ and let $\g\in\Rad^{n}$. Let $f$
be a degree-$d$ polynomial with rank $r$. Then for any $\ell\in\RR$,
\[
\Pr\left(f\left(\g\right)=\ell\right)\le\frac{\log^{O\left(1\right)}\left(r\right)}{\sqrt{r}}.
\]
\end{thm}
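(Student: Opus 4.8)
The plan is to deduce \cref{thm:MNV} directly from \cite[Theorem~1.6]{MNV16}, which is an anticoncentration inequality for multilinear polynomials $f=\sum_S f_S\boldsymbol{x}^S$ of degree $d$ in independent random variables, and whose strength is governed by exactly the parameter we have called the \emph{rank} of $f$ --- the largest collection of pairwise-disjoint sets $S_1,\dots,S_r\in\binom{\range n}{d}$ with all $f_{S_j}\ne0$. So the first and essentially only step is to recall the precise statement of \cite[Theorem~1.6]{MNV16}, specialise the underlying variables to be i.i.d.\ Rademacher, and read off that the resulting bound on $\sup_{\ell\in\RR}\Pr\left(f(\g)=\ell\right)$ has the form $(\log r)^{O_d(1)}/\sqrt r$.

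What remains is only to reconcile conventions. Since $d$ is a fixed constant in \cref{thm:MNV}, the dependence on $d$ of both the implied constant and the logarithmic exponent in \cite{MNV16} is harmless: it is absorbed into the implied constant and the ``$O(1)$'' of our statement (which we permit to be $0$), and we may assume $r$ is larger than any fixed constant since the inequality is otherwise trivial. One should also check that the Rademacher distribution meets whatever mild non-degeneracy hypothesis \cite[Theorem~1.6]{MNV16} places on the underlying variables, but this is the canonical example and causes no difficulty; and rescaling $f$ by a constant, should \cite{MNV16} impose a normalisation, changes neither its rank nor the point probabilities $\Pr\left(f(\g)=\ell\right)$.

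I do not expect a genuine obstacle --- ``direct corollary'' should be read at face value --- so the main thing to get right is purely bookkeeping: confirming that \cite[Theorem~1.6]{MNV16} is already stated in the form we want, namely for point probabilities rather than small-ball probabilities $\Pr\left(\left|f(\g)-\ell\right|\le\delta\right)$, and for not-necessarily-homogeneous $f$ with the rank measured on the degree-$d$ part only. (This non-homogeneous form is precisely what we will apply later, since in the proofs of \cref{thm:almost,thm:hypergraph-sqrt} we expose $X_{G,k}$ via the coupling of \cref{fact:coupling} and the coefficient formula of \cref{lem:coefficients}, obtaining after conditioning on $\sigma$ a non-homogeneous polynomial in the $\gamma_i$.) In any case, passing from a small-ball bound to a point bound is immediate, since $\Pr\left(f(\g)=\ell\right)\le\Pr\left(\left|f(\g)-\ell\right|\le\delta\right)$ for every $\delta\ge0$.
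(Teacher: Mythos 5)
Your proposal matches the paper exactly: the paper gives no proof of \cref{thm:MNV} beyond the remark that it is ``a direct corollary of \cite[Theorem~1.6]{MNV16}'', and your bookkeeping about specialising to Rademacher variables, non-homogeneous $f$ with rank read off the degree-$d$ part, and passing from small-ball to point probabilities is exactly the routine reconciliation one would carry out. No gap.
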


A drawback of \cref{thm:MNV} is that it ignores coefficients of $f$
that are not of maximum degree. We deduce the following result, which
allows us to take these coefficients into account under certain circumstances.
We will need this for the proof of \cref{thm:hypergraph-sqrt}.
\begin{cor}
\label{cor:second-degree-MNV}Fix $d\in\NN$, let $\g\in\Rad^{n}$,
and consider a degree-$d$ polynomial
\[
f\left(\boldsymbol{x}\right)=\sum_{S\in \binom{\range n}{\le d}}f_{S}\boldsymbol{x}^{S}.
\]
Let $m_{d}=\max\left\{ \left|f_{S}\right|:\left|S\right|=d\right\} $
be the maximum coefficient of degree $d$, and let $H'$ be the $\left(d-1\right)$-uniform
hypergraph with edge set $\left\{ S:\left|S\right|=d-1,\left|f_{S}\right|\ge rm_{d}\right\} $.
Suppose that $H'$ has a matching of size $r$. Then for any $\ell\in\RR$,
\[
\Pr\left(f\left(\g\right)=\ell\right)\le\frac{\log^{O\left(1\right)}\left(r\right)}{\sqrt{r}}.
\]
\end{cor}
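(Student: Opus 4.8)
The plan is to reduce \cref{cor:second-degree-MNV} to \cref{thm:MNV} by a conditioning argument: we fix the values of a well-chosen small set of coordinates so that the residual polynomial in the remaining variables has degree exactly $d-1$ and has rank (in the sense of \cref{thm:MNV}) at least of order $r$, then apply \cref{thm:MNV} to that residual polynomial. The key point is that after conditioning the degree-$d$ terms away, the degree-$(d-1)$ coefficients $f_S$ with $|S| = d-1$ and $|f_S| \ge r m_d$ should survive, up to perturbations of size at most $r m_d$ coming from the conditioned degree-$d$ terms, which is why the threshold $r m_d$ is exactly the right one.

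First I would fix a matching $M = \{S_1,\dots,S_r\}$ of size $r$ in $H'$, and let $V_M = S_1 \cup \dots \cup S_r$ (so $|V_M| = r(d-1)$). I would pick one ``pivot'' coordinate $v_t \in S_t$ from each matching edge and let $T = \{v_1,\dots,v_r\}$; write $S_t = \{v_t\} \cup S_t'$ with $|S_t'| = d-2$, so the $S_t'$ are pairwise disjoint and disjoint from $T$. Now condition on the coordinates of $\g$ outside $T$ — i.e. reveal $\gamma_i$ for all $i \notin T$. The resulting conditional polynomial $f_\sigma$ (abusing notation; really $f$ restricted and with revealed variables plugged in) is a polynomial in the $r$ variables $(\gamma_v)_{v\in T}$... but that is the wrong move, since it only has $r$ variables and degree $1$ in the worst case. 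Instead, the right conditioning is the opposite one: reveal everything \emph{inside} $V_M$ except... no. Let me restate: I would instead leave free exactly the variables indexed by $T \cup \bigcup_t S_t'$ — that is, leave free all of $V_M$ — and condition on (reveal) all coordinates $\gamma_i$ with $i \notin V_M$. Then $f(\g)$ becomes a degree-$d$ polynomial $\tilde f$ in the $r(d-1)$ free variables indexed by $V_M$. The degree-$(d-1)$ part of $\tilde f$ contains, for each $t$, a contribution to the monomial $\g^{S_t}$ whose coefficient is $f_{S_t}$ \emph{plus} at most (number of degree-$d$ sets $S \supsetneq S_t$) $\cdot m_d$ worth of revealed-variable contributions — but this could still be up to $n m_d$, too large. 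So in fact the cleanest route is: reveal all coordinates \emph{except} for one pivot $v_t$ per matching edge together with \emph{one extra fixed coordinate} $w \notin V_M$ chosen so that $\gamma_w$ is still free; hmm.

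Let me commit to the conditioning that actually works. Reveal all $\gamma_i$ with $i \notin T$, where $T = \{v_1,\dots,v_r\}$ and $v_t \in S_t$, and where additionally we \emph{average over} the revealed coordinates cleverly — no; the honest statement is: condition on the event that $\gamma_i = \varepsilon_i$ for all $i \notin T$, for a worst-case choice of signs $\varepsilon_i$ that \emph{maximizes} $\Pr(f(\g)=\ell \mid \text{this conditioning})$; it suffices to bound this conditional probability. Under this conditioning $f$ becomes a \emph{multilinear polynomial in the $r$ variables} $(\gamma_{v_t})_{t=1}^r$, hence has degree at most $r$ — not $\le d-1$, so \cref{thm:MNV} does not directly apply with the degree parameter $d$. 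The fix: choose $T$ to contain exactly one coordinate from each $S_t$, note the $S_t$ are disjoint, so each surviving variable $\gamma_{v_t}$ appears; the coefficient of the degree-$1$ monomial $\gamma_{v_t}$ (a ``rank-$1$'' edge in the $1$-uniform sense) after conditioning is $\sum_{S \ni v_t,\ S\setminus\{v_t\} \subseteq \text{revealed part consistent with }\varepsilon} f_S \prod (\text{signs})$. The degree-$(d-1)$ term $f_{S_t}\g^{S_t}$ contributes $f_{S_t}\cdot\prod_{u \in S_t'}\varepsilon_u$ to this coefficient; the degree-$d$ terms contribute at most $\deg$-count $\times m_d$. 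For the hypergraph to have rank $r$ we need these $r$ coefficients nonzero. This forces us to bound the degree-$d$ contamination by $< |f_{S_t}|$, and since $|f_{S_t}| \ge r m_d$ we get room provided the number of degree-$d$ supersets of $S_t$ contributing is $< r$ — which we can \emph{arrange} by also revealing enough coordinates to kill all but at most $r-1$ of the offending degree-$d$ monomials, or by a more careful random choice of $T$ (a probabilistic/greedy argument showing a choice of pivots exists under which few degree-$d$ terms survive per edge).

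The main obstacle, then, is exactly this contamination control: showing there is a choice of one pivot $v_t$ per matching edge $S_t$ (and possibly a choice of which coordinates to reveal versus average over) such that, after conditioning, each of the $r$ resulting linear coefficients is nonzero — equivalently, such that the degree-$d$ terms do not conspire to cancel the degree-$(d-1)$ coefficient $f_{S_t}$. I expect the cleanest way to handle this is a union-bound / deletion argument: if some matching edge $S_t$ has ``too many'' ($\ge r$) degree-$d$ supersets with large coefficients it could be problematic, but one shows the number of \emph{bad} matching edges is at most $r/2$ (by a double-counting argument against $m_d$ being the maximum), delete them, and apply \cref{thm:MNV} with rank $r/2$, which only affects the constant in the $\log^{O(1)}$ factor and leaves the $1/\sqrt{r}$ rate intact. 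This last step is routine once set up; the subtlety is purely in choosing the pivots and the conditioning so that \cref{thm:MNV} sees a genuine degree-$(d-1)$ polynomial of rank $\Omega(r)$.
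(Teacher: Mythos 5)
Your proposal identifies the right tension --- you need to condition so that the degree-$(d-1)$ coefficients survive as genuine top-degree coefficients of the residual polynomial --- but the conditioning you finally commit to does not resolve it, and two concrete obstacles remain that your sketch does not surmount.

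First, contamination is genuinely unbounded in your scheme. If you reveal all $\gamma_i$ with $i \notin T$ where $|T| = r$, then the coefficient of $\gamma_{v_t}$ in the residual picks up a contribution from \emph{every} set $S$ with $v_t \in S$, $|S| \le d$, and $S \setminus \{v_t\}$ disjoint from $T$. The degree-$d$ part alone is a signed sum over up to $\binom{n-r}{d-1}$ sets, each of magnitude up to $m_d$, i.e.\ potentially $\Theta(n^{d-1} m_d)$, against which the hypothesis $|f_{S_t}| \ge r m_d$ gives no leverage. There is also no hypothesis at all on coefficients of degree $< d-1$, so those contributions are unbounded too. The proposed double-counting ``against $m_d$ being the maximum'' cannot beat this: nothing prevents $\Theta(n^{d-1})$ degree-$d$ coefficients from all equalling $m_d$. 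Second, even if you could force the linear coefficients nonzero, the residual in $(\gamma_{v_t})_t$ generally has degree up to $\min(d,r) \ge 2$, since any original monomial $\gamma^{S}$ with $|S \cap T| \ge 2$ survives. The rank in \cref{thm:MNV} is determined by the nonzero coefficients of \emph{top} degree, so the linear coefficients you control would then simply not enter the rank.

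The paper's proof avoids both problems with a dichotomy you did not try. Let $H$ be the $d$-uniform hypergraph of nonzero degree-$d$ coefficients. If $H$ has a matching of size $r/(2d)$, apply \cref{thm:MNV} directly. Otherwise $H$ has an independent set $I$ with $|[n]\setminus I| < r/2$; condition on $(\gamma_i)_{i\notin I}$. Because $I$ is independent in $H$, \emph{every} degree-$d$ monomial is killed, so the residual $g$ has degree $\le d-1$ (disposing of the second obstacle). And for $S \subseteq I$ of size $d-1$, the coefficient of $\g^S$ in $g$ is $f_S + \sum_{i\notin I} f_{S\cup\{i\}}\gamma_i$, a perturbation of $f_S$ by at most $(r/2)m_d$ since fewer than $r/2$ variables were revealed (disposing of the first obstacle, and explaining the threshold $r m_d$). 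Removing $< r/2$ vertices destroys at most $< r/2$ edges of the size-$r$ matching in $H'$, so $g$ has rank $\ge r/2$, and \cref{thm:MNV} finishes. Your intuition that $r m_d$ is ``exactly the right'' threshold was correct; the missing idea is to condition away the degree-$d$ part via an independent set of $H$ rather than by isolating one pivot per matching edge.
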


\begin{proof}
Let $H$ be the $d$-uniform hypergraph used to define the rank of
$f$, with an edge for every nonzero degree-$d$ coefficient. If $H$
has a matching of size $r/(2d)$ then we are done by \cref{thm:MNV}. Otherwise,
$H$ has an independent set $I$ of size larger than $n-r/2$. Condition
on any outcome of the values $\gamma_{i}$ for $i\notin I$. Now,
$f\left(\g\right)$ can be expressed as a polynomial $g\left(\left(\gamma_{i}\right)_{i\in I}\right)$
of the remaining $\gamma_{i}$, depending on the values of $\gamma_{i}$
for $i\notin I$. This polynomial has degree at most $d-1$. Specifically,
for $S\subseteq I$ with size $d-1$, the coefficient of $S$ in $g$
is 
\[
f_{S}+\sum_{i\notin I}f_{S\cup\left\{ i\right\} }\gamma_{i}.
\]
If $f_{S}\ge rm_{d}$, then this coefficient is nonzero (in fact, $f_{S}\ge (r/2)m_{d}$ suffices), so each
edge of $H'[I]$ corresponds to a nonzero degree-$\left(d-1\right)$
coefficient in $g$. Moreover, by assumption $H'[I]$ has a matching of size at least $r-(n-|I|)\ge r/2$. Therefore, $g$ has rank at least
$r/2$, so the desired result again follows from \cref{thm:MNV}.
\end{proof}

\section{\label{sec:almost}Anticoncentration in graphs}

In this section we present the proof of \cref{thm:almost}. It suffices to prove that $\Pr\left(X_{G,k}=\ell\right)=O\left(\log^{O\left(1\right)}\left(\minell/k\right)/\sqrt{\minell/k}\right)$
for every graph $G$ with $2k$ vertices, because $I\left(n,k,\ell\right)$
is a monotone nonincreasing function of $n$. So, let $G$ be a graph
on the vertex set $\range n$, for $n=2k$. Let $X=X_{G,k}$. We may
assume that $e\left(G\right)\le\binom{n}{2}/2$,
by taking the complement of $G$ if necessary. We express $X$ in
the form $X\left(\x\right)=\sum_{1\le x<y\le n}a_{xy}\xi_{x}\xi_{y}$,
where the $a_{xy}$ are the entries of the adjacency matrix of $G$
and $\x\in\hyp\left(n,n/2\right)$.

Note that $\E X\approx e\left(G\right)/4$. We first want to use \cref{lem:concentration}
to show that if $e\left(G\right)$ is not at least of the same order as $\ell$
then $\Pr\left(X_{G,k}=\ell\right)$ is very small.
\begin{claim}
\label{claim:many-edges}For any constant $\varepsilon>0$, if $\ell\ge\left(1+\varepsilon\right)\E X$ or $\ell\le\left(1-\varepsilon\right)\E X$
then 
\[
\Pr\left(X=\ell\right)\le\exp\left(-\Omega\left(\frac{\varepsilon^{2}\ell}{k}\right)\right).
\]
\end{claim}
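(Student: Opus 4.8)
The plan is a routine application of the concentration inequality in \cref{lem:concentration} to the quadratic function $X(\boldsymbol{x}) = \sum_{1\le x<y\le n} a_{xy} x_x x_y$, combined with the observation that $\E X$ is of order $e(G)/k$ (since $n = 2k$), so that the additive deviation $|\ell - \E X| \ge \varepsilon \E X$ we are trying to rule out is itself of order at least $\varepsilon \ell / k$ in the relevant regime. Concretely, I would first compute $\E X$ exactly: a fixed edge $\{x,y\}$ is induced with probability $\binom{n-2}{k-2}/\binom{n}{k} = \frac{k(k-1)}{n(n-1)}$, so $\E X = e(G)\cdot \frac{k(k-1)}{n(n-1)} = (1+o(1))\,e(G)/4$ when $n = 2k$. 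Thus $\E X = \Theta(e(G)/k)$, and in particular whenever $\ell \ge (1+\varepsilon)\E X$ or $\ell \le (1-\varepsilon)\E X$, the quantity $t := |\ell - \E X| = \varepsilon \E X$ satisfies $t = \Theta(\varepsilon\, e(G)/k)$. We may assume $\ell \le \binom{k}{2}$ is comparable to $\E X$ up to the constant $1+\varepsilon$ (if $\ell \ge (1+\varepsilon)\E X$ with $\ell$ much larger than $\E X$ the bound is only easier), so it is safe to treat $t$ as being of order $\varepsilon \ell/k$.

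Next I would verify the bounded-differences hypothesis of \cref{lem:concentration}. Flipping the $i$-th coordinate of $\boldsymbol{x}$ from $1$ to $0$ changes $X$ by $\sum_{y \ne i} a_{iy} x_y = \deg_G(i)$ in the worst case, so we may take $c_i = \deg_G(i)$. Then $\sum_i c_i^2 = \sum_i \deg_G(i)^2$. To bound this I would split the vertex set by degree: either most of the edge-mass is carried by vertices of degree $O(e(G)/k)$, in which case $\sum_i \deg_G(i)^2 = O\big((e(G)/k)\cdot \sum_i \deg_G(i)\big) = O(e(G)^2/k)$, or there are high-degree vertices — but a vertex has degree at most $n-1 = 2k-1$, and $\sum_i \deg_G(i) = 2e(G)$, so in all cases $\sum_i \deg_G(i)^2 \le (\max_i \deg_G(i)) \cdot 2e(G) \le 2(2k-1) e(G) = O(k\, e(G))$. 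Plugging into \cref{lem:concentration} with $t = \Theta(\varepsilon e(G)/k)$ gives
\[
\Pr(X = \ell) \le \Pr\big(|X - \E X| \ge t\big) \le 2\exp\left(-\Omega\left(\frac{\varepsilon^2 e(G)^2/k^2}{k\, e(G)}\right)\right) = 2\exp\left(-\Omega\left(\frac{\varepsilon^2 e(G)}{k^3}\right)\right),
\]
which is far from what is claimed; the naive bound $\sum_i \deg_G(i)^2 = O(k\,e(G))$ is too lossy. The fix is to not bound $\sum \deg_G(i)^2$ by the maximum degree but rather to keep $\sum_i \deg_G(i)^2$ and exploit that $\E X = \Theta(e(G)/k)$ and $\ell = \Theta(\E X)$, so $e(G) = \Theta(k\ell/k) = \Theta(\ell)$... wait, $e(G) = \Theta(k \E X) = \Theta(k\ell)$. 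Then $t = \Theta(\varepsilon \ell)$ and $\sum_i c_i^2 \le 2(2k-1)e(G) = O(k^2 \ell)$, so the exponent is $\Omega(\varepsilon^2 \ell^2 / (k^2 \ell)) = \Omega(\varepsilon^2 \ell/k^2)$ — still off by a factor $k$ from the claimed $\Omega(\varepsilon^2\ell/k)$.

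The main obstacle, then, is getting the correct power of $k$, and the resolution is that we should not apply \cref{lem:concentration} to all of $[n]$ with $c_i = \deg_G(i)$ but rather exploit that the variance of $X$ is only of order $e(G)$, not $k\,e(G)$. The honest approach: bound $\sum_i \deg_G(i)^2$ more carefully using $\deg_G(i) \le n - 1 < 2k$ only for the \emph{few} vertices that are problematic, or — cleaner — observe that by convexity and $\sum_i \deg_G(i) = 2e(G)$ with the assumption $e(G) \le \binom{n}{2}/2$, a second-moment / dyadic argument bounds $\sum_i \deg_G(i)^2$ appropriately once we note $e(G) = \Theta(k\ell)$ forces $e(G) = O(k^2)$, hence the \emph{average} degree is $O(k)$ but we want the sum of squares to be $O(k \cdot e(G)) = O(k^2 \ell)$, giving exponent $\Omega(\varepsilon^2 \ell / k^2)$. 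To recover the claimed $\Omega(\varepsilon^2 \ell/k)$, one uses instead the sharper Azuma analysis where, after conditioning on $\sigma$ in the coupling of \cref{fact:coupling}, the quadratic polynomial $f_\sigma(\boldsymbol{\gamma})$ in only $n/2 = k$ Rademacher variables has martingale differences bounded by linear forms in $\boldsymbol{\gamma}$ whose coefficients — by the computation in \cref{subsec:coupling} — are bounded by $O(1)$ each with the number of nonzero ones controlled by $\deg_G$; summing these squared martingale differences over only $k$ steps and using $e(G) = \Theta(k\ell)$ yields $\sum c_i^2 = O(k\ell)$ and hence the desired $\Pr(X = \ell) \le \exp(-\Omega(\varepsilon^2 \ell/k))$. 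I expect the write-up to apply \cref{lem:concentration} directly but with the key input being the bound $\sum_i \deg_G(i)^2 = O(k\, e(G))$ together with $e(G) = \Theta(k\ell)$ only being invoked in the right place, so that the final exponent comes out as $-\Omega(\varepsilon^2 \ell / k)$ as claimed.
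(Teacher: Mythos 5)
Your intended approach is exactly the paper's: apply \cref{lem:concentration} with $c_i=\deg_G(i)$, bound $\sum_i\deg_G(i)^2$ by convexity, and convert $e(G)$ to $\ell$. The trouble is a single arithmetic slip that then snowballs. You correctly compute $\E X = e(G)\cdot\frac{k(k-1)}{n(n-1)} = (1+o(1))\,e(G)/4$ for $n=2k$, but immediately after write ``Thus $\E X=\Theta(e(G)/k)$.'' That contradicts the line you just wrote: the ratio $k(k-1)/(n(n-1))$ is $\Theta(1)$, not $\Theta(1/k)$, so in fact $\E X=\Theta(e(G))$. This one error propagates through the rest of the computation. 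With the correct $\E X=\Theta(e(G))$ you get $t=\Theta(\varepsilon\,e(G))$ (not $\Theta(\varepsilon e(G)/k)$), and in the regime $\ell\le(1-\varepsilon)\E X$ you get $e(G)=\Omega(\ell)$ (not $\Theta(k\ell)$; the latter is impossible anyway, since $e(G)\le\binom{n}{2}=O(k^2)$ while $\ell$ can be as large as $\Theta(k^2)$). Plugging these into your own bound $\sum_i\deg_G(i)^2=O(k\,e(G))$ gives
\[
\frac{t^2}{\sum_i c_i^2}=\Omega\!\left(\frac{\varepsilon^2 e(G)^2}{k\,e(G)}\right)=\Omega\!\left(\frac{\varepsilon^2 e(G)}{k}\right)=\Omega\!\left(\frac{\varepsilon^2\ell}{k}\right),
\]
which is exactly the claimed exponent. (In the other regime $\ell\ge(1+\varepsilon)\E X$ one has $t=\Omega(\varepsilon\ell)$ and $e(G)=\Theta(\E X)=O(\ell)$, which gives the same bound.) So the ``factor-of-$k$ gap'' you diagnosed does not exist, and the detour through martingale refinements and the \cref{fact:coupling} coupling is unnecessary: the straight application of \cref{lem:concentration}, done with the correct arithmetic, already gives the claim.
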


\begin{proof}
Note that $X$ is of the form required to apply \cref{lem:concentration},
with $c_{x}=\deg\left(x\right)$. Then
\begin{align*}
\Pr\left(X=\ell\right) & \le\exp\left(-\Omega\left(\frac{\varepsilon^{2}e\left(G\right)^{2}}{\sum_{x=1}^{n}\deg\left(x\right)^{2}}\right)\right)\\
 & \le\exp\left(-\Omega\left(\frac{\varepsilon^{2}e\left(G\right)^{2}}{n^{2}\left(e\left(G\right)/n\right)}\right)\right)\\
 & \le\exp\left(-\Omega\left(\frac{\varepsilon^{2}\ell}{k}\right)\right),
\end{align*}
where in the second inequality we have used the upper bound on $\sum_{x=1}^{n}\deg\left(x\right)^{2}$ obtained by taking $\deg(x)=n$ for as many $x$ as possible, and in the third inequality we have used the fact that $\ell\le\left(1-\varepsilon\right)\E X=O\left(e\left(G\right)\right)$.
\end{proof}
From now on we will assume that $e\left(G\right)=\Omega\left(\ell\right)$, which also implies that $\ell=\Theta(\ell^*)$.
Now, let $\sigma$ be a uniformly random permutation of $\range n$,
and let $H$ be the (random) graph on the vertex set $\range k$ with
an edge between $i$ and $j$ if
\[
a_{\sigma\left(i\right)\sigma\left(j\right)}-a_{\sigma\left(i\right)\sigma\left(j+k\right)}-a_{\sigma\left(i+k\right)\sigma\left(j\right)}+a_{\sigma\left(i+k\right)\sigma\left(j+k\right)}\ne0.
\]
The heart of the proof of \cref{thm:almost} is the following claim.
\begin{claim}
\label{claim:matching}
The graph $H$ has a matching of size $\Omega\left(\ell/k\right)$, with probability $1-O\left(k/\ell\right)$.
\end{claim}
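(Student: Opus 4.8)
The plan is to find the matching in $H$ greedily, exposing the relevant part of the permutation $\sigma$ one pair at a time, and to control the process with a union bound / martingale estimate. Recall we are in the regime $e(G)=\Omega(\ell)$, so $G$ has $\Omega(\ell)$ edges and (by our reduction $e(G)\le\binom n2/2$) also $\Omega(n^2)=\Omega(k^2)$ non-edges; equivalently, writing $p=e(G)/\binom n2$, we have $p=\Omega(\ell/k^2)$ and $1-p=\Omega(1)$. The key point is that an edge $\{i,j\}$ is present in $H$ precisely when the four entries $a_{\sigma(i)\sigma(j)}$, $a_{\sigma(i)\sigma(j+k)}$, $a_{\sigma(i+k)\sigma(j)}$, $a_{\sigma(i+k)\sigma(j+k)}$ do not sum to zero with the alternating signs, i.e.\ when, among the four pairs $\{\sigma(i),\sigma(j)\}$, $\{\sigma(i),\sigma(j+k)\}$, $\{\sigma(i+k),\sigma(j)\}$, $\{\sigma(i+k),\sigma(j+k)\}$, the pattern of edges/non-edges is \emph{not} ``all four equal and not a 2+2 split of the alternating type''. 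Concretely, $\{i,j\}\in E(H)$ holds whenever these four pairs contain both an edge and a non-edge in an ``unbalanced'' way; this is an event of probability $\Omega(1)$ as long as $G$ has $\Omega(k^2)$ edges and $\Omega(k^2)$ non-edges — which, crucially, is exactly the fact that a $2$-edge-coloured complete graph with many red and many blue edges has many ``alternating'' $4$-tuples, as flagged in the introduction.

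The main work, however, is getting a matching of size $\Omega(\ell/k)$ rather than just $\Omega(1)$, and here we must use the finer quantitative count $e(G)=\Omega(\ell)$. First I would establish a counting lemma: the number of ordered $4$-tuples $(x,x',y,y')$ of distinct vertices with $a_{xy}-a_{xy'}-a_{x'y}+a_{x'y'}\ne0$ is $\Omega(e(G)\cdot k^2)=\Omega(\ell k^2)$. This follows from a double-counting / convexity argument: summing $(a_{xy}-a_{xy'}-a_{x'y}+a_{x'y'})^2$ over all such tuples expands into terms controlled by $e(G)$ and $\sum_x\deg(x)^2$; using $e(G)=\Omega(\ell)$ together with an upper bound on $\sum_x\deg(x)^2$ (as in Claim~\ref{claim:many-edges}, one loses at most a factor related to $e(G)/n$ versus $n$) shows this sum is $\Omega(\ell k^2)$ provided $e(G)\le\binom n2/2$. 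Next I would run a greedy matching procedure on $H$: maintain a set $M$ of disjoint edges and a set $U\subseteq\range k$ of ``used'' indices; at each step pick two fresh indices $i,j\notin U$, expose $\sigma$ on $\{i,j,i+k,j+k\}$ conditioned on the past, and check whether $\{i,j\}\in E(H)$. Using the counting lemma, conditioned on any history with $|U|=o(k)$ — i.e.\ after revealing $o(k)$ vertex-values of $\sigma$ — the probability that the fresh pair forms an $H$-edge is still $\Omega(\ell/k^2)$, because removing $o(k)$ rows/columns from the adjacency matrix destroys only an $o(1)$ fraction of the $\Omega(\ell k^2)$ good $4$-tuples (each vertex lies in $O(k^2)$ of them, and $o(k)\cdot O(k^2)=o(\ell k^2)$ when $\ell=\Omega(k)$; when $\ell=O(k)$ the claim is trivial anyway). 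So over $\Theta(k)$ attempts — say $k/4$ attempts, keeping $|U|\le k/2$ — the number of successes dominates a sum of $\Theta(k)$ indicator variables each with success probability $\Omega(\ell/k^2)$, hence has mean $\Omega(\ell/k)$.

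Finally I would upgrade ``mean $\Omega(\ell/k)$'' to ``$\Omega(\ell/k)$ with probability $1-O(k/\ell)$''. The natural tool is a second-moment / Azuma-type argument on the Doob martingale of the greedy process, or a direct Paley--Zygmund bound: the successes are negatively correlated enough (each attempt uses fresh randomness of $\sigma$ conditioned on the past, and conditioning can only slightly decrease future success probabilities by the counting argument above) that the variance of the number of successes is $O(\ell/k)$, i.e.\ of the same order as the mean; then Chebyshev gives that the number of successes is $\Omega(\ell/k)$ except with probability $O\big((\ell/k)/(\ell/k)^2\big)=O(k/\ell)$, as required. The main obstacle I anticipate is the bookkeeping in the conditional lower bound on success probability — one has to argue carefully that exposing $\sigma$ on a few coordinates at a time (and in a pairing structure, where $i$ and $i+k$ are linked) does not create a bad conditioning that kills the $\Omega(\ell/k^2)$ per-step probability; isolating the $\Omega(\ell k^2)$ ``robust'' $4$-tuple count and tracking how many survive after deleting $O(k)$ vertices is where the real care is needed, but it is ultimately a routine (if slightly tedious) deletion estimate.
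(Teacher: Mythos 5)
Your proposal has a genuine gap: the counting lemma that anchors the whole argument is false. You claim that the number of ordered $4$-tuples $(x,x',y,y')$ with $a_{xy}-a_{xy'}-a_{x'y}+a_{x'y'}\ne 0$ is $\Omega(\ell k^2)$. Consider the complete bipartite graph $K_{s,n-s}$ with $s\approx \ell/n$, so $e(G)\approx \ell$. Here $a_{xy}-a_{xy'}-a_{x'y}+a_{x'y'}=0$ unless $x,x'$ lie in different sides of the bipartition \emph{and} $y,y'$ lie in different sides (if, say, $x,x'$ are on the same side then $a_{xz}=a_{x'z}$ for all $z$ and the expression collapses). The number of good $4$-tuples is therefore $\Theta\big(s^2(n-s)^2\big)=\Theta(\ell^2)$, which is $o(\ell k^2)$ whenever $\ell=o(k^2)$. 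One can verify this directly from your own second-moment calculation: for $K_{s,n-s}$ one has $\sum_x\deg(x)^2=en$, so $\sum_{x,x',y,y'}(a_{xy}-a_{xy'}-a_{x'y}+a_{x'y'})^2 = 8(n^2 e - n\sum\deg^2 + e^2)=8e^2$. This is also the extremal construction that the paper highlights right after stating \cref{thm:almost}, so it is not a pathological corner case.

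With the correct count, a greedy step that exposes a uniformly random pair $(i,j)$ has success probability only $\Theta(\ell^2/k^4)$, and since $n=2k$ you can afford at most $\Theta(k)$ disjoint steps; the expected matching size is then $\Theta(\ell^2/k^3)$, which is $o(\ell/k)$ precisely in the interesting regime $\ell=o(k^2)$. So the argument cannot deliver a matching of size $\Omega(\ell/k)$ by this route, and no concentration argument (Chebyshev, Azuma, Chernoff) can rescue an expectation that is too small. The paper's proof of \cref{claim:matching} avoids this by \emph{not} picking $i,j$ uniformly: it first fixes a favourable pair of base vertices $u,w$ in $V(G)$ — drawn from the set of high-degree vertices in Case 1, or forming an edge of a matching inside the low-degree part in Case 2 — and only then exposes $\sigma^{-1}(u),\sigma^{-1}(w)$ together with the paired slots. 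This choice guarantees a \emph{constant} per-step success probability regardless of $\ell$, and the supply of good base pairs is $\Omega(\ell/k)$, which yields the required matching size. The case split on vertex degrees (which you omit) is exactly what converts the density assumption $e(G)=\Omega(\ell)$ into a constant per-step probability, and it is the essential idea you are missing.
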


Before proving \cref{claim:matching}, we will show how it implies \cref{thm:almost}.

\begin{proof}[Proof of \cref{thm:almost}]
Let $\mathcal{E}$ be the event that $H$ has a matching of size $\Omega(\ell/k)$. We learn from \cref{lem:coefficients} (see the discussion at the end of \cref{subsec:coupling}) that $X$ is a quadratic polynomial in $\gamma \in \Rad^{n/2}$, and the coefficient of $\gamma_i \gamma_j$ is $\frac14 \left( a_{\sigma\left(i\right)\sigma\left(j\right)}-a_{\sigma\left(i\right)\sigma\left(j+k\right)}-a_{\sigma\left(i+k\right)\sigma\left(j\right)}+a_{\sigma\left(i+k\right)\sigma\left(j+k\right)}\right).$ Hence the rank of $X$ (as a polynomial in the $\gamma_i$) is equal to the size of a maximum matching in $H$. Thus $\Pr(X=\ell\big | \mathcal{E}) \le \log^{O(1)}(\ell/k)\sqrt{\frac{k}{\ell}}$, by \cref{thm:MNV}. Combined with \cref{claim:matching}, we obtain 
\[
\Pr(X=\ell) \le \Pr(\overline{\mathcal{E}})+\Pr(X=\ell\big | \mathcal{E}) \le \log^{O(1)}(\ell/k)\sqrt{\frac{k}{\ell}}. \qedhere
\]
\end{proof}

In order to prove \cref{thm:almost}, it remains to show \cref{claim:matching}. As a warm-up, we first sketch the proof of \cref{claim:matching} in the regime where $\ell=\Omega\left(k^{2}\right)$. 
A key observation is that if $a_{vw}=a_{v'w'}\ne a_{v'w}$
(that is, the path $vwv'w'$ alternates between edges and non-edges)
then $a_{vw}-a_{vw'}-a_{v'w}+a_{v'w'}\ne0$. That is to say, edges
in $H$ arise from alternating paths of length 3 in $G$.  
When $\ell=\Omega\left(k^{2}\right)$, we can show that $G$ has $\Omega\left(k^{4}\right)$ alternating 3-paths. Roughly speaking, the reason is that $G$ can be divided
into two parts $V_{1}$ and $V_{2}$ such that all vertices in $V_{1}$
have reasonably high degree and all vertices in $V_{2}$ have reasonably
high non-degree. If there were many non-edges in $V_{1}$ or many
edges in $V_{2}$ this would immediately give us many alternating
3-paths. Otherwise $V_{1}$ is almost a clique and $V_{2}$ is almost
an independent set, in which case almost every pair of edges between
$V_{1}$ and $V_{2}$ gives rise to an alternating 3-path through
$V_{2}$, and almost every pair of non-edges between $V_{1}$ and
$V_{2}$ gives rise to an alternating 3-path through $V_{1}$, and
there must be many alternating 3-paths of at least one of these two
types. Now, if $G$ has $\Omega\left(k^{4}\right)$ alternating 3-paths,
it follows from a concentration inequality that $H$ is very likely
to have $\Omega\left(k^{2}\right)$ edges, and hence a matching
of size $k$.

In the general case this simplistic approach does not suffice, and
the way we find our matching in $H$ will differ slightly depending
on the structure of $G$. Let $U\subseteq\range n$ be the set of
``high-degree'' vertices with degree at least $0.9 n$. We divide the proof of \cref{claim:matching} into two cases: the case when many edges are incident to $U$ will be handled in \cref{subsec:case-1}, and the case where many edges avoid $U$ will be treated in \cref{subsec:case-2}.

\subsection{\label{subsec:case-1}Case 1: many edges are incident to the high-degree
vertices}

First, consider the case where $e\left(G\right)/2$ edges are incident
to $U$. In this case, $2k\left|U\right|\ge e\left(G\right)/2$, so
$\left|U\right|=\Omega\left(\ell/k\right)$. We can in fact assume that $|U|\ge 3$, because if $\ell=O(k)$ the statement of \cref{thm:almost} is trivial. Now consider the following
procedure to iteratively build a matching $M$ in $H$. At step $t$,
let $V_{t}$ be the set of vertices $v\in\range n$ such that the
value of $\sigma^{-1}\left(v\right)$ has not yet been revealed, choose
any distinct $u,w\in U\cap V_{t}$, and reveal the values of $i=\sigma^{-1}\left(u\right)$
and $j=\sigma^{-1}\left(w\right)$. If $i,j\le k$ and the values
of $\sigma\left(i+k\right)$ and $\sigma\left(j+k\right)$, have not
already been revealed, reveal them, and if we find that $\left\{ i,j\right\} $
is an edge in $H$ then we add $\left\{ i,j\right\} $ to $M$.

The above procedure can continue while $4t+2\le\left|U\right|$ (that
is, $t\le\left(\left|U\right|-2\right)/4$). Let $T=\min\left\{ \left(\left|U\right|-2\right)/4,\,0.01n\right\} =\Omega\left(\ell/k\right)$.
We claim that every step $t\le T$ has probability $\Omega\left(1\right)$
of successfully adding an edge to $M$.
\begin{claim}
\label{claim:many-high-prob}For any $t\le T=\Omega\left(\ell/k\right)$,
condition on any outcome of the information revealed before step
$t$. Then the probability that an edge is added to $M$ in step $t$
is at least $0.02$.
\end{claim}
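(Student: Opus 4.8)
The plan is to analyze step $t$ of the greedy procedure conditioned on all previously revealed information, and show that with probability at least $0.02$ the pair $\{i,j\}=\{\sigma^{-1}(u),\sigma^{-1}(w)\}$ both lands ``correctly'' (i.e.\ $i,j\le k$ with the partner indices $i+k,j+k$ fresh) and forms an edge of $H$, i.e.\ the alternating quantity $a_{\sigma(i)\sigma(j)}-a_{\sigma(i)\sigma(j+k)}-a_{\sigma(i+k)\sigma(j)}+a_{\sigma(i+k)\sigma(j+k)}$ is nonzero. First I would observe that, having revealed only $O(T)=O(0.01n)$ many values of $\sigma^{-1}$ before step $t$, there remains a set $V_t$ with $|V_t|\ge 0.9n$ of vertices whose $\sigma^{-1}$-image is unrevealed, and the restriction of $\sigma^{-1}$ to $V_t$ is still a uniformly random injection into the unused index set, which contains at least, say, $0.45n$ indices from $[k]$ and $0.45n$ indices from $\{k+1,\dots,n\}$. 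So with probability $\Omega(1)$ (bounded below by an absolute constant, say $\ge 0.1$) we have $i=\sigma^{-1}(u)\le k$, $j=\sigma^{-1}(w)\le k$, and $i+k$, $j+k$ are both among the unused indices; condition on this good event.

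The key point is then to exhibit the nonzero alternating quantity using the high degrees of $u$ and $w$. Write $u'=\sigma(i+k)$ and $w'=\sigma(j+k)$; under the conditioning these are two \emph{uniformly random distinct} vertices drawn from the remaining unrevealed pool, which still has size $\ge 0.45n$. Now $u\in U$ means $u$ is adjacent to all but at most $0.1n$ vertices, and likewise for $w$. Using the warm-up observation from the sketch --- that if $a_{vw}=a_{v'w'}\ne a_{v'w}$ then the alternating quantity on $(v,w,v',w')$ is nonzero --- I would argue as follows. We already know (after the reduction $e(G)=\Omega(\ell)$ and $e(G)\le\binom n2/2$) that $G$ is not too dense on average, but more importantly $u,w$ have degree $\ge 0.9n$. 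Consider the pair $(u,w')$: since $w'$ is a random vertex and $u$ has at most $0.1n$ non-neighbours, $a_{uw'}=1$ with probability $\ge 0.9$; similarly I want one of the ``diagonal'' entries to differ. The cleanest route is: with probability $\ge 0.8$ over the choice of $u',w'$ we have $a_{uw'}=a_{u'w'}$? --- no; instead I would directly estimate $\Pr\big(a_{uw}-a_{uw'}-a_{u'w}+a_{u'w'}\ne 0\big)$ by noting that this quantity equals $0$ only if the four entries pair up, and that $a_{uw},a_{uw'},a_{u'w}$ are (respectively) $1$ with probability $1$ (if $\{u,w\}\in E$), $\ge 0.9$, $\ge 0.9$, while $a_{u'w'}$ is an essentially unconstrained random entry; a short union-bound computation shows the alternating sum is nonzero with probability bounded below by an absolute constant. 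Multiplying the two constant probabilities gives the claimed $\ge 0.02$.

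The main obstacle I anticipate is the case split on whether $\{u,w\}$ is itself an edge of $G$, and more generally making the final constant honest: if $\{u,w\}\notin E(G)$ then $a_{uw}=0$ and the alternating sum is $-a_{uw'}-a_{u'w}+a_{u'w'}$, which is nonzero unless $a_{u'w'}=a_{uw'}+a_{u'w}$, i.e.\ unless $a_{uw'}=a_{u'w}=0$ and $a_{u'w'}=0$, OR $a_{uw'}+a_{u'w}=a_{u'w'}=1$ with the other two summing to $1$ --- wait, with zero-one entries $-a_{uw'}-a_{u'w}+a_{u'w'}=0$ forces $a_{u'w'}=a_{uw'}+a_{u'w}\in\{0,1\}$, hence $a_{uw'}=a_{u'w}=0$; but $a_{uw'}=0$ has probability $\le 0.1$. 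If $\{u,w\}\in E(G)$ then the sum is $1-a_{uw'}-a_{u'w}+a_{u'w'}$, which vanishes iff $a_{uw'}+a_{u'w}=1+a_{u'w'}$, forcing $a_{uw'}=a_{u'w}=1$ and $a_{u'w'}=1$; this has probability at most $1$, so this direction needs a tiny bit more care --- here I would instead use that $u'$ is random and $G$ has at most half its pairs as edges on average\ldots actually the clean fix is to note $a_{u'w'}=0$ with the complementary probability is not guaranteed, so I would choose to reveal $u'$ (random) first and split on $a_{uw'}$: if $a_{uw'}=0$ (prob.\ $\le 0.1$, ignore) we are in a good situation, and if $a_{uw'}=1$ then the sum becomes $1-1-a_{u'w}+a_{u'w'}=a_{u'w'}-a_{u'w}$, which is nonzero iff $a_{u'w}\ne a_{u'w'}$, and since $u'$ is a uniformly random vertex while $w,w'$ are fixed-ish, a standard counting argument (using that $G$ is neither almost-complete nor almost-empty in the relevant induced sense, which follows from $\ell^*=\Theta(\ell)$ being not too close to the extremes) gives $\Pr(a_{u'w}\ne a_{u'w'})$ bounded below by an absolute constant. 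Assembling these cases carefully is where the real work lies; the probabilistic estimates themselves are routine once the conditioning is set up as above.
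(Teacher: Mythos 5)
Your setup of the conditioning (revealing $i=\sigma^{-1}(u)$, $j=\sigma^{-1}(w)$, getting an $\Omega(1)$ chance that both land in $[k]$ with fresh partner indices, then viewing $u'=\sigma(i+k)$, $w'=\sigma(j+k)$ as random vertices from the remaining pool) matches the paper, but from there your argument diverges and has a genuine gap. You try to estimate $\Pr(a_{uw}-a_{uw'}-a_{u'w}+a_{u'w'}\ne 0)$ by case-splitting on the individual entries, and the crux of your argument in the case $\{u,w\}\in E$, $a_{uw'}=1$ is the claim $\Pr(a_{u'w}\ne a_{u'w'})=\Omega(1)$. This is not justified: $w\in U$ has degree $\ge 0.9n$, and nothing prevents the randomly chosen $w'$ from also being a high-degree vertex with $N(w')$ nearly identical to $N(w)$, in which case $a_{u'w}=a_{u'w'}$ for almost every $u'$. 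Your heuristic that ``$G$ is neither almost-complete nor almost-empty'' is a global density statement ($e(G)\le\binom n2/2$) and does not control $|N(w)\triangle N(w')|$. In addition, your case analysis of when the alternating sum vanishes is incorrect as written: e.g.\ in the $\{u,w\}\notin E$ case, $-a_{uw'}-a_{u'w}+a_{u'w'}=0$ does \emph{not} force $a_{uw'}=a_{u'w}=0$ (it can also hold with $a_{u'w'}=1$ and exactly one of $a_{uw'},a_{u'w}$ equal to $1$), and symmetrically for the $\{u,w\}\in E$ case; there is also a small confusion about whether to reveal $u'$ or $w'$ to control $a_{uw'}$.

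The paper avoids all of this with a sharper structural observation: it suffices for $u'$ to be a neighbour of $w$, $w'$ to be a neighbour of $u$, and $(u',w')$ to be a \emph{non-edge}. Then $a_{u'w}=a_{uw'}=1$, $a_{u'w'}=0$, and the alternating sum equals $a_{uw}-2\ne 0$ \emph{regardless} of whether $\{u,w\}\in E$, so no case split on $a_{uw}$ is needed. The probability of this event is bounded below cleanly: $|N_{V_t}(u)|,|N_{V_t}(w)|\ge 0.8n$ by the degree condition, so there are $\ge 0.6n^2$ candidate ordered pairs $(u',w')$, and since $e(G)\le\binom n2/2$ at most $n^2/2$ of them are edges, leaving $\ge 0.1n^2$ good pairs, i.e.\ probability $\ge 0.1$. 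Multiplying by the $\ge 0.2$ bound for the conditioning gives $0.02$. If you rework your argument to target that specific combinatorial event rather than trying to estimate $\Pr(a_{u'w}\ne a_{u'w'})$ directly, the gap disappears.
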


\begin{proof}
Let $Q_{t}$ be the set of indices $q\in\range k$ such that $\sigma\left(q\right)$ or $\sigma\left(q+k\right)$ have already been revealed in previous steps (that is, $\sigma\left(q\right)\notin V_{t}$
or $\sigma\left(q+k\right)\notin V_{t}$). Observe that $\left|Q_{t}\right|\le8T$.
Now, reveal the values of $i=\sigma^{-1}\left(u\right)$ and $j=\sigma^{-1}\left(w\right)$.
The probability that $i,j\le k$ and $\sigma\left(i+k\right),\sigma\left(j+k\right)\in V_{t}$
is at least 
\begin{equation}
\frac{k-\left|Q_{t}\right|}{n}\cdot\frac{k-\left|Q_{t}\right|-1}{n}\ge\left(\frac{k-8T-1}{n}\right)^{2}\ge0.2.\label{eq:many-high-condition}
\end{equation}
Condition on such an outcome of $i,j$. Note that $\left|V_{t}\right|\ge n-4t\ge0.9n$,
and recall that $u$ and $w$ have degree at least $0.9n$
(as vertices in $U$). It follows that $\left|N_{V_{t}}\left(u\right)\right|,\left|N_{V_{t}}\left(w\right)\right|\ge0.8n$.
Let $P\subseteq N_{V_{t}}\left(w\right)\times N_{V_{t}}\left(u\right)$
be the set of distinct ordered pairs of vertices $\left(u',w'\right)$
with $u'\in N_{V_{t}}\left(w\right)\setminus\left\{ u\right\} $
and $w'\in N_{V_{t}}\left(u\right)\setminus\left\{ w\right\} $,
so that $\left|P\right|\ge(0.8n-1)\left(0.8n-2\right)\ge0.6n^{2}$.
Recall that we are assuming $e\left(G\right)\le\binom{n}{2}/2$, so
at most $n^{2}/2$ of the pairs in $P$ are edges of $G$. Let $P'$
be the set of pairs of $P$ which are not edges, so that $\left|P'\right|\ge0.1n^{2}$.
Observe that if $\left(\sigma\left(i+k\right),\sigma\left(j+k\right)\right)\in P'$
then the vertices $\sigma\left(i\right),\sigma\left(j+k\right),\sigma\left(i+k\right),\sigma\left(j\right)$
form an alternating path, so
\[
a_{\sigma\left(i\right)\sigma\left(j\right)}-a_{\sigma\left(i\right)\sigma\left(j+k\right)}-a_{\sigma\left(i+k\right)\sigma\left(j\right)}+a_{\sigma\left(i+k\right)\sigma\left(j+k\right)}=a_{\sigma\left(i\right)\sigma\left(j\right)}-2\ne0,
\]
meaning that $\left\{ i,j\right\} $ is an edge of $H$ and can be
added to $M$. The probability of this is at least $\left|P'\right|/n^{2}\ge0.1$.
Recall that this is a conditional probability, so we multiply by \cref{eq:many-high-condition}
for the desired result.
\end{proof}
\cref{claim:many-high-prob} implies that the eventual size of $M$ stochastically
dominates the binomial distribution $\Bin\left(T,0.02\right)$, so
by the Chernoff bound we have $\left|M\right|\ge0.01T$
with probability $1-e^{-\Omega\left(\ell/k\right)}$.

\subsection{\label{subsec:case-2}Case 2: many edges avoid the high-degree vertices}

Let $\overline{U}=\range n\setminus U$; it remains to consider the
case where $\overline{U}$ induces at least $e\left(G\right)/2=\Omega\left(\ell\right)$
edges. Observe that in $G\left[\overline{U}\right]$ we can greedily
find a matching of size at least $s:=e\left(G\left[\overline{U}\right]\right)/k=\Omega\left(\ell/k\right)$;
let $S\subseteq\range n^{2}$ be such a matching. Now consider the
following procedure to iteratively build a matching $M$ in $H$.
At step $t$, let $V_{t}$ be the set of vertices $v\in\range n$
such that the value of $\sigma^{-1}\left(v\right)$ has not yet been
revealed, choose any $\left(u,v\right)\in S\cap V_{t}^{2}$, and reveal
the values of $i=\sigma^{-1}\left(u\right)$ and $j=\sigma^{-1}\left(w\right)$.
If $i,j\le k$ and the values of $\sigma\left(i+k\right)$ and $\sigma\left(j+k\right)$,
have not already been revealed, reveal them, and if we find that $\left\{ i,j\right\} $
is an edge in $H$ then we add $\left\{ i,j\right\} $ to $M$.

The above procedure can continue while $4t+1\le\left|S\right|$ (that
is, $t\le\left(\left|S\right|-1\right)/4$). Let $T=\min\left\{ \left(\left|S\right|-1\right)/4,\,0.01n\right\} =\Omega\left(\ell/k\right)$.
As in \cref{subsec:case-1}, we claim that every step $t\le T$ has
probability $\Omega\left(1\right)$ of successfully adding an edge
to $M$.
\begin{claim}
\label{claim:many-avoid-prob}For any $t\le T=\Omega\left(\ell/k\right)$,
condition on any outcome of the information revealed before step
$t$. Then the probability that an edge is added to $M$ in step $t$
is at least $0.0004$.
\end{claim}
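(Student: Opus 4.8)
The plan is to run the same revealing procedure and the same probability estimate as in \cref{claim:many-high-prob}, with ``high degree'' replaced by ``many non-neighbours''. The crucial new observation is that the pair $(u,w)\in S$ is an edge of $G$ with both endpoints in $\overline U$, so $a_{uw}=1$ and each of $u,w$ has at least $n-1-0.9n=0.1n-1$ non-neighbours. I would then isolate the following robust sufficient event for step $t$ to succeed: having revealed $i=\sigma^{-1}(u)$ and $j=\sigma^{-1}(w)$ with $i,j\le k$ and with positions $i+k,j+k$ not yet revealed, suppose in addition that $\sigma(j+k)$ is a non-neighbour of $u$ and $\sigma(i+k)$ is a non-neighbour of $w$. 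Then
\[
a_{\sigma(i)\sigma(j)}-a_{\sigma(i)\sigma(j+k)}-a_{\sigma(i+k)\sigma(j)}+a_{\sigma(i+k)\sigma(j+k)}=a_{uw}-0-0+a_{\sigma(i+k)\sigma(j+k)}\ge 1>0,
\]
so $\{i,j\}$ is an edge of $H$ and is added to $M$. It therefore suffices to bound the probability of this event below by an absolute constant.

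For this I would argue exactly as in \cref{claim:many-high-prob}. First let $Q_t\subseteq\range k$ be the set of indices $q$ for which $\sigma(q)$ or $\sigma(q+k)$ has already been revealed; since each of the $t\le T$ earlier steps reveals at most four values, $|Q_t|=O(T)\le 0.08n$. Revealing $i=\sigma^{-1}(u)$ and then $j=\sigma^{-1}(w)$ (distinct positions, and if both lie in $\range k$ then $i,j,i+k,j+k$ are automatically distinct positions), the probability that $i,j\le k$ and $i,j\notin Q_t$ --- equivalently, that $i,j\le k$ and positions $i+k,j+k$ are still unrevealed --- is at least $\left((k-|Q_t|-1)/n\right)^2$, which is bounded below by an absolute constant, exactly as in \cref{eq:many-high-condition} (say $0.2$). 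Conditioning on such $i,j$, the pool of unrevealed vertices is $V_t\setminus\{u,w\}$, of size $\ge 0.9n-2$, and $(\sigma(i+k),\sigma(j+k))$ is uniform over ordered pairs of distinct vertices of this pool. Since $u$ has $\ge 0.1n-1$ non-neighbours, of which only $O(T)$ lie outside the pool (and $w$ is a neighbour of $u$, hence not lost), the pool contains $\ge 0.05n$ non-neighbours of $u$, and likewise for $w$; hence the probability that $\sigma(j+k)\not\sim u$ and $\sigma(i+k)\not\sim w$ is bounded below by a second absolute constant (say $0.002$). Multiplying the two conditional probabilities gives the claimed bound $0.0004$.

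I do not expect a genuine obstacle here, since this is essentially a transcription of \cref{claim:many-high-prob}; the only point requiring care is the choice of the sufficient event above --- exploiting that $u,w\in\overline U$ have many non-neighbours, rather than that $u,w\in U$ have many neighbours --- together with checking that every ``already-revealed'' set has size $O(T)=O(\ell/k)$ with $T\le 0.01n$, so that all the densities appearing stay bounded below by absolute constants independent of $G$. Finally, as in Case~1, \cref{claim:many-avoid-prob} will imply that $|M|$ stochastically dominates $\Bin(T,0.0004)$, so by the Chernoff bound $|M|=\Omega(T)=\Omega(\ell/k)$ with probability $1-e^{-\Omega(\ell/k)}$, which completes Case~2 and hence the proof of \cref{claim:matching}.
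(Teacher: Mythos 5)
Your proof is correct and follows essentially the same route as the paper's: you isolate the same sufficient event (that $\sigma(j+k)$ is a non-neighbour of $u$ and $\sigma(i+k)$ is a non-neighbour of $w$, yielding $1+a_{\sigma(i+k)\sigma(j+k)}\ne 0$ since $a_{uw}=1$), obtain the same two conditional probability bounds $0.2$ and $0.002$, and multiply. The only differences are cosmetic choices of intermediate constants in bounding $|V_t|$ and the non-neighbour pool; the paper tracks these slightly more tightly but arrives at the same numbers.
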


\begin{proof}
Reveal the values of $i=\sigma^{-1}\left(u\right)$ and $j=\sigma^{-1}\left(w\right)$. As in Case 1, the probability that $i,j\le k$ and $\sigma\left(i+k\right),\sigma\left(j+k\right)\in V_{t}$
is at least $\left(\left(k-8T-1\right)/n\right)^{2}\ge0.2$. Condition
on such an outcome of $i,j$. Note that $\left|V_{t}\right|\ge n-4t\ge0.95n$,
and recall that $u$ and $w$ have degree at most $0.9n$,
so $\left|V_{t}\setminus N\left(u\right)\right|,\left|V_{t}\setminus N\left(w\right)\right|\ge0.05n$.
Let $P\subseteq V_{t}^{2}$ be the set of distinct ordered pairs of
vertices $\left(u',w'\right)$ with $u'\in\left(V_{t}\setminus N\left(u\right)\right)\setminus\left\{ u,w\right\} $
and $w'\in\left(V_{t}\setminus N\left(w\right)\right)\setminus\left\{ u,w\right\} $,
so that $\left|P\right|\ge(0.05n-2)\left(0.05n-3\right)\ge0.002n^{2}$.
Observe that if $\left(\sigma\left(j+k\right),\sigma\left(i+k\right)\right)\in P$
then the vertices $\sigma\left(i+k\right),\sigma\left(j\right),\sigma\left(i\right),\sigma\left(j+k\right)$
form an alternating path and
\[
a_{\sigma\left(i\right)\sigma\left(j\right)}-a_{\sigma\left(i\right)\sigma\left(j+k\right)}-a_{\sigma\left(i+k\right)\sigma\left(j\right)}+a_{\sigma\left(i+k\right)\sigma\left(j+k\right)}=1+a_{\sigma\left(i+k\right)\sigma\left(j+k\right)}\ne0,
\]
meaning that $\left\{ i,j\right\} $ is an edge of $H$ and can be
added to $M$. The probability of this is at least $\left|P\right|/n^{2}\ge0.002$.
\end{proof}
As in \cref{subsec:case-1}, it follows that the eventual size of $M$
stochastically dominates the binomial distribution $\Bin\left(T,0.0004\right)$,
so by the Chernoff bound we have $\left|M\right|\ge0.0002T$
with probability $1-e^{-\Omega\left(\ell/k\right)}$.

\section{\label{sec:1-eps}``Weak'' anticoncentration in hypergraphs}

\cref{thm:1-eps} will be an almost immediate consequence of \cref{lem:hyp-anticoncentration}.
The only combinatorial fact we need is as follows.
\begin{lem}
\label{lem:non-constant}The following holds for any $r$. Let $G$
be an $r$-graph on $2k$ vertices. Then either $G$
induces a clique on $k$ vertices, or an independent set on $k$ vertices,
or else it induces two $k$-vertex subgraphs with different numbers
of edges.
\end{lem}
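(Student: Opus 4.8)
The plan is to argue by contradiction: suppose every $k$-vertex subset of $G$ induces the same number of edges, and show that this forces $G$ to be either a clique or an independent set (i.e. the third alternative's failure collapses into the first two). The key observation is a \emph{local exchange} principle. Fix any two vertices $u,v$ and any $(k-1)$-set $W\subseteq V(G)\setminus\{u,v\}$. Since $W\cup\{u\}$ and $W\cup\{v\}$ are both $k$-sets, they must induce the same number of edges, which means $\deg_W(u)=\deg_W(v)$, where $\deg_W(\cdot)$ counts edges of $G$ containing the given vertex and otherwise lying inside $W$. The first step is to leverage this to show that $G$ is ``degree-regular on every window'': for any fixed vertex set and any two completions, the counts match.

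Next I would bootstrap this to pin down the edge set. The natural approach is to show that if $G$ has at least one edge and at least one non-edge (one $r$-set that is an edge and one that is not), then one can find two $k$-sets with different edge counts, contradicting the assumption; combined with $|V(G)|=2k\ge k$, this yields that $G$ is complete or empty on $k$ vertices, in fact on all $2k$. Concretely: suppose $e=\{v_1,\dots,v_r\}\in E(G)$ and $f=\{w_1,\dots,w_r\}\notin E(G)$. By swapping the vertices of $f$ into a $k$-set one at a time (each swap replacing one vertex of a current $k$-set by a fresh one, staying within the $2k$ available vertices — here is where $|V(G)|=2k$ is used to guarantee enough room), and tracking how the induced edge count changes, one reaches a situation where a single vertex substitution changes the count, since the ``endpoints'' of the swap path straddle an edge/non-edge. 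This is essentially the statement that a connected exchange graph on $k$-subsets cannot have a locally constant non-constant function, so the function $A\mapsto e(G[A])$ being globally constant forces the underlying ``difference'' quantities to vanish, i.e. all $r$-sets are edges or all are non-edges.

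The main obstacle I expect is bookkeeping the exchange argument cleanly for general $r$: a single vertex swap can change many edges at once (all edges through the swapped vertex within the $k$-set), so one cannot naively say ``the count changes by the degree difference.'' The fix is to set up the swap sequence so that at the critical step the two $k$-sets differ in exactly one vertex and the symmetric difference of their edge sets is controlled — e.g. choose the $(k-1)$-set $W$ so that $\deg_W(x)$ genuinely differs between the two candidate vertices $x$, which is possible precisely when $G$ is neither complete nor empty (pick $W$ to contain $r-1$ vertices of an edge $e$ but so that the corresponding $r$-set with the other candidate is a non-edge). Once that configuration is located, the contradiction with ``all $k$-sets induce equally many edges'' is immediate. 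The $r=2$ case is the cleanest illustration and can be stated first as a sanity check before handling the general uniformity.
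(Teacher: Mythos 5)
Your opening observation is sound and matches the starting point of the paper's argument: if every $k$-subset induces the same number of edges, then for any $(k-1)$-set $W$ and any $u,v\notin W$ one has $\deg_W(u)=\deg_W(v)$, where $\deg_W(x)$ counts edges through $x$ whose remaining $r-1$ vertices lie in $W$. But from that point on the proposal is essentially circular and the claimed ``fix'' does not actually close the gap.

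The issue is that $\deg_W(x)$ is a \emph{sum} over all $(r-1)$-subsets of $W$, so controlling a single summand (``pick $W$ to contain $r-1$ vertices of an edge $e$ but so that the corresponding $r$-set with the other candidate is a non-edge'') does not force $\deg_W(u)\ne\deg_W(v)$: the contribution of your chosen edge/non-edge pair can be cancelled by contributions from other $(r-1)$-subsets of $W$. Already for $r=2$: take $W\ni a$ where $\{a,b\}$ is an edge and $\{a,c\}$ is not; then $\deg_W(b)$ counts $a$ but $b$'s neighbours elsewhere in $W$ could be fewer than $c$'s, so $\deg_W(b)=\deg_W(c)$ is entirely possible. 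Saying ``one reaches a situation where a single vertex substitution changes the count'' assumes exactly what you are trying to prove, and ``a connected exchange graph cannot have a locally constant non-constant function'' is vacuous here since the function is globally constant by hypothesis. What's missing is a genuine argument that the system of constraints $\deg_W(u)=\deg_W(v)$ forces all $r$-sets to be edges or all to be non-edges.

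The paper supplies precisely this missing argument, and by a different route: it shows by induction on $s\le k$ that $\deg(S)$ (the number of edges containing $S$) is constant over all $s$-sets $S$. The inductive step expresses the number of edges meeting $S$ via inclusion--exclusion as $\sum_{i=1}^{s-1}(-1)^{i+1}\binom{s}{i}d_i+(-1)^{s+1}\deg(S)$; since the number of edges meeting $S$ equals $e(G)-e(V\setminus S)$ and, by averaging over $k$-subsets of $V\setminus S$ (here $|V\setminus S|\ge k$ is used), the quantity $e(V\setminus S)$ is the same for all $s$-sets $S$, one concludes $\deg(S)$ is constant. Taking $s=r$ gives the result. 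You would need to replicate some version of this inclusion--exclusion/averaging step to repair the proposal; the single swap constraint alone, without the induction over set sizes, does not yield the conclusion directly.
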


\begin{proof}
We can assume $k\ge r$, because otherwise $G$ trivially induces
an independent set on $k$ vertices. We will assume that every $k$-vertex
subset of $G$ induces the same number of edges, and prove that $G$
is a clique or independent set.

We claim that for all $s\le k$, $\deg\left(S\right)$ takes a constant
value among vertex subsets $S$ of size $s$. This will
imply the desired result, because if $S$ has $r$ vertices, then
$\deg\left(S\right)$ is either zero or one, depending on whether
$S$ is an edge in $G$. We prove our desired claim by induction on
$s$, so assume it holds for all sizes less than some $s$. For $i<s$,
let $d_{i}$ be the common value of $\deg\left(S\right)$ among $S$
of size $i$.

Now, let $\st(v)$ be the set of edges of $G$ which contain a vertex $v$. For a set $S$ of $s$ vertices, by the inclusion-exclusion principle, the number of edges of $G$
which intersect $S$ is
\begin{align*}
\left|\bigcup_{v\in S}\st\left(v\right)\right| & =\sum_{i=1}^{s}\left(-1\right)^{i+1}\left(\sum_{S'\in \binom{S}{i}}\;\left|\bigcap_{v\in S'}\st\left(v\right)\right|\right)
%\\ & 
=\sum_{i=1}^{s-1}\left(-1\right)^{i+1}\binom{s}{i}d_{i}+\left(-1\right)^{s+1}\deg\left(S\right).
\end{align*}
So, if we had $S_{1},S_{2}\subseteq V$ with $\left|S_{1}\right|=\left|S_{2}\right|=s$
and $\left(-1\right)^{s+1}\deg\left(S_{1}\right)<\left(-1\right)^{s+1}\deg\left(S_{2}\right)$,
it would imply that $e\left(V\setminus S_{1}\right)>e\left(V\setminus S_{2}\right)$.
Since we are assuming $k\ge s$ and $\left|V\right|=2k$, we would
have $\left|V\setminus S_{1}\right|=\left|V\setminus S_{2}\right|\ge k$,
so by averaging there would be $k$-vertex subsets $U_{1}\subseteq V\setminus S_{1}$
and $U_{2}\subseteq V\setminus S_{2}$ such that $e\left(U_{1}\right)>e\left(U_{2}\right)$.
This would be a contradiction.
\end{proof}
Now we can prove \cref{thm:1-eps}. Similarly to the proof of \cref{thm:almost},
by monotonicity it suffices to show that $\Pr\left(X_{G,k}=\ell\right)\le1-\varepsilon$,
for some $\varepsilon$ depending only on $r$, whenever $G$ is an
$r$-graph on $n=2k$ vertices. If $X_{G,k}$ is identically
equal to $0$ or $\binom{n}{k}$ then we are done, and otherwise,
by \cref{lem:non-constant}, $X_{G,k}$ must be supported on at least
two values (meaning that it is not a constant).

Then, note that we can express $X_{G,k}$ in the form $f\left(\x\right)$,
for $f$ a polynomial of degree at most $r$ and $\x\in\hyp\left(n,k\right)$.
The desired result therefore follows from \cref{lem:hyp-anticoncentration}.

\section{\label{sec:hypergraph-sqrt}Anticoncentration in dense 3-graphs}

In this section we prove \cref{thm:hypergraph-sqrt}. Let $G$ be a 3-graph on the vertex set $\range n$, for $n=2k$; as
in \cref{sec:almost}, it suffices to prove that $\Pr\left(X_{G,k}=\ell\right)\le\log^{O\left(1\right)}k/\sqrt{k}$.
Also, with the same arguments as in \cref{claim:many-edges}, we can
assume that $\min\left\{ e\left(G\right),\binom{n}{3}-e\left(G\right)\right\} =\Omega\left(n^{3}\right)$.

Now, we express $X=X_{G,k}$ as 
\[
\sum_{1\le x<y<z\le n}a_{xyz}\xi_{x}\xi_{y}\xi_{z},
\]
where $a_{xyz}=\one_{\left\{ x,y,z\right\} \in E\left(G\right)}$
and $\x\in\hyp\left(n,n/2\right)$. One might hope that \cref{conj:hypergraphs}
would follow from a 3-graph generalisation of the arguments in \cref{thm:almost}.
Indeed, it would suffice to show that $G$ has $\Omega\left(n^{6}\right)$
``good'' 6-tuples of vertices $\left(x,x',y,y',z,z'\right)$ such
that
\begin{equation}
a_{xyz}-a_{xyz'}-a_{xy'z}-a_{x'yz}+a_{xy'z'}+a_{x'yz'}+a_{x'y'z}-a_{x'y'z'}\label{eq:3-form}
\end{equation}
is nonzero. Unfortunately, in contrast with the 2-uniform case in
\cref{thm:almost}, there exist 3-graphs $G$ satisfying $\min\left\{ e\left(G\right),\binom{n}{3}-e\left(G\right)\right\} =\Omega\left(n^{3}\right)$
which have no good 6-tuple at all. It will be useful to classify all
such 3-graphs, which we will do after making some definitions. Fix
a set $\left\{ x,x',y,y',z,z'\right\} $ of size 6, and let $\F$
be the set of all 3-graphs on this vertex set such that the expression
in \cref{eq:3-form} is nonzero. We say that a 3-graph is \emph{$\F$-free}
if it induces no 3-graph from $\F$. Also, we define a family of $\F$-free
3-graphs as follows. For two disjoint sets $A$ and $B$ and a set
of disjoint pairs $M\subseteq A\times B$, let $G_{A,B,M}$ be the
3-graph on the vertex set $A\cup B$, whose edges are the triples
which intersect both $A$ and $B$, except those triples which include
a pair from $M$.
\begin{lem}
\label{lem:3-classification}Consider an $\F$-free $n$-vertex 3-graph
$G$ with $\min\left\{ e\left(G\right),\binom{n}{3}-e\left(G\right)\right\} =\Omega\left(n^{3}\right)$.
Then, provided $n$ is sufficiently large, $G$ or its complement
is of the form $G_{A,B,M}$, for some partition $A\cup B$ of the
vertex set of $G$ and some set of disjoint pairs $M\subseteq A\times B$.
\end{lem}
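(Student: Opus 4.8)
The plan is to show that $\F$-freeness is a very rigid local condition which, combined with the density hypothesis, forces the bipartite-like structure described. First I would unpack what $\F$-freeness means on small vertex sets. The key observation is that the expression in \cref{eq:3-form} is (up to sign) the ``second difference'' of the indicator $a$ in the three coordinate-pairs $\{x,x'\}$, $\{y,y'\}$, $\{z,z'\}$; it vanishes for \emph{all} choices of the six (distinct) vertices precisely when, roughly speaking, the presence of a triple depends on each of its vertices only through which ``side'' the vertex lies on, together with a bounded amount of pairwise exceptional data. So I would first prove a clean ``if many vertices behave alike then they behave identically'' statement: using a Ramsey-type / regularity argument, I would find a large set of vertices that are pairwise indistinguishable with respect to $G$ (i.e.\ swapping any two of them fixes $G$), and show that $\F$-freeness propagates this to a global structure. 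Concretely, I expect to invoke the theorem of Fox and Sudakov on unavoidable patterns (mentioned in the outline in \cref{sec:outline}) to locate, inside any sufficiently large $\F$-free $G$, a partition of almost all vertices into two homogeneous-type classes on which $G$ looks exactly like some $G_{A',B',\emptyset}$ up to a bounded exceptional set.

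The second main step is to upgrade ``almost all vertices'' to ``all vertices''. Here I would use the induced hypergraph removal lemma together with $\F$-freeness: since $G$ is $\F$-free, it contains \emph{zero} induced copies of every member of $\F$, and removal-type stability then says $G$ cannot merely be close to a $G_{A,B,M}$, it must actually be one of them. More carefully, having the bulk of $G$ agree with $G_{A',B',M'}$, I would examine each remaining vertex $v$ and each pair of vertices $u,u'$ from the two bulk classes; if $v$'s adjacency pattern to the bulk were not itself of the allowed ``side + exceptional-pair'' form, one could select five further vertices from the (large) bulk classes so that $v$ together with them induces a member of $\F$, contradicting $\F$-freeness. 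This pins down the side of each exceptional vertex and forces its deviations from the generic pattern to be confined to a set $M$ of \emph{disjoint} pairs in $A\times B$ (disjointness of the pairs is exactly what a third application of the vanishing second difference gives: two exceptional pairs sharing a vertex would create a nonzero \cref{eq:3-form}).

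Finally I would handle the density bookkeeping: the hypothesis $\min\{e(G),\binom n3 - e(G)\}=\Omega(n^3)$ is used to rule out the degenerate cases where one of $A,B$ is tiny (in which case $G$ or its complement would be too sparse to be $G_{A,B,M}$ with both parts linear), and also dictates whether it is $G$ itself or its complement that takes the stated form --- taking complements swaps the roles of ``edge'' and ``non-edge'' in the definition of $G_{A,B,M}$, so exactly one orientation will match. The main obstacle I anticipate is the second step: turning the approximate (``$1-o(1)$ fraction of vertices'') structure coming from the regularity/unavoidable-patterns machinery into an \emph{exact} structural statement with no error vertices at all, and in particular proving that the exceptional pairs $M$ must be pairwise disjoint rather than forming, say, a small dense cluster. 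I expect this to require a somewhat careful case analysis of how a would-be exceptional vertex or a pair of overlapping exceptional pairs interacts with five generic vertices drawn from the two large classes --- precisely the ``slightly complicated case analysis'' flagged in \cref{sec:outline}.
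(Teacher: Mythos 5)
There is a genuine gap. Your appeal to the induced hypergraph removal lemma inside the proof of this lemma is misplaced: the removal lemma says that a 3-graph with \emph{few} induced copies of each member of $\F$ can be made $\F$-free by altering $o(n^3)$ triples. Here the hypothesis is already that $G$ is $\F$-free --- it has \emph{zero} induced copies --- so the removal lemma is vacuous and carries no structural information. (In the paper, the removal lemma is invoked in the \emph{application} of this lemma in \cref{sec:hypergraph-sqrt}, to reduce the ``$o(n^6)$ bad $6$-tuples'' case to the $\F$-free case; it plays no role in the proof of the lemma itself.) Likewise, \cref{lem:fox-sudakov} only produces disjoint sets $V_1,V_2,V_3$ of \emph{constant} size $t=t(\varepsilon)$, not a bipartition of almost all vertices, so your claim of extracting ``a partition of almost all vertices into two homogeneous-type classes'' from the Fox--Sudakov machinery over-reaches.

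The missing idea is how to pass from the constant-size structure to a global one. The paper does this by first showing (\cref{lem:3-classification-base-case}) that \cref{lem:fox-sudakov} with $t=5$ forces $G$ to contain an induced $K_{5,5}^{(3)}$ or its complement --- a $10$-vertex exact copy of $G_{A,B,\emptyset}$ --- and then growing the structure one vertex at a time (\cref{lem:3-classification-inductive-step}): if $G-v=G_{A,B,M}$ with $|A|,|B|\ge 5$, a careful case analysis using $\F$-freeness shows $G$ itself equals $G_{A',B',M'}$ with $A'\supseteq A$, $B'\supseteq B$, $M'\supseteq M$. Your final paragraph (``examine each remaining vertex $v$ \ldots and force its deviations to be confined to a set $M$ of disjoint pairs'') is in the right spirit and is essentially this inductive step, but as written it presupposes that a large ``bulk'' structure already exists, which neither the removal lemma nor Fox--Sudakov provides. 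You should instead set up the argument as an induction on the number of vertices included in the structured set, starting from the constant-size base, and make precise the bookkeeping that ensures each step preserves disjointness of $M$.
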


The proof of \cref{lem:3-classification} involves some somewhat complicated
casework, so we defer it to \cref{subsec:3-classification}.

Now, under the coupling in \cref{fact:coupling}, $X$ is a function
of a random permutation $\sigma:\range n\to\range n$ and a random
vector $\g\in\Rad^{n/2}$. By \cref{lem:coefficients}, for any outcome
of $\sigma$, $X$ is a polynomial in the $\gamma_{i}$ of degree
at most 3. The coefficient $g_{ijq}$ of $\gamma_{i}\gamma_{j}\gamma_{q}$
is
\[
\sum_{\bb\in\left\{ 0,1\right\} ^{3}}\left(-1\right)^{\left|\bb\right|}a_{ \sigma\left(i+k\b_{1}\right)\,\sigma\left(j+k\b_{2}\right)\,\sigma\left(q+k\b_{3}\right) }
\]
(note that $|g_{ijq}|\le 4$) and the coefficient $g_{ij}$ of $\gamma_{i}\gamma_{j}$ is
\[
\deg\left(\sigma\left(i\right),\sigma\left(j\right)\right)-\deg\left(\sigma\left(i+k\right),\sigma\left(j\right)\right)-\deg\left(\sigma\left(i\right),\sigma\left(j+k\right)\right)+\deg\left(\sigma\left(i+k\right),\sigma\left(j+k\right)\right).
\]
Let $H$ be the random 3-graph on the vertex set $\range{n/2}$ with
an edge $\left\{ i,j,q\right\} $ whenever $g_{ijq}\ne0$. First suppose that $G$ contains $\Omega\left(n^{6}\right)$ induced
subgraphs from $\F$, and let $N=\Omega\left(n^{6}\right)$ be the
number of ordered 6-tuples $\left(x,x',y,y',z,z'\right)$ such that
the expression in \cref{eq:3-form} is nonzero. Then 
\[
\E e\left(H\right)=\frac n2\left(\frac n2-2\right)\left(\frac n2-4\right)\frac{N}{n\left(n-1\right)\dots\left(n-5\right)}=\Theta\left(n^{3}\right)
\]
Also, note that modifying $\sigma$ by a transposition changes $e\left(H\right)$
by at most $2\binom{n}{2}$. By a McDiarmid-type concentration inequality
for random permutations (see for example \cite[Section~3.2]{McD98}),
we therefore have
\[
\Pr\left(e\left(H\right)\le\E e\left(H\right)/2\right)=\exp\left(-\Omega\left(\frac{\left(n^{3}\right)^{2}}{n\cdot\binom{n}{2}^{2}}\right)\right)=e^{-\Omega\left(n\right)}.
\]
But if $e\left(H\right)\ge\E e\left(H\right)/2=\Omega\left(n^{3}\right)$
then we can greedily find a matching of size $\Omega\left(n\right)$
in $H$, and \cref{thm:MNV} finishes the proof.

It remains to consider the case where $G$ contains $o\left(n^{6}\right)$
induced subgraphs from $\F$. In this case, by the induced hypergraph
removal lemma (see \cite[Theorem~6]{RS09}), we can add and remove
$o\left(n^{3}\right)$ edges from $G$ to obtain a 3-graph with no induced subgraphs from $\F$. By \cref{lem:3-classification},
we can assume this 3-graph is of the form $G_{A,B,M}$. Note that only $O(|M|n)=o(n^3)$ edges of $G_{A,B,M}$ can involve a pair in $M$, so we can actually obtain $G':=G_{A,B,\emptyset}$ by adding and removing
$o\left(n^{3}\right)$ edges from $G$. Recall that $\min\left\{ e\left(G\right),\binom{n}{3}-e\left(G\right)\right\} =\Omega\left(n^{3}\right)$,
so we must have $\left|A\right|,\left|B\right|=\Omega\left(n\right)$.
Also observe that if $x,y'\in A$ and $x',y\in B$, then
\begin{equation}
\left|\deg_{G'}\left(x\vphantom',y\right)-\deg_{G'}\left(x',y\right)-\deg_{G'}\left(x,y'\right)+\deg_{G'}\left(x',y'\right)\right|=\left|\vphantom{\deg_{G'}\left(x',y\right)}2\left(n-2\right)-\left|A\right|-\left|B\right|\right|=n-4.\label{eq:ideal-2-degrees}
\end{equation}
There are $\Omega\left(n^{4}\right)$ such choices of $\left(x,x',y,y'\right)$.
We claim that in fact there are $N'=\Omega\left(n^{4}\right)$ choices
of $\left(x,x',y,y'\right)$ such that
\begin{equation}
\deg_{G}\left(x,y\right)-\deg_{G}\left(x',y\right)-\deg_{G}\left(x,y'\right)+\deg_{G}\left(x',y'\right)\ge n/2.\label{eq:threshold-2-degrees}
\end{equation}
Indeed, recall that $G'$ is obtained from $G$ by adding and removing
$o\left(n^{3}\right)$ edges, and adding or removing an edge from
$G$ can affect the value of the above expression by at most 1, for
at most $O\left(n^2\right)$ 4-tuples $\left(x,x',y,y'\right)$. Therefore there can be only $o(n^4)$ 4-tuples which satisfy \cref{eq:ideal-2-degrees} but not \cref{eq:threshold-2-degrees}.

Now, let $H'$ be the random graph on the vertex set $\range{n/2}$
with an edge $\left\{ i,j\right\} $ if $g_{ij}\ge n/2$. We have
\[
\E e\left(H'\right)=\frac n2\left(\frac n2-2\right)\frac{N'}{n\left(n-1\right)\left(n-2\right)\left(n-3\right)}=\Theta\left(n^{2}\right),
\]
and, as in the previous case, by a concentration inequality
$e\left(H'\right)=\Omega\left(n^{2}\right)$ with probability ${1-e^{-\Omega\left(n\right)}}$,
in which case $H'$ has a matching of size $m=\Omega\left(n\right)$.
The desired result then follows from \cref{cor:second-degree-MNV},
with $d=3$ and $r=\min\left\{ m,n/8\right\} $.

\subsection{\label{subsec:3-classification}Characterising $\protect\F$-free
3-graphs}

In this subsection we prove \cref{lem:3-classification}, as an inductive
consequence of the following two lemmas. Let $K_{a,b}^{\left(3\right)}$
be the complete bipartite 3-graph with parts of sizes $a$ and $b$
(meaning that the vertex set can be partitioned into two parts of
sizes $a$ and $b$, and the edges are those triples which intersect
both parts).
\begin{lem}
\label{lem:3-classification-base-case}Under the conditions of \cref{lem:3-classification},
$G$ contains a copy of $K_{5,5}^{\left(3\right)}$ or its complement.
\end{lem}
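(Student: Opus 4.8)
The plan is to exploit the $\F$-free hypothesis --- which is extremely rigid --- to show that $G$ is essentially a blow-up of a 3-graph of bounded complexity, and then use the two-sided density bound to locate the desired configuration. First, observe that $\F$ is closed under complementation (replacing each $a_S$ by $1-a_S$ negates the alternating sum defining $\F$), so $\bar G$ is also $\F$-free and satisfies the same hypotheses; hence it suffices to produce an induced copy of $K_{5,5}^{(3)}$ \emph{or} of $\overline{K_{5,5}^{(3)}}$, and we may complement $G$ whenever convenient. Since $n$ is large, Ramsey's theorem for $3$-graphs (or the Fox--Sudakov unavoidable-patterns theorem) supplies an induced homogeneous set of size $5$; complementing if necessary, we may assume $G$ contains an independent set $I$ with $|I|=5$. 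Write $a_{xyz}:=\one_{\{x,y,z\}\in E(G)}$.

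Next I would run a bootstrapping argument. Apply the defining identity of $\F$ to the six vertices $I\cup\{o\}$, for an arbitrary $o\notin I$: the four terms of the alternating sum not involving $o$ each have all three indices in $I$, hence equal $0$ because $I$ is independent, and what remains is precisely the statement that the pair-function $\{u,v\}\mapsto a_{uvo}$ on $I$ has vanishing second mixed difference. A $\{0,1\}$-valued symmetric function on $\binom{I}{2}$ with this property must be ``affine'', and one checks this forces it to be of one of four types: empty, complete, a star, or the complement of a star (in the latter two cases also recording the centre in $I$). Thus every vertex outside $I$ carries a ``type'' from a set of size $O(1)$. Iterating --- applying the $\F$-identity to $I\cup\{o_1,o_2\}$ and then to $I\cup\{o_1,o_2,o_3\}$ --- the ``fully outside'' term (in the last case $a_{o_1o_2o_3}$) is expressed through the identity as an explicit function of the remaining terms, which are already controlled by the types of $o_1,o_2,o_3$. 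So $G$ is, up to this bounded type data, a blow-up of a fixed $3$-graph on the type set.

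Finally, with $G$ a blow-up of one of finitely many ``model'' $3$-graphs, the hypothesis $\min\{e(G),\binom n3-e(G)\}=\Omega(n^3)$ forces the relevant vertex classes to have size $\Omega(n)$, and from the list of models one reads off an induced $K_{5,5}^{(3)}$ or $\overline{K_{5,5}^{(3)}}$. The main obstacle is the bootstrapping step: one must (i) show the type of a pair or triple of outside vertices is genuinely determined --- that is, that the ``offsets'' left free by a single application of the $\F$-identity are in fact pinned down by further applications, with no spurious $\Theta(n^2)$-many free bits surviving; (ii) enumerate the resulting model $3$-graphs; and (iii) verify that, given positive density of both edges and non-edges, each model contains one of the two target configurations and no other $\F$-free $3$-graph slips through. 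This casework is in effect a miniature version of the full classification in \cref{lem:3-classification}, and is where the real work lies.
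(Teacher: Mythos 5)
Your opening moves are sound: $\F$ is closed under complementation, Ramsey gives a homogeneous $I$ of size $5$, and the $\F$-identity on $I\cup\{o\}$ does force the link of $o$ on $I$ to be empty, complete, a star, or the complement of a star (these are exactly the $5$-vertex graphs whose adjacency function has vanishing second mixed difference on every ordered $4$-tuple). But the bootstrap is not merely deferred casework; it contains a real obstruction. Applying the $\F$-identity to $I\cup\{o_1,o_2\}$ with $o_1,o_2$ occupying slots in different primed pairs, the $I$-only terms vanish and one learns only the \emph{differences} $a_{xo_1o_2}-a_{x'o_1o_2}$, i.e.\ the link of the pair on $I$ up to a global complementation. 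When, say, both $o_1,o_2$ have the empty type, every such difference is $0$, so a genuine free bit survives. These bits are not spurious: in the target structure $G_{A,B,M}$ of \cref{lem:3-classification} they encode the matching $M$, which has up to $\Theta(n)$ pairs. Hence $G$ need not be a blow-up of a fixed $3$-graph on a bounded type set, and the final step of reading $K_{5,5}^{(3)}$ off a finite list of models does not go through as stated. Pushing your plan to completion would essentially amount to reproving most of \cref{lem:3-classification} itself, which is disproportionate for what is meant to be a base case.

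The paper's proof is far more economical and exploits the two-sided density hypothesis directly. It invokes the Fox--Sudakov unavoidable-patterns theorem (\cref{lem:fox-sudakov}) rather than plain Ramsey: this supplies disjoint $5$-sets $V_1,V_2,V_3$ whose colouring is constant on each pattern (a pattern recording which $V_i$'s the three vertices come from) but not globally monochromatic, so $G[V_1\cup V_2\cup V_3]$ is confined to $O(1)$ possibilities. Either some $V_{i_1}\cup V_{i_2}$ is bi-chromatic, in which case evaluating \cref{eq:3-form} on $x,y,z\in V_{i_1}$, $x',y',z'\in V_{i_2}$ forces that $10$-vertex piece to be $K_{5,5}^{(3)}$ or its complement; or every $V_{i_1}\cup V_{i_2}$ is monochromatic, and then $x,y,z\in V_1$, $x',y'\in V_2$, $z'\in V_3$ in \cref{eq:3-form} contradicts $\F$-freeness. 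If you wish to keep your framework, the cleanest repair is to replace Ramsey with Fox--Sudakov as the Ramsey-theoretic input --- a purely monochromatic $I$ discards half of the hypothesis and is what forces you into the open-ended bootstrap.
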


\begin{lem}
\label{lem:3-classification-inductive-step}Consider an $\F$-free
3-graph $G$ and let $v$ be one of its vertices. Suppose that $G-v=G_{A,B,M}$
for some partition $A\cup B$ of the vertex set of $G-v$ and some
set of disjoint pairs $M\subseteq A\times B$. Suppose also that $\left|A\right|,\left|B\right|\ge5$.
Then there is a partition $A'\cup B'$ of the vertex set of $G$,
and a set of disjoint pairs $M'\subseteq A'\times B'$, satisfying
$A'\supseteq A$, $B'\supseteq B$, $M'\supseteq M$, such that $G=G_{A',B',M'}$.
\end{lem}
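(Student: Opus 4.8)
The plan is to pin down the link of $v$ precisely enough to read off the conclusion. Write $c_{pq}=\one_{\{v,p,q\}\in E(G)}$ for $p,q\in A\cup B$, and call the graph $L=\{\{p,q\}:c_{pq}=1\}$ on $A\cup B$ the \emph{link} of $v$. I would first record the shape of links in $G_{A,B,M}$: for $u\in A$ the link of $u$ consists of an independent set on $A\setminus\{u\}$, a clique on $B$, and all pairs between $A\setminus\{u\}$ and $B$ except the pairs of $M$ and, if $u$ is matched to some $b_u\in B$, except every pair meeting $b_u$ (so $b_u$ is isolated in that link); symmetrically for $u\in B$ with the roles of $A$ and $B$ reversed. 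The target $G_{A',B',M'}$, with $v$ placed in $A'$ and $M'=M$ or $M'=M\cup\{(v,b_v)\}$, is exactly the one whose link of $v$ has this same shape relative to $(A,B,M)$, and it also satisfies $G_{A',B',M'}-v=G_{A,B,M}$; so it suffices to show $L$ has this shape, and to decide which side $v$ joins (placing $v$ in $A'$ versus $B'$ gives genuinely different graphs, so this is a real choice).

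The only new information comes from $\F$-freeness applied to $6$-sets through $v$. Taking the three ``pairs'' to be $\{v,t\},\{p,p'\},\{q,q'\}$, $\F$-freeness gives, for every reference vertex $t\ne v$ and all distinct $p,p',q,q'\in A\cup B\setminus\{t\}$,
\[
c_{pq}-c_{pq'}-c_{p'q}+c_{p'q'}=a_{tpq}-a_{tpq'}-a_{tp'q}+a_{tp'q'},
\]
whose right-hand side is a second mixed difference of a link in $G-v=G_{A,B,M}$ and hence is completely determined by the description above. I would first choose $t,p,p',q,q'$ avoiding all vertices covered by $M$; a short computation then shows the right-hand side is $0$ when $\{p,p',q,q'\}$ lies inside $A$, inside $B$, or is split with $p,p'\in A$ and $q,q'\in B$, and is $-1$ when $p,q\in A$ and $p',q'\in B$. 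In particular $L$ has vanishing second mixed differences on the ``generic'' part of each of $\binom{A}{2}$, $\binom{B}{2}$ and $A\times B$. An elementary fact about $\{0,1\}$-valued functions with vanishing second differences (such a function equals $f(p)+f(q)$ for some $f$, which then takes at most two values, the rarer one at most once) forces $L$ restricted to generic $\binom{A}{2}$ to be empty, complete, a star, or the complement of a star, and likewise on generic $\binom{B}{2}$, while on generic $A\times B$ it forces $c_{pq}$ to be constant or to differ from a constant only on the pairs meeting a single vertex. Comparing with the link shapes above — exactly one of $A$, $B$ must be generically a clique in $L$ — determines whether $v$ belongs with $A$ or with $B$; say with $A$.

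It then remains to rule out the spurious ``star'' and ``complement-of-star'' possibilities, to determine $c_{pq}$ on the non-generic pairs (those meeting $V(M)$), and to identify the at most one vertex $b_v$ that is isolated in $L$. For this I would re-run the identity with $t$, or with some of $p,p',q,q'$, taken to be matched vertices or the $M$-partners of the reference vertices: now the right-hand side is a definite nonzero value forced by $M$, and combining a handful of such instances pins down every $c_{pq}$. The outcome should be that $c_{pq}=0$ for all $p,q\in A$, that $B\setminus\{b_v\}$ is a clique in $L$ and $b_v$ (if it exists) is isolated and unmatched in $M$, and that $c_{pq}=1$ for every pair with $p\in A$, $q\in B\setminus\{b_v\}$, $(p,q)\notin M$. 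Setting $A'=A\cup\{v\}$, $B'=B$ and $M'=M$ or $M\cup\{(v,b_v)\}$ gives $A'\supseteq A$, $B'\supseteq B$, $M'\supseteq M$, and $G=G_{A',B',M'}$: triples inside $A\cup B$ agree because $G-v=G_{A,B,M}$ and $M'$ adds no pair inside $A\cup B$, and triples through $v$ agree by the description of $L$ just obtained.

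The main obstacle is this last non-generic case analysis: a finite but slightly delicate bookkeeping over the positions of $t,p,p',q,q'$ relative to the partition and to $M$. It is precisely here that the hypothesis $|A|,|B|\ge 5$ enters — one needs several distinct unmatched reference vertices on each side to set up all the required instances of the identity and thereby separate the cases.
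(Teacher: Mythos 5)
Your approach is fundamentally the same as the paper's: both pin down the link of $v$ by applying the $\F$-freeness condition to $6$-tuples through $v$, and both then read off $c_{pq}=\one_{\{v,p,q\}\in E(G)}$ from the structure of $G-v=G_{A,B,M}$. Your organization via the second-mixed-difference identity
\[
c_{pq}-c_{pq'}-c_{p'q}+c_{p'q'}=a_{tpq}-a_{tpq'}-a_{tp'q}+a_{tp'q'}
\]
is a bit more systematic; the paper instead picks specific tuples by hand (its Eq. (1) is your identity with a carefully chosen auxiliary vertex $t=x_b^*\in B$, and the asymmetry of its tuple is what produces the ``$1+\dots$'' form directly). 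The ``elementary fact'' you cite about $\{0,1\}$-valued functions with vanishing second differences being empty/complete/star/anti-star is correct and is a nice way to package the generic constraints.

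However, there is a concrete gap in the plan as stated. You begin by choosing $t,p,p',q,q'$ ``avoiding all vertices covered by $M$'', and you later justify the hypothesis $|A|,|B|\ge5$ as guaranteeing ``several distinct unmatched reference vertices on each side.'' But $M$ can be a perfect matching of, say, $A$ --- for instance $|A|=|B|=5$ with $M$ matching every vertex of $A$ --- in which case $A$ has \emph{no} unmatched vertices and your first generic computation is vacuous. The hypothesis $|A|,|B|\ge5$ does not provide unmatched vertices; what it provides, and what the paper actually exploits, is room to choose reference vertices that are merely not $M$-partnered with the two or three \emph{specific} vertices currently in play (e.g., the paper's $x_b^*\in B\setminus\{x_b,x_b'\}$ is required only to satisfy $(x_a,x_b^*),(x_a',x_b^*)\notin M$; it may well be matched to some other vertex of $A$). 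Your computations can be salvaged by replacing ``unmatched'' everywhere with ``not matched to the relevant vertices'', but as written the generic step is not well founded.

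Beyond that, the substance of the lemma --- ruling out the star/anti-star possibilities, showing $\Gamma\setminus M$ is either empty or a star at a single $M$-unsaturated vertex of $B$, and verifying $b_v$ is unmatched so that $M'$ remains a matching --- is exactly the ``finite but slightly delicate bookkeeping'' you defer. This is where the paper does real work (via K\H{o}nig's theorem applied to $\Gamma\setminus M$ and several further instances of the $\F$-free identity), so I would not count the plan as a complete proof; but the route is right, and filling in the deferred steps along the lines you sketch should work once the unmatched-vertex issue is repaired.
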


To prove \cref{lem:3-classification-base-case}, we use the following
theorem due to Fox and Sudakov \cite[Theorem~4.2]{FS08}, which states
that certain natural ``patterns'' are unavoidable in hypergraphs
with density bounded away from zero and one. This theorem was actually
stated in \cite{FS08} without proof (Fox and Sudakov were mainly
concerned about an analogous theorem for graphs), so for completeness
we include a proof in \cref{sec:fox-sudakov}.
\begin{thm}
\label{lem:fox-sudakov}Consider a red-blue colouring of the edges
of the complete $n$-vertex 3-graph, with $\Omega\left(n^{3}\right)$
red edges and $\Omega\left(n^{3}\right)$ blue edges, and consider
any $t\in\NN$.

If $n$ is sufficiently large then $G$ contains disjoint vertex subsets
$V_{1},V_{2},V_{3}$ each of size $t$, such that for every function
$f:\left\{ 1,2,3\right\} \to\left\{ 1,2,3\right\} $, all the edges
$\left\{ v_{1},v_{2},v_{3}\right\} $ with $v_{i}\in V_{f\left(i\right)}$
for $i\in\left\{ 1,2,3\right\} $ have the same colour, but the edge
colouring of $V_{1}\cup V_{2}\cup V_{3}$ is not monochromatic.
\end{thm}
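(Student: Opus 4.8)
The plan is to combine a classical theorem of Erd\H{o}s on Tur\'an numbers of complete multipartite hypergraphs with iterated applications of the product Ramsey theorem. Write $\chi$ for the given red--blue colouring of $\binom{\range n}{3}$, and call a triple $(V_1,V_2,V_3)$ of pairwise disjoint vertex sets \emph{blocked} if $\chi(\{v_1,v_2,v_3\})$ depends only on the multiset $\{i,j,\ell\}$ whenever $v_1\in V_i$, $v_2\in V_j$ and $v_3\in V_\ell$ are distinct. There are ten such multisets (``types''): three of the form $\{a,a,a\}$, six of the form $\{a,a,b\}$, and the transversal type $\{1,2,3\}$. The statement to be proved asks for a blocked triple with $|V_1|=|V_2|=|V_3|=t$ whose ten type-colours are not all equal. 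Since $t$ is fixed, every auxiliary parameter introduced below is a constant (a tower of bounded height in $t$), so the hypothesis ``$n$ sufficiently large'' will absorb all of them.

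The main engine will be a \emph{homogenisation lemma}: for every $m$ there is $N_0=N_0(m)$ such that whenever the vertex set contains disjoint parts $X_1,X_2,X_3$ of size $N_0$, there exist $Y_i\subseteq X_i$ with $|Y_i|=m$ for which $(Y_1,Y_2,Y_3)$ is blocked. I will also need a two-part variant: given disjoint $A,B$ of size $N_0'(m)$, each inducing a monochromatic $3$-graph, there are $A'\subseteq A$ and $B'\subseteq B$ of size $m$ on which the two ``mixed'' types (two vertices in $A'$ and one in $B'$, and the reverse) are monochromatic. Both follow by treating the constantly many types one at a time and homogenising each in turn via the product Ramsey theorem (equivalently, a nested sequence of ordinary, bipartite and tripartite Ramsey arguments), at each step passing to smaller sub-parts; homogeneity of an already-treated type is preserved under passing to subsets, so the order does not matter. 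I expect this bookkeeping, together with the resulting tower-type bounds, to be the most technical — though not conceptually difficult — part of the argument.

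With these in hand I would argue as follows. The red $3$-graph has $\Omega(n^3)$ edges, so by Erd\H{o}s's theorem it contains a complete tripartite $K^{(3)}_{N_0,N_0,N_0}$ with parts $X_1,X_2,X_3$, so that every transversal triple of the $X_i$ is red. Fix $m$ large (in particular $m\ge 2t$ and $3m-2\ge N_0'(2t)$) and set $N_0=N_0(m)$; the homogenisation lemma then yields blocked $Y_i\subseteq X_i$ of size $m$ on which the transversal type is red. If any of the other nine types is blue, $(Y_1,Y_2,Y_3)$ is blocked and non-monochromatic, and passing to $t$-subsets finishes the proof. Otherwise all ten types are red, so $Y_1\cup Y_2\cup Y_3$ is a red clique of size $3m$. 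Running the identical argument on the blue $3$-graph, I am either done or obtain a blue clique of size $3m$. Hence I may assume there is a red clique $Q_r$ and a blue clique $Q_b$; since a monochromatic clique cannot be both red and blue, $|Q_r\cap Q_b|\le 2$, and after discarding the overlap I have \emph{disjoint} monochromatic cliques of sizes $\ge 3m-2$ in the two colours. (This is the only place where the hypothesis that both colour classes are large is genuinely used: a merely dense colour class need not contain a large monochromatic clique, which is precisely why complete tripartite subgraphs — not cliques — must be extracted first, and why ordinary Ramsey does not suffice.)

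To conclude, I would apply the two-part homogenisation lemma to $Q_r$ and $Q_b$, obtaining disjoint cliques $A'$ (red) and $B'$ (blue) of size $2t$ such that all triples with two vertices in $A'$ and one in $B'$ share a colour $\alpha$, and all triples with one vertex in $A'$ and two in $B'$ share a colour $\beta$. Take $V_1,V_2$ to be disjoint $t$-subsets of $A'$ and $V_3$ a $t$-subset of $B'$. Since every triple inside $A'$ is red and every triple inside $B'$ is blue, the types $\{1,1,1\},\{2,2,2\},\{1,1,2\},\{2,2,1\}$ are red, the type $\{3,3,3\}$ is blue, the types $\{1,1,3\},\{2,2,3\}$ and the transversal type $\{1,2,3\}$ all equal $\alpha$ (two vertices in $A'$, one in $B'$), and $\{3,3,1\},\{3,3,2\}$ equal $\beta$. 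Hence $(V_1,V_2,V_3)$ is blocked, and it is not monochromatic because $V_1$ induces red triples whereas $V_3$ induces blue ones — which is exactly what the statement requires.
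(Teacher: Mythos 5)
Your argument is correct (for $t\ge 3$, the only regime in which the non-monochromaticity clause can be non-vacuous; the paper's proof carries the same implicit restriction), but it follows a genuinely different route from the one in the paper. The paper's proof rests on a counting lemma, proved by induction on the uniformity, showing that $\Omega(n^{2})$ pairs of vertices lie in $\Omega(n)$ edges of \emph{each} colour; combining this with the K\H{o}v\'ari--S\'os--Tur\'an theorem and Erd\H{o}s's theorem produces a single configuration $V_1,V_2,R,B$ in which every transversal edge into $R$ is red and every transversal edge into $B$ is blue, so the two colours are wired together \emph{before} the product-Ramsey cleanup, and the differing pair of types is always ``internal to $V_1$'' versus ``transversal''. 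You instead apply Erd\H{o}s's theorem to each colour class separately and exploit the dichotomy that homogenising a monochromatic-transversal tripartite system either yields the desired non-monochromatic blocked triple outright or collapses into a large monochromatic clique of that colour; the residual case of two (essentially) disjoint large cliques of opposite colours is then handled by the explicit construction with $V_1,V_2$ inside the red clique and $V_3$ inside the blue one. Your route dispenses with the mixed-degree counting lemma and with K\H{o}v\'ari--S\'os--Tur\'an at the cost of a case split, and it would generalise to higher uniformities in the same way as the paper's appendix argument; the one thing it does not deliver is the paper's remarked strengthening that the edges inside $V_1$ always receive a different colour from the transversal edges, since in your early-finish case the type disagreeing with the transversal type need not be the $V_1$-internal one.
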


Now we prove \cref{lem:3-classification-base-case} and \cref{lem:3-classification-inductive-step}.
Note that $n$-vertex 3-graphs are equivalent to red-blue colourings
of the complete $n$-vertex 3-graph; we switch between these points
of view interchangeably.
\begin{proof}[Proof of \cref{lem:3-classification-base-case}]
We may assume that $G$ contains disjoint vertex subsets $V_{1},V_{2},V_{3}$
satisfying the conclusion of \cref{lem:fox-sudakov}, with $t=5$.
Suppose first that for some $i_{1},i_{2}$, the set $V_{i_{1}}\cup V_{i_{2}}$
is not monochromatic and the induced colouring is not isomorphic to
$K_{5,5}^{\left(3\right)}$ or its complement. For all remaining possibilities
of the induced colouring, one can check that if $x,y,z\in V_{i_{1}}$
and $x',y',z'\in V_{i_{2}}$ then the expression in \cref{eq:3-form}
is nonzero, meaning that $G$ is not $\F$-free, which is a contradiction.
Alternatively, if each $V_{i_{1}}\cup V_{i_{2}}$ is monochromatic,
then there is only one possibility (up to swapping colours) for the
colouring of $V_{1}\cup V_{2}\cup V_{3}$, and one can check that
if $x,y,z\in V_{1}$, $x',y'\in V_{2}$ and $z'\in V_{3}$, then the
expression in \cref{eq:3-form} is nonzero, which is again a contradiction.
\end{proof}
\begin{proof}[Proof of \cref{lem:3-classification-inductive-step}]
First, consider $x_{a},x_{a}'\in A$ and $x_{b},x_{b}'\in B$ such
that $\left(x_{a},x_{b}'\right),\left(x_{a}',x_{b}\right)\notin M$.
By the assumption that $\left|B\right|\ge5$, there is $x_{b}^{*}\in B\setminus\left\{ x_{b},x_{b}'\right\} $
such that $\left(x_{a}',x_{b}^{*}\right),\left(x_{a},x_{b}^{*}\right)\notin M$.
If $\left(x,x',y,y',z,z'\right)=\left(v,x_{b}^{*},x_{a},x_{b},x_{a}',x_{b}'\right)$
then the expression in \cref{eq:3-form} is equal to

\begin{equation}
1+a_{vx_{a}x_{a}'}+a_{vx_{b}x_{b}'}-a_{vx_{a}x_{b}'}-a_{vx_{a}'x_{b}}=0.\label{eq:1}
\end{equation}
This implies that at most one of $\left\{ v,x_{a},x_{a}'\right\} $
and $\left\{ v,x_{b},x_{b}'\right\} $ can be an edge. But for any
$x_{a},x_{a}'\in A$ and any two vertices in $B$, we can assign those
two vertices the labels $x_{b}$ and $x_{b'}$ in such a way that
$\left(x_{a},x_{b}'\right),\left(x_{a}',x_{b}\right)\notin M$. So
either there is no edge containing $v$ and two vertices from $A$,
or there is no edge containing $v$ and two vertices from $B$. Without
loss of generality, suppose the former is the case.

Now, we wish to study the edges of the form $\left\{ v,x_{a},x_{b}\right\} $,
for $x_{a}\in A$ and $x_{b}\in B$. Let $\Gamma$ be the auxiliary
bipartite graph on the vertex set $A\cup B$ with an edge $\left(x_{a},x_{b}\right)\in A\times B$
if $\left\{ v,x_{a},x_{b}\right\} $ is \emph{not} an edge in $G$.
\begin{claim*}
Either $\Gamma=M$ or $\Gamma$ is obtained from $M$ by adding every
edge $\left(x_{a},x^{*}\right)$ incident to a single vertex $x^{*}\in B$
which does not appear in any pair of $M$.
\end{claim*}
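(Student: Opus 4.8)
The plan is to read the whole claim off \cref{eq:1}. Since we are in the case where no edge of $G$ contains $v$ together with two vertices of $A$, the term $a_{vx_ax_a'}$ in \cref{eq:1} vanishes, so \cref{eq:1} says that
\[
a_{vx_ax_b'}+a_{vx_a'x_b}=1+a_{vx_bx_b'}
\]
whenever $x_a\ne x_a'$ in $A$, $x_b\ne x_b'$ in $B$, and $(x_a,x_b'),(x_a',x_b)\notin M$ (the auxiliary vertex $x_b^*$ used to derive \cref{eq:1} always exists because $|B|\ge 5$). Write $\Gamma(x_b)=\{x_a\in A:(x_a,x_b)\in\Gamma\}$ and $M(x_b)=\{x_a\in A:(x_a,x_b)\in M\}$ (a set of size at most one).

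First I would prove a column dichotomy: each column satisfies either $\Gamma(x_b')\subseteq M(x_b')$ (an \emph{empty} column) or $A\setminus M(x_b')\subseteq\Gamma(x_b')$ (a \emph{full} column). Indeed, fixing any admissible $x_a',x_b$ in the displayed identity, the right-hand side does not depend on $x_a$, so $a_{vx_ax_b'}$ takes a constant value over all $x_a\ne x_a'$ with $(x_a,x_b')\notin M$; since $|A|\ge 5$ any two such index sets overlap, so this constant is the same for every choice of $x_a',x_b$, and its two possible values $1$ and $0$ correspond to the two column types. Plugging this back in shows that $a_{vx_bx_b'}$ would equal $-1$ if $x_b$ and $x_b'$ were both full; hence at most one column is full (and $a_{vx_bx_b'}=1$ whenever neither $x_b$ nor $x_b'$ is full).

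Next I would show $M\subseteq\Gamma$. Take $(x_a,x_b')\in M$. I would redo the computation that produced \cref{eq:1} with the same tuple $(v,x_b^*,x_a,x_b,x_a',x_b')$, but now choose $x_b$ to be an \emph{empty} column different from $x_b'$ and pick $x_a'\in A\setminus(M(x_b)\cup\{x_a\})$ and $x_b^*\in B\setminus\{x_b,x_b'\}$ with $(x_a,x_b^*),(x_a',x_b^*)\notin M$; such choices exist because all but at most one column is empty and $|A|,|B|\ge 5$. Because $(x_a,x_b')\in M$, the triple $\{x_b^*,x_a,x_b'\}$ is a non-edge of $G-v$, so the term of \cref{eq:3-form} that contributed the ``$+1$'' to \cref{eq:1} now vanishes and \cref{eq:1} becomes $a_{vx_ax_b'}+a_{vx_a'x_b}=a_{vx_bx_b'}$. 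Since $x_b$ is empty and $x_a'\notin M(x_b)$, we have $a_{vx_a'x_b}=1$, which forces $a_{vx_ax_b'}=0$, i.e.\ $(x_a,x_b')\in\Gamma$.

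Finally I would conclude. If no column is full, then $\Gamma\subseteq M$ by the dichotomy and $M\subseteq\Gamma$ by the previous step, so $\Gamma=M$. Otherwise there is a unique full column $x^*$: for $x_b\ne x^*$ emptiness together with $M\subseteq\Gamma$ gives $\Gamma(x_b)=M(x_b)$, and $\Gamma(x^*)=A$ since it contains both $A\setminus M(x^*)$ (fullness) and $M(x^*)$ ($M\subseteq\Gamma$). It remains to check $x^*$ appears in no pair of $M$: if $(x_a^0,x^*)\in M$, I would evaluate \cref{eq:3-form} on the tuple $(v,x_b,x_a^0,a',x^*,b')$ for any $x_b\in B\setminus\{x^*\}$, $b'\in B\setminus\{x^*,x_b\}$, and $a'\in A$ distinct from $x_a^0$ and from the $M$-partners of $x_b$ and $b'$; using $\Gamma(x^*)=A$, $\Gamma(b')=M(b')$, and that $\{x_b,x_a^0,x^*\}$ is a non-edge of $G-v$, one finds that this expression equals $1$, contradicting $\F$-freeness. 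Hence $x^*$ is unmatched and $\Gamma=M\cup\{(x_a,x^*):x_a\in A\}$, exactly as claimed. The heart of the argument is guessing the right $6$-tuples in the last two steps; everything else is bookkeeping, the one point of care being that at each use of $\F$-freeness one must exhibit enough auxiliary vertices avoiding $M$ — which is precisely what the hypotheses $|A|,|B|\ge 5$ guarantee. I expect the term-by-term evaluation of \cref{eq:3-form} in the final step (ruling out $x^*\in\bigcup M$) to be the most delicate piece, though it is still a routine computation.
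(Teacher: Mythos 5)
Your proof is correct, but it follows a genuinely different route from the paper's. The paper extracts the cover vertex $x^*$ by observing from \cref{eq:1} that $\Gamma\setminus M$ has no matching of size two and then invoking K\H{o}nig's theorem; all of the per-column structure (that $M\subseteq\Gamma$, that non-$M$ entries in a fixed column all have the same status, that $x^*\notin A$, that $x^*$ is not $M$-matched, and that $\Gamma(x^*)=A$) is then read off from a single second specialization of \cref{eq:3-form}, namely the tuple $(v,x_b^*,x_b,x_a^*,x_a,x_a')$ which produces the paper's \cref{eq:2}. You instead squeeze the column dichotomy directly out of \cref{eq:1} by varying $x_a$ with the other coordinates fixed, which is a nice observation that avoids K\H{o}nig's theorem entirely; the price is that you then need two further specializations of \cref{eq:3-form} — the ``$(x_a,x_b')\in M$'' variant of \cref{eq:1} to get $M\subseteq\Gamma$, and the tuple $(v,x_b,x_a^0,a',x^*,b')$ to rule out $x^*$ being $M$-matched. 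I checked all three of your tuple evaluations (the dichotomy step, the $M\subseteq\Gamma$ step, and the final sum $0-1-0-0+1+1+1-1=1\ne 0$) and they are correct, as is the bookkeeping that $|A|,|B|\ge 5$ supplies enough auxiliary vertices avoiding $M$ at each stage. In total the work is comparable: the paper centralises the argument in one auxiliary identity plus K\H{o}nig, while your version distributes it over more bespoke $6$-tuples in exchange for being entirely hands-on.
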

\begin{proof}[Proof of claim]
For any $x_{a},x_{a}'\in A$, $x_{b},x_{b}'\in B$ with $\left(x_{a},x_{b}'\right),\left(x_{a}',x_{b}\right)\notin M$,
by \cref{eq:1} at least one of $\left\{ v,x_{a},x_{b}'\right\} $
and $\left\{ v,x_{a}',x_{b}\right\} $ is an edge. So $\Gamma\setminus M$ does not have a matching of size 2, and by K\H{o}nig's theorem the edges in
$\Gamma\setminus M$ are all incident to a single vertex $x^{*}$,
which may be in $A$ or $B$. (If $\Gamma\setminus M$ consists of
just one edge, then we can take $x^{*}\in B$, and if $\Gamma\setminus M$
has no edges, we set $x^{*}=\emptyset$).

Consider any $x_{a},x_{a}'\in A$ and $x_{b}\in B$. Choose $x_{b}^{*}\in B\setminus\left\{ x_{b}\right\} $
and $x_{a}^{*}\in A\setminus\left\{ x_{a},x_{a}'\right\} $ such
that $\left(x_{a}^{*},x_{b}\right),\left(x_{a}^{*},x_{b}^{*}\right),\left(x_{a},x_{b}^{*}\right),\left(x_{a}',x_{b}^{*}\right)\notin M$.
If $\left(x,x',y,y',z,z'\right)=\left(v,x_{b}^{*},x_{b},x_{a}^{*},x_{a},x_{a}'\right)$
then the expression in \cref{eq:3-form} is equal to
\begin{equation}
\one_{\left(x_{a}',x_{b}\right)\notin M}-\one_{\left(x_{a},x_{b}\right)\notin M}+a_{vx_{a}x_{b}}-a_{vx_{a}'x_{b}}=0.\label{eq:2}
\end{equation}
If $\left(x_{a},x_{b}\right)\in M$, then $\left(x_{a}',x_{b}\right)\notin M$, so \cref{eq:2} immediately implies that
$\left\{ v,x_{a}',x_{b}\right\} $ is an edge of $G$, and $\left\{ v,x_{a},x_{b}\right\} $
is not. This implies that $M\subseteq\Gamma$, and it also allows
us to rule out the possibility that $x^{*}$ is a vertex in $B$ that
appears in some pair of $M$ (in this case we would have proved that $\Gamma\setminus M$ has no edges incident to $x^*$, which would contradict the choice of $x^*$).

Also, if $\left(x_{a},x_{b}\right),\left(x_{a}',x_{b}\right)\notin M$
then \cref{eq:2} implies that $\left(x_{a},x_{b}\right)$ is an edge
of $\Gamma$ if and only if $\left(x_{a}',x_{b}\right)$ is an edge
of $\Gamma$. This rules out the possibility that $x^{*}\in A$, and
proves that if $x^{*}\ne\emptyset$ then actually $\left(x_{a},x^{*}\right)\in\Gamma$
for all $x_{a}$, finishing the proof of the claim
\end{proof}
Given the above claim, it now remains to consider edges of the form
$\left\{ v,x_{b},x_{b}'\right\} $, for $x_{b},x_{b}'\in B$. For
any $x_{b},x_{b}'\in B\setminus\left\{ x^{*}\right\} $ we may choose
$x_{a},x_{a}'\in A$ such that $\left(x_{a},x_{b}'\right),\left(x_{a}',x_{b}\right)\notin M$,
so by \cref{eq:1}, $\left\{ v,x_{b},x_{b}'\right\} $ is an edge of
$G$. If $\Gamma=M$ we may now conclude that $G=G_{A',B,M}$ for
$A'=A\cup\left\{ v\right\} $.

Alternatively, if $\Gamma\ne M$, we may similarly deduce from \cref{eq:1}
that for any $x_{b}\in B\setminus\left\{ x^{*}\right\} $, $\left\{ v,x_{b},x^{*}\right\} $
is not an edge of $G$. We may then conclude that $G=G_{A',B,M'}$
for $A'=A\cup\left\{ v\right\} $ and $M'=M\cup\left\{ \left(v,x^{*}\right)\right\} $.
\end{proof}

\section{\label{sec:concluding}Further directions of research}

\begin{comment}
In this paper we have considered the problem of upper-bounding the
number of induced subgraphs with exactly $k$ vertices and $\ell$
edges, in a large graph or hypergraph $G$. There are many further
interesting directions of research.
\end{comment}

\subsection{Possible generalisations and improvements}

An obvious conjecture is that the logarithmic term in \cref{thm:almost}
is unnecessary, as follows.
\begin{conjecture}
\label{conj:kl}For all $k$ and all $0\le\ell\le\binom{k}{2}$, let
$\minell=\min\left\{ \ell,\binom{k}{2}-\ell\right\} $. We have
\[
\ind\left(k,\ell\right)=O\left(\sqrt{k/\minell}\right).
\]
\end{conjecture}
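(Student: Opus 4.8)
The plan is to isolate where the logarithm enters. In the proof of \cref{thm:almost} the only lossy step is the application of the Meka--Nguyen--Vu inequality \cref{thm:MNV} with $d=2$: once \cref{claim:matching} supplies a matching of size $r=\Omega(\minell/k)$ in the auxiliary graph $H$ --- equivalently, the quadratic polynomial $X(\g)$ in the Rademacher variables has rank $r$ --- \cref{thm:MNV} only gives $\Pr(X=\ell\mid\mathcal E)\le\log^{O(1)}(r)/\sqrt r$. Thus \cref{conj:kl} would follow verbatim from a \emph{log-free} quadratic anticoncentration inequality: for every degree-$2$ polynomial $f$ in Rademacher variables of rank $r$ and every $\ell\in\RR$, $\Pr(f(\g)=\ell)=O(1/\sqrt r)$. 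Substituting such a bound into the argument of \cref{sec:almost} (and into \cref{claim:many-edges}, which already produces superpolynomially small probabilities) would immediately yield $\ind(k,\ell)=O(\sqrt{k/\minell})$, with no other change to the proof. So the whole problem reduces to quadratic anticoncentration over $\Rad^n$ without logarithmic losses.

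A natural route to such an inequality is decoupling followed by the sharp (log-free) Erd\H{o}s--Littlewood--Offord bound $\Theta(1/\sqrt m)$ for linear forms with $m$ nonzero coefficients. Write $f(\g)=\sum_{i<j}a_{ij}\gamma_i\gamma_j+(\text{linear})+(\text{const})$, let $\{\{i_1,j_1\},\dots,\{i_r,j_r\}\}$ be a matching witnessing the rank, and suppose first that $J=\{j_1,\dots,j_r\}$ is an independent set in the coefficient graph $H$. Conditioning on all $\gamma_p$ with $p\notin J$ then makes the degree-$2$ part in the variables $\gamma_{j_1},\dots,\gamma_{j_r}$ vanish, so $f$ restricts to an affine-linear form $\sum_t c_t\gamma_{j_t}+e$, whose coefficient $c_t=a_{i_tj_t}\gamma_{i_t}+(\cdots)$ depends genuinely on $\gamma_{i_t}$ because $a_{i_tj_t}\ne0$, and hence is nonzero with conditional probability at least $1/2$. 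If one could show that $\Omega(r)$ of the $c_t$ are nonzero with probability $1-O(1/\sqrt r)$, then the Erd\H{o}s--Littlewood--Offord inequality applied conditionally and averaged over the revealed variables would give $\Pr(f(\g)=\ell)=O(1/\sqrt r)$, as desired. For a general (not one-sided-independent) matching one would first pass, via a random bipartition of the index set and the decoupling inequality for quadratic forms, to an off-diagonal bilinear form, and then proceed as above.

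The hard part --- and the reason \cref{thm:MNV} carries a polylogarithmic loss in the first place --- is that neither of the two hypotheses used above comes for free. First, the rank-witnessing matching need not be an induced matching and in general cannot be replaced by one with only a constant-factor loss: the clique $K_r$ has matching number $\lfloor r/2\rfloor$ but contains no independent set saturated by a matching of size larger than $1$, so on dense coefficient graphs the conditioning above simply fails to linearise $f$ (and the structured matchings built in \cref{subsec:case-1,subsec:case-2} are not induced either). Second, even granting linearisation one needs exponential, or at least $O(1/\sqrt r)$-type, concentration for the number of surviving nonzero coefficients $c_t$ rather than mere control of its mean, and since the $c_t$ share variables they are not independent. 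Overcoming these two issues for arbitrary quadratic forms is essentially the $d=2$ case of the Gotsman--Linial conjecture, which appears to be open; any progress there --- in particular any strengthening of Kane's theorem \cite{Kan14} underlying \cref{thm:MNV} --- would transfer directly. A complementary line worth pursuing is to avoid passing to Rademacher variables altogether and instead prove a local limit theorem for $X_{G,k}$ directly on the slice $\hyp(n,n/2)$: the extremal graphs $K_{fs,fk-fs}$ show that $X_{G,k}$ ought to spread over $\Theta(\sqrt{\minell k})$ consecutive values with density $\Theta(1/\sqrt{\minell k})$ each, which is exactly the log-free behaviour one wants and which the slice genuinely exhibits even though its product-measure approximation does not.
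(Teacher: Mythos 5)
This statement is \cref{conj:kl} and the paper does not prove it; it is presented as an open problem in the concluding section. Your proposal correctly recognizes this and offers, not a proof, but a reduction that matches the paper's own remarks: the only lossy step in the proof of \cref{thm:almost} is the invocation of \cref{thm:MNV}, whose polylogarithmic factor comes from Kane's bound toward the Gotsman--Linial conjecture, so a log-free quadratic anticoncentration inequality --- equivalently, $\AS(f)=O(\sqrt n)$ for degree-$2$ polynomial threshold functions --- would yield \cref{conj:kl} by direct substitution into the existing argument. Your additional discussion is a sound elaboration beyond what the paper says explicitly: the decoupling-plus-Erd\H{o}s--Littlewood--Offord route to such an inequality does run into exactly the two obstructions you name (the rank-witnessing matching need not be induced, so conditioning does not linearise the quadratic form --- $K_r$ is the clean counterexample showing one cannot pass to an induced matching with constant-factor loss --- and even after linearisation the surviving coefficients share variables, so concentration of their count is not automatic), and these are indeed the crux of the $d=2$ Gotsman--Linial problem. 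Your complementary suggestion of a local limit theorem proved directly on the slice, bypassing the Rademacher coupling, is also a reasonable direction and is consistent with the extremal complete-bipartite constructions. In short, there is no gap in your reasoning because you make no unsupported claims: you correctly conclude, as does the paper, that the conjecture reduces to a log-free anticoncentration bound and remains open.
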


We remark that the logarithmic term in \cref{thm:almost} arises purely
because of the corresponding logarithmic term in \cref{thm:MNV}. In
turn, this logarithmic term arises from an estimate due to Kane for
the so-called Gotsman--Linial conjecture, as follows. For
an $n$-variable Boolean function $f:\left\{ -1,1\right\} ^{n}\to\RR$,
\begin{comment}
the $i$-th \emph{influence} of $f$ is defined by
\[
\Inf_{i}\left(f\right)=\Pr\left(f\left(\gamma_{1},\dots,\gamma_{n}\right)\ne f\left(\gamma_{1},\dots,-\gamma_{i},\dots,\gamma_{n}\right)\right),
\]
where $\g\in\Rad^{n}$. Then 
\end{comment}
the \emph{average sensitivity} of $f$
is defined by
\[
\AS\left(f\right)=\sum_{i=1}^{n}\Pr\left(f\left(\gamma_{1},\dots,\gamma_{i-1},\gamma_{i},\gamma_{i+1},\dots,\gamma_{n}\right)\ne f\left(\gamma_{1},\dots,\gamma_{i-1},-\gamma_{i},\gamma_{i+1},\dots,\gamma_{n}\right)\right),
\]
where $\g\in\Rad^{n}$. The Gotsman--Linial conjecture essentially\footnote{The original conjecture due to Gotsman and Linial \cite{GL94} was
slightly sharper than what is stated here, but it was recently disproved
\cite{Cha18,KMW}.} states that if a Boolean function $f$ has the form $f\left(\boldsymbol{x}\right)=\one_{p\left(\boldsymbol{x}\right)>0}$
for a degree-$d$ polynomial $p$ (that is, $f$ is a degree-$d$
\emph{polynomial threshold function}), then $\AS\left(f\right)=O\left(d\sqrt{n}\right)$.
Kane \cite{Kan14} proved that if $f$ is a degree-$O(1)$ threshold
function then $\AS\left(f\right)=\sqrt{n}\log^{O\left(1\right)}n$.
To prove \cref{conj:kl} via the methods in \cite{MNV16} and the methods
in \cref{sec:almost}, it would suffice to prove that $\AS\left(f\right)=O\left(\sqrt{n}\right)$
under the same assumptions. In particular, a bound of the form $\AS\left(f\right)\le g\left(n\right)$
would imply that under the conditions of \cref{conj:kl} we have $\ind\left(k,\ell\right)=O\left(g\left(\minell/k\right)k/\minell\right).$

Next, the appropriate generalisation of \cref{conj:kl} to hypergraphs
seems to be as follows.
\begin{conjecture}
\label{conj:hypergraphs}For any $r,k$ and any $0\le\ell\le\binom{k}{r}$,
let $\minell=\min\left\{ \ell,\binom{k}{r}-\ell\right\} $. We have
\[
\ind_{r}\left(k,\ell\right)=O\left(\sqrt{k^{r-1}/\minell}\right).
\]
\end{conjecture}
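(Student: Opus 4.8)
The plan is to push the strategy of \cref{thm:almost,thm:hypergraph-sqrt} to arbitrary uniformity $r$. By monotonicity of $I_r(n,k,\ell)$ in $n$ it suffices to fix an $r$-graph $G$ on $n=2k$ vertices and bound $\Pr(X_{G,k}=\ell)$; taking complements we may assume $e(G)\le\binom n r/2$, and an $r$-graph version of \cref{claim:many-edges} (apply \cref{lem:concentration} with $c_x=\deg(x)$) lets us assume $e(G)=\Omega(\minell)$, so that $\minell=\Theta(\ell)$. Under the coupling of \cref{fact:coupling}, \cref{lem:coefficients} expresses $X_{G,k}$, for each outcome of $\sigma$, as a degree-$r$ multilinear polynomial $f_\sigma(\g)$ in $\g\in\Rad^{n/2}$. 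We would then apply \cref{thm:MNV} (or a strengthening of \cref{cor:second-degree-MNV}) after showing that, with probability $1-O(k^{r-1}/\minell)$, $f_\sigma$ has the relevant notion of rank $\Omega(\minell/k^{r-1})$. Note that this route first gives $\ind_r(k,\ell)=\log^{O(1)}(\minell/k^{r-1})\sqrt{k^{r-1}/\minell}$; removing the logarithm, as the conjecture demands, would require replacing \cref{thm:MNV} by a bound of the form $\Pr(f(\g)=\ell)=O(1/\sqrt r)$, which follows from the average-sensitivity estimate $\AS(f)=O(d\sqrt n)$ for degree-$d$ polynomial threshold functions, exactly as discussed for \cref{conj:kl}.

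The combinatorial content is to produce the rank. By \cref{lem:coefficients}, the degree-$d$ coefficient of $f_\sigma$ indexed by $\{i_1,\dots,i_d\}$ is, up to the factor $2^{-d}$, the alternating sum $\sum_{\bb\in\{0,1\}^d}(-1)^{|\bb|}\deg_G(\{\sigma(i_j+\b_j k):1\le j\le d\})$ over all ways of picking one of the two ``halves'' of each pair. So a nonzero degree-$d$ coefficient corresponds to an \emph{alternating $2d$-configuration} in $G$: a $2d$-tuple $(x_1,x_1',\dots,x_d,x_d')$ for which the corresponding alternating sum of $d$-set degrees is nonzero (for $d=r$ this is just the alternating sum of edge indicators over a $2r$-set, as in \cref{eq:3-form}; for $d<r$ one also sums over the remaining $r-d$ vertices). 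First I would show that if $G$ has $\Omega(n^{2r})$ good top-level ($d=r$) configurations, then --- by the concentration argument of \cref{thm:hypergraph-sqrt} when $\minell=\Omega(k^r)$, and by a more careful greedy vertex-revealing procedure in the spirit of \cref{subsec:case-1,subsec:case-2} in the sparser range --- the auxiliary $r$-graph $H$ (edges $=$ nonzero degree-$r$ coefficients) contains a matching of size $\Omega(\minell/k^{r-1})$ with probability $1-O(k^{r-1}/\minell)$, after which \cref{thm:MNV} finishes. That this many good configurations exist unless $G$ lies within edit distance $o(n^r)$ of a short explicit list of exceptional $r$-graphs should follow from the Fox--Sudakov unavoidable-patterns theorem (\cref{lem:fox-sudakov}, now for general $r$) together with the induced hypergraph removal lemma, exactly as in \cref{lem:3-classification-base-case}.

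The hard case is when $G$ has only $o(n^{2r})$ good top-level configurations, and this is where I expect the main obstacle. One would need a classification, generalising \cref{lem:3-classification}, of the $r$-graphs that are $o(n^r)$-close to having no alternating $2r$-configuration: plausibly iterated complete-multipartite-type $r$-graphs with a bounded number of lower-uniformity exceptional patterns, analogous to $G_{A,B,M}$. For each $G$ on this list one passes to the degree-$(r-1)$ coefficients --- alternating sums of $(r-1)$-set degrees, which for the explicit families take values of order $n$ on $\Omega(n^{2(r-1)})$ configurations --- and invokes a strengthening of \cref{cor:second-degree-MNV} that permits dropping from degree $r$ not just to degree $r-1$ but, if needed, iteratively down to degree $1$, with coefficient thresholds of order $r^j$ times the largest degree-$r$ coefficient at level $r-j$; if some $G$ on the list also has few good degree-$(r-1)$ configurations one recurses on its structure. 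The principal difficulties are, first, that this classification is already delicate for $r=3$ (see the casework behind \cref{lem:3-classification}) and need not have a clean description for larger $r$; and second, that one must handle the entire range $k^{r-1}\ll\minell\ll k^r$, rather than only $\minell=\Omega(k^r)$ as in \cref{thm:hypergraph-sqrt}, where $H$ is sparse and the matching must be built by running the greedy procedure on a judiciously chosen sub-configuration of $G$, as in \cref{subsec:case-1,subsec:case-2}.
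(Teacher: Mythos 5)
This statement is \cref{conj:hypergraphs}, an open conjecture posed in the concluding section of the paper; the paper offers no proof, and your proposal does not close the gap either --- it is a research plan whose essential steps are exactly the open problems the paper identifies. Concretely, there are at least three missing ingredients, each of which your own write-up flags but does not supply. First, the conjectured bound has no logarithmic factor, and as the paper explains this already requires (even for $r=2$, i.e.\ \cref{conj:kl}) the average-sensitivity bound $\AS(f)=O(d\sqrt n)$ for bounded-degree polynomial threshold functions; what is actually known (Kane) is only $\sqrt n\log^{O(1)}n$, so \cref{thm:MNV} cannot be upgraded as you assume. Second, the classification of $r$-graphs with no ``alternating $2r$-configuration'' is only established for $r=3$ (\cref{lem:3-classification}), and the paper explicitly poses the general-$r$ classification as an open question; your appeal to ``a short explicit list of exceptional $r$-graphs'' and to an iterated, multi-level strengthening of \cref{cor:second-degree-MNV} is not backed by any argument, and the proof of \cref{cor:second-degree-MNV} (condition on the complement of an independent set of the top-level hypergraph) does not obviously iterate, since after one descent the lower-degree coefficients become random and the threshold structure degrades.

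Third, and most seriously for the stated range $0\le\ell\le\binom kr$, the entire removal-lemma route collapses when $\minell=o(k^r)$: the induced hypergraph removal lemma only controls edits up to $o(n^r)$, which swamps the signal when $e(G)=o(n^r)$. The paper states plainly that ``new ideas would be necessary to address the sparse case,'' and your proposal offers none beyond the hope that a greedy revealing procedure ``on a judiciously chosen sub-configuration'' will work. In the graph case the sparse regime was handled by a genuinely different argument (the high-degree/low-degree dichotomy of \cref{subsec:case-1,subsec:case-2}, which exploits the very special structure of alternating $3$-paths); no analogue is known for $r\ge3$, where, as the paper notes, dense $3$-graphs can have \emph{no} good top-level configurations at all. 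So the proposal should be read as a reasonable account of why the conjecture is believed and where the difficulties lie, not as a proof.
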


It is likely that the arguments in \cref{sec:hypergraph-sqrt} can
be pushed to hypergraphs with uniformity higher than three, but our
proof of \cref{thm:hypergraph-sqrt} involves some rather complicated
checking of cases, which would likely be even more complicated for
higher uniformities. To be specific, the part of the proof that really
depends on the uniformity-3 assumption is \cref{lem:3-classification},
which (essentially) classifies the 3-graphs such that
\[
a_{xyz}-a_{xyz'}-a_{xy'z}-a_{x'yz}+a_{xy'z'}+a_{x'yz'}+a_{x'y'z}-a_{x'y'z'}=0
\]
for every choice of distinct vertices $\left(x,x',y,y',z,z'\right)$.
In general, one might try to classify the $r$-graphs
such that

\[
\sum_{\bb\in\left\{ 0,1\right\} ^{r}}\left(-1\right)^{\left|\bb\right|}a_{z_{1}^{\b_{1}}\dots z_{r}^{\b_{r}}}=0
\]
for every choice of distinct vertices $\left(z_{1}^{0},z_{1}^{1},z_{2}^{0},z_{2}^{1},\dots,z_{r}^{0},z_{r}^{1}\right)$.
As a natural family of $r$-graphs with this property,
consider the complete $\left(r-1\right)$-partite $r$-graphs
$K_{n_{1},\dots,n_{r-1}}^{\left(r\right)}$, whose vertex set is partitioned
into $r-1$ parts of sizes $n_{1},\dots,n_{r-1}$, and whose edge
set is defined to consist of all sets of $r$ vertices which intersect
every part. Is it true that every $r$-graph with the
aforementioned property resembles some $K_{n_{1},\dots,n_{r-1}}^{\left(r\right)}$?
If so, using the same approach as for \cref{thm:hypergraph-sqrt} one
could prove that if $\minell=\Omega_{k}\left(k^{r}\right)$ then 
\[
\ind_{r}\left(k,\ell\right)\le\frac{\log^{O\left(1\right)}k}{\sqrt{k}}.
\]
However, since our proof uses the induced hypergraph removal lemma,
new ideas would be necessary to address the ``sparse'' case where
$\minell=o_{k}\left(k^{r}\right)$.

As observed in \cite{AHKT}, it also makes sense to generalise \cref{conj:1/e}
to hypergraphs:
\begin{conjecture}
For all $0<\ell<\binom{k}{r}$ we have $\ind_{r}\left(k,\ell\right)\le1/e+o_{k}\left(1\right)$.
\end{conjecture}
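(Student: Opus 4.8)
The plan is to prove the stronger statement that $\Pr(X_{G,k}=\ell)\le \ell^{\ell}e^{-\ell}/\ell!+o_{k}(1)$ for every $r$-graph $G$; this suffices because $\ell^{\ell}e^{-\ell}/\ell!\le 1/e$ for every integer $\ell\ge 1$, with equality only at $\ell=1$ (this follows by induction on $\ell$ using $(1+1/\ell)^{\ell}<e$). As usual, replacing $G$ by its complement we may assume $\ell=\minell=\min\{\ell,\binom{k}{r}-\ell\}$, and since $I_{r}(n,k,\ell)$ is nonincreasing in $n$ it suffices to bound $\Pr(X_{G,k}=\ell)$ over $n$-vertex $r$-graphs $G$. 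Crucially, however, unlike in \cref{sec:almost,sec:1-eps} we must keep $n$ large: the extremal examples --- the $r$-graph consisting of $\lfloor n/r\rfloor$ pairwise disjoint edges, with $n$ chosen so that $\E X_{G,k}\to\ell$, for which $X_{G,k}$ is asymptotically $\operatorname{Poisson}(\ell)$ --- require $n$ much larger than $k$. When $\ell$ itself grows with $k$, the target $\ell^{\ell}e^{-\ell}/\ell!$ is $o_{k}(1)$, and one expects this to follow from suitable generalisations of the anticoncentration arguments behind \cref{thm:almost,thm:hypergraph-sqrt} (in the range $\ell=\omega(k)$ where $\E X_{G,k}$ is necessarily far from $\ell$ this is already immediate from \cref{lem:concentration}). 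Thus the heart of the conjecture lies in the regime where $\ell$ is bounded, or grows only very slowly.

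In that regime the natural tool is Poisson approximation. Writing $X_{G,k}=\sum_{S\in E(G)}\one[S\subseteq A]$ as a sum of (positively associated) indicators, with $\lambda:=\E X_{G,k}=\sum_{S}\Pr(S\subseteq A)$, the Chen--Stein method (or Janson's inequalities) bounds $d_{\mathrm{TV}}\bigl(X_{G,k},\operatorname{Poisson}(\lambda)\bigr)$ by a quantity controlled by $\sum_{S}\Pr(S\subseteq A)^{2}$ together with $\sum_{S\cap S'\ne\emptyset}\bigl(\Pr(S\subseteq A)\Pr(S'\subseteq A)+\Pr(S\cup S'\subseteq A)\bigr)$, the latter sum running over ordered pairs of distinct edges sharing a vertex. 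The first term is always $o_{k}(1)$, since each $\Pr(S\subseteq A)\le(k/n)^{r}\to 0$ while $\lambda$ stays bounded; and the ``correlation'' term is $o_{k}(1)$ precisely when no bounded set of vertices is incident to a non-negligible fraction of $\lambda$. This motivates a dichotomy: either $G$ is ``spread out'' in this sense, in which case $d_{\mathrm{TV}}\bigl(X_{G,k},\operatorname{Poisson}(\lambda)\bigr)=o_{k}(1)$ and we are done since $\Pr(\operatorname{Poisson}(\lambda)=\ell)\le \ell^{\ell}e^{-\ell}/\ell!\le 1/e$ for every $\lambda$; or else there is a bounded vertex set $W$ such that the edges meeting $W$ account for a constant fraction of $\lambda$.

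In the ``clustered'' alternative one would condition on the (boundedly many) outcomes of $A\cap W$ and write $X_{G,k}=X_{W}+X'$, where $X_{W}$ counts the edges meeting $W$ and $X'$ counts those disjoint from $W$; conditionally, $X'$ is again a sum of indicators to which Poisson approximation (or induction) applies, while $X_{W}$ is a bounded sum of subgraph counts in lower-uniformity hypergraphs (arising as links of subsets of $W$), so that an induction on $r$ becomes available. One then checks --- exactly as one verifies by hand for cliques, stars and sunflowers --- that no such conditional distribution places mass more than $1/e+o_{k}(1)$ on any single value. The main obstacle, and the reason even the case $r=2$ (\cref{conj:1/e}) remains open, is to carry this out uniformly: in particular one must handle the ``intermediate'' situation in which $\lambda$ is far from $\ell$, so that $\{X_{G,k}=\ell\}$ is a lower-tail event, for clustered $G$ where Janson-type lower-tail bounds degrade; and every error term must be shown to be genuinely $o_{k}(1)$ rather than merely $o_{n}(1)$. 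A complete proof along these lines would require ideas beyond those developed in this paper, so the above should be read only as a plausible line of attack.
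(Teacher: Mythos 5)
This statement is one of the open conjectures recorded in the concluding section of the paper (the hypergraph analogue of \cref{conj:1/e}); the paper does not prove it, and the strongest result it establishes in this direction is \cref{thm:1-eps}, namely $\ind_{r}(k,\ell)\le 1-\varepsilon(r)$ with $\varepsilon(r)$ decaying exponentially in $r$. Your proposal is likewise not a proof, and you say so yourself in the final sentence, so it cannot be accepted as one. The concrete gap is the ``clustered'' alternative of your dichotomy: when a bounded vertex set $W$ carries a constant fraction of $\lambda=\E X_{G,k}$, the Chen--Stein error term does not vanish, and the conditional analysis you gesture at (conditioning on $A\cap W$, splitting $X_{G,k}=X_W+X'$, inducting on $r$ via links) is exactly where all the difficulty of the conjecture lives --- this is the regime containing cliques, stars, sunflowers, blow-ups and their combinations, and no uniform argument is given that every such conditional mixture puts mass at most $1/e+o_k(1)$ on a single value. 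Indeed even the graph case $r=2$ (\cref{conj:1/e}) is open in this paper precisely because of this case.

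That said, the parts of the sketch that are carried out are essentially correct: the reduction to bounding $\Pr\bigl(\operatorname{Poisson}(\lambda)=\ell\bigr)\le \ell^{\ell}e^{-\ell}/\ell!\le 1/e$, the observation that the regime $\ell=\omega_k(1)$ (where the target is $o_k(1)$) should be attacked by the anticoncentration machinery of the paper rather than by Poisson approximation, and the identification of the ``diagonal'' term $\sum_S \Pr(S\subseteq A)^2=O\bigl((k/n)^r\lambda\bigr)$ as harmless once $\lambda$ is bounded (which one may assume by \cref{lem:concentration}). You are also right to flag that all error bounds must be uniform over $G$ before the limit $n\to\infty$ is taken, which is precisely why the sup over clustered $G$ cannot be dismissed. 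In short: a reasonable plan of attack, correctly self-diagnosed as incomplete, for a statement the paper itself leaves as a conjecture.
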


It would be interesting even to prove a bound of the form $\ind_{r}\left(k,\ell\right)\le1-\varepsilon+o_{k}\left(1\right)$
for any $\varepsilon>0$ that does not depend on $r$; perhaps this
can be accomplished with a careful bare-hands fourth-moment computation
as in \cite{AHKT}. We remark however that the bound $\ind_{r}\left(k,\ell\right)\le1-2^{-4/3}3^{-16r}$
in \cref{thm:1-eps} is clearly not best-possible; instead of using
hypercontractivity for $\hyp\left(n,n/2\right)$ via \cref{lem:hyp-anticoncentration},
it would have been possible to give a less direct proof using the
much more developed theory of hypercontractivity for $\Rad^{n}$,
then appealing to the invariance principle in \cite{FM16}. This would
have given the bound $\ind_{r}\left(k,\ell\right)\le1-9^{-r}+o_{k}\left(1\right)$.
Of course, this still depends exponentially on $r$.

\subsection{Ramsey graphs}

It is also interesting to study the probabilities $\Pr\left(X_{G,k}=\ell\right)$
for restricted classes of (hyper)graphs $G$. If these probabilities
are small it would seem to give some evidence that the graphs in question
are very ``diverse'' or ``disordered''. In particular, say that
a graph is \emph{$C$-Ramsey} if it has no clique or independent set
of size $C\log_{2}n$. There has been a lot of work on diversity of
Ramsey graphs from various points of view; in particular, Kwan and
Sudakov \cite{KS2} recently resolved a conjecture of Erd\H os, Faudree
and S\'os which effectively says that if $G$ is an $O\left(1\right)$-Ramsey
graph then for many values of $k$, the random variables $X_{G,k}$
have large support (see also \cite{AB89,AB07,AK09,ABKS09,NST16,KS1} for related work). It would be very interesting to study the probabilities
$\Pr\left(X_{G,k}=\ell\right)$ for Ramsey graphs. For example, if
$G$ is an $n$-vertex $O\left(1\right)$-Ramsey graph, is it true
that
\[
\Pr\left(X_{G,n/2}=\ell\right)=O\left(1/n\right)
\]
for all $\ell$?

As mentioned in \cite{KS1}, it would also be interesting to ask a
more tractable version of this question for $X=e\left(G\left[A\right]\right)$,
where $A$ is a uniformly random vertex subset of an $O\left(1\right)$-Ramsey
graph $G$. In this case $X$ can be interpreted as a quadratic polynomial
of a random vector $\g\in\Rad^{n}$, so this question is closely related
to a conjecture of Costello \cite[Conjecture~3]{Cos13} attempting
to characterise the quadratic polynomials $f$ in $n$ variables with point probabilities
$\Pr\left(f\left(\g\right)=\ell\right)$ much larger than $1/n$ (see
also a related inverse theorem of Nguyen \cite{Ngu12}).

\subsection{Anticoncentration ``on the slice''}

Historically, almost all the work on anticoncentration has focused
on sums or low-degree polynomials of independent random variables
(for example, see the survey of Nguyen and Vu \cite{NV13} concerning
the Littlewood--Offord problem and its variants). A recent
exception is a Littlewood--Offord-type theorem ``on the
slice'' due to Litvak, Lytova, Tikhomirov, Tomczak-Jaegermann and
Youssef \cite[Proposition~4.10]{LLTTY17}. The aforementioned authors
proved an upper bound on the point probabilities $\Pr\left(f\left(\x\right)=\ell\right)$
for an $n$-variable degree-1 polynomial $f$ of a random vector $\x\in\hyp\left(n,n/2\right)$,
using the coupling in \cref{fact:coupling}, and used this result to
study the singularity probability of a random zero-one matrix with
fixed row and column sums. Specifically, they were able to show that
$f\left(\x\right)$ has good anticoncentration if $f$ has many pairs
of different coefficients, which means that $f$ is far from a multiple
of the polynomial $x_{1}+\dots+x_{n}$.

In this paper we generalised the above methods to higher-degree polynomials,
and effectively showed that for a degree-$d$ polynomial $f$, the
random variable $f\left(\x\right)$ has good anticoncentration if
there are many $2d$-tuples of coefficients which satisfy a
certain inequality. This is in some sense a combinatorial criterion,
and it would be interesting if an algebraic criterion could be proved
to also suffice. For example, does $f\left(\x\right)$ have good anticoncentration
whenever $f$ is in some sense far from a polynomial with $\left(x_{1}+\dots+x_{n}\right)$
as a factor?

There is also the possibility that more natural anticoncentration
theorems could be stated in terms of harmonic polynomials (recall
the definition from \cref{subsec:hypercontractivity}), which are in
some sense the correct representation of functions on the slice (see
for example \cite{FM16}). For harmonic polynomials of $\hyp\left(n,pn\right)$
random vectors, the invariance principle in \cite{FM16} can be used
to apply standard anticoncentration theorems, but the error terms
in this invariance principle prevent one from obtaining optimal bounds
in this way.

\begin{comment}
In general, we also wonder whether anticoncentration results for the
slice could be more generally useful in random matrix theory. In particular,
there is an appealing open problem concerning singularity of \emph{symmetric}
random matrices with fixed row and column sums (i.e. adjacency matrices
of random regular graphs), due to Vu \cite[Conjecture~8.3]{Vu08}.
See also the very recent work on this problem by Nguyen and Wood \cite{NW18}.
\end{comment}

Finally, one could also study anticoncentration phenomena for more
general combinatorial random variables; for example, functions of
random permutations, as in Hoeffding's combinatorial central limit
theorem \cite{Hoe51}.

\begin{comment}

\subsection{Further questions}

While in this paper we have focused our attention on upper-bounding
functions of the form $\ind\left(k,\ell\right)$, it may also be interesting
to consider lower bounds. In \cite{AHKT}, the authors define the
parameter $\eta\left(k\right)=\min\left\{ \ind\left(k,\ell\right):0\le\ell\le\binom{k}{2}\right\} $
and observe that $\eta\left(k\right)=\Omega\left(1/k\right)$. What
is the correct order of magnitude of $\eta\left(k\right)$? Of course,
similar questions can also be asked for hypergraphs.

\end{comment}

\vspace{0.3cm}
\noindent
{\bf Acknowledgments.}\, The authors would like to thank the referee and Lisa Sauermann for their careful reading of the manuscript and their valuable comments.

%\bibliographystyle{amsplain_initials_nobysame}
%\bibliography{references}

\begin{appendices}
\crefalias{section}{appsec}

\section{\label{sec:fox-sudakov}Unavoidable patterns in Hypergraphs}
For the sake of completeness and the convenience of the reader, in this section we provide a proof of (a generalisation of) \cref{lem:fox-sudakov}. This theorem appears as \cite[Theorem~4.2]{FS08}, but was stated without proof.
\begin{thm}
	\label{thm:fox-sudakov}
	For each $\varepsilon>0$ and positive integers $r$ and $t$, there is a positive integer $N=N(t,r,\varepsilon)$ such that the following holds. Consider any red-blue colouring
	of the complete $r$-graph with $n\ge N$ vertices which has at least $\varepsilon n^r$ edges in each colour. Then there are disjoint vertex subsets $V_{1},\dots,V_{r}$, each of size $t$, such that for every function
	$f:\left\{ 1,\dots,r\right\} \to\left\{ 1,\dots,r\right\} $, all
	the edges $\left\{ v_{1},\dots,v_{r}\right\} $ with $v_{i}\in V_{f\left(i\right)}$
	for $i\in\left\{ 1,\dots,r\right\} $ have the same colour, but the
	edge colouring of $V_{1}\cup\dots\cup V_{r}$ is not monochromatic.
\end{thm}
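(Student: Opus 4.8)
First I would plant a monochromatic ``box'' of each colour. By a classical theorem of Erd\H{o}s on Tur\'an numbers of complete multipartite hypergraphs \cite{Erdos64}, for every $\varepsilon>0$ and all $s,r$ there is $m_{0}$ so that every $r$-graph on $m\ge m_{0}$ vertices with at least $\varepsilon m^{r}$ edges contains the complete $r$-partite $r$-graph $K^{(r)}_{s,\dots,s}$. Applying this to the blue colour class, then deleting the $rs$ vertices so used and applying it to the red colour class on what remains (which still has $\Omega(n^{r})$ red edges), I obtain pairwise disjoint sets $B_{1},\dots,B_{r},R_{1},\dots,R_{r}$, each of size $s$, with $s$ a large constant to be fixed later, such that every edge meeting each of $B_{1},\dots,B_{r}$ is blue and every edge meeting each of $R_{1},\dots,R_{r}$ is red. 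Here $n$ is taken large in terms of $\varepsilon,r,s$. Throughout one may assume $r\ge 2$ and $t\ge 2$, the remaining cases being degenerate.

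Next I would homogenise the combined configuration. Write $D_{1},\dots,D_{2r}$ for $B_{1},\dots,B_{r},R_{1},\dots,R_{r}$. For each size-$r$ multiset $\mu$ on $\range{2r}$ in turn -- there are boundedly many, depending only on $r$ -- the edges of $\bigcup_{i}D_{i}$ whose vertices occupy the blocks according to $\mu$ are in bijection with a product of the form $\prod_{i}\binom{D_{i}}{\mu_{i}}$, so by the product form of Ramsey's theorem I can pass to sub-blocks on which this particular colouring is constant. Since shrinking blocks never destroys constancy of a pattern already treated, after processing all $\mu$ I arrive at sub-blocks $D_{i}'\subseteq D_{i}$ of size $t$ (this is why $s$ must be chosen large in terms of $r$ and $t$) with the property that the colour of every edge inside $\bigcup_{i}D_{i}'$ depends only on the multiset of blocks its vertices occupy. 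As passing to sub-blocks also cannot turn a monochromatic pattern bichromatic, the resulting reduced colouring $\psi$ on size-$r$ multisets over $\range{2r}$ still satisfies $\psi(\{1,\dots,r\})=\mathrm{blue}$ and $\psi(\{r+1,\dots,2r\})=\mathrm{red}$.

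Finally I would extract the required sets. Call a size-$r$ multiset \emph{admissible} if every multiplicity is at most $t$; these are exactly the ones realised by actual edges among blocks of size $t$. It suffices to find an $r$-set $Q\subseteq\range{2r}$ on which $\psi$ is not constant over the admissible multisets supported on $Q$, since then $\{D_{i}':i\in Q\}$ are pairwise disjoint sets of size $t$ for which every $f\colon\range r\to\range r$ yields edges of a single colour (namely $\psi$ of the corresponding block-multiset), while two admissible multisets supported on $Q$ of different $\psi$-colour witness non-monochromaticity of $\bigcup_{i\in Q}D_{i}'$. Suppose no such $Q$ exists, and let $c(Q)$ be the common value of $\psi$ on the admissible multisets supported on the $r$-set $Q$. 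If $Q,Q'$ are $r$-sets with $|Q\cap Q'|=r-1$, then since $r\ge2$ and $t\ge2$ one can distribute $r$ points among the $r-1$ elements of $Q\cap Q'$ with all multiplicities at most $t$, giving an admissible multiset that proves $c(Q)=c(Q')$. The Johnson graph on the $r$-subsets of $\range{2r}$, with adjacency being intersection of size $r-1$, is connected, so $c$ is constant; but $c(\{1,\dots,r\})=\mathrm{blue}\ne\mathrm{red}=c(\{r+1,\dots,2r\})$, a contradiction.

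The main obstacle is exactly this last point. A bare Ramsey-type argument applied to the whole coloured hypergraph only produces a \emph{monochromatic} homogeneous blow-up, so there is no way around first forcing both colours into the configuration (via the Tur\'an bound), and then arguing that once both colours are present on the $2r$ planted blocks, the connectivity of the Johnson graph compels some $r$-subset of them to witness non-monochromaticity. The homogenisation step itself is routine but does require some care in bookkeeping the finitely many multiset types.
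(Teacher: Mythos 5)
Your proof is correct but takes a genuinely different route from the paper's. The paper first establishes, via the inductive degree-counting in \cref{claim:high-mixed-degree} together with the K\H{o}v\'ari--S\'os--Tur\'an theorem and Erd\H{o}s's theorem (packaged as \cref{lem:mixed-complete}), the existence of blocks $V_1,\dots,V_{r-1}$ plus two further blocks $R,B$ such that every edge meeting each $V_i$ and $R$ is red and every edge meeting each $V_i$ and $B$ is blue; it then applies ordinary Ramsey inside $V_1$ to get a monochromatic clique, chooses $V_r$ inside whichever of $R,B$ has the opposite colour, and finishes with the Product Ramsey Theorem (\cref{lem:refine}). You instead plant two disjoint monochromatic complete $r$-partite configurations directly (one blue, one red) using only Erd\H{o}s's theorem, homogenise all $2r$ blocks at once via Product Ramsey, and then run a connectivity argument on the Johnson graph $J(2r,r)$ to extract an $r$-subset of blocks that is forced to carry both colours. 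Your route dispenses with the mixed-degree lemma and with K\H{o}v\'ari--S\'os--Tur\'an, replacing them by the Johnson-graph connectivity step, at the harmless cost of explicitly invoking $r\ge2$, $t\ge2$ to pack $r$ items into $r-1$ blocks of capacity $t$ (both proofs implicitly need these anyway). The one thing the paper's argument gives that yours does not is the remarked stronger conclusion that the edges inside $V_1$ specifically have the opposite colour from the fully mixed edges; this refinement is not used elsewhere, so both proofs serve the paper's purposes equally well.
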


We remark that our proof will actually give the slightly stronger statement that the edges contained in $V_1$ have a different colour than the edges with a vertex in each $V_i$.

At the heart of our proof of \cref{thm:fox-sudakov} is the following lemma.

\begin{lem}
	\label{lem:mixed-complete}
	For each $\varepsilon>0$ and positive integers $r$ and $q$, there is a positive integer $M=M(q,r,\varepsilon)$ such that the following holds. Consider any red-blue colouring
	of the complete $r$-graph with $n\ge M$ vertices which has at least $\varepsilon n^r$ edges in each colour. Then there exist disjoint vertex subsets
	$V_{1},\dots,V_{r-1},R,B$, each of size $q$, such that
	\begin{compactenum}	
		\item[\rm (i)] $\{v_1,\ldots,v_{r-1},v\}$ is coloured red for $v_1 \in V_1,\ldots,v_{r-1}\in V_{r-1},v\in R$, and
		\item[\rm (ii)] $\{v_1,\ldots,v_{r-1},v\}$ is coloured blue whenever $v_1 \in V_1,\ldots,v_{r-1}\in V_{r-1},v\in B$.
	\end{compactenum}
\end{lem}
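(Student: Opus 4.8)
The plan is to induct on the uniformity $r$, the case $r=1$ being trivial (for $n$ large there are $\ge q$ red and $\ge q$ blue singletons, so take $R$ and $B$ to be $q$ red, respectively $q$ blue, singletons, with no $V_i$'s). Assume the statement holds for $r-1$. I claim it suffices to produce a set $V_{r-1}$ of $q$ vertices and a disjoint ``reservoir'' $W$ of $\Omega(n)$ vertices such that, writing
\[
\mathcal R=\left\{S\in\binom{W}{r-1}:S\cup\{v\}\text{ is red for all }v\in V_{r-1}\right\}
\]
and defining $\mathcal B$ analogously with ``blue'', both $\mathcal R$ and $\mathcal B$ have at least $c\left|W\right|^{r-1}$ edges, for some constant $c=c(\varepsilon,r,q)>0$. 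Indeed, since for a fixed $v$ the set $S\cup\{v\}$ has a definite colour, $\mathcal R$ and $\mathcal B$ are disjoint, so we may $2$-colour $\binom{W}{r-1}$ by putting $\mathcal R$ in red, $\mathcal B$ in blue, and the remaining $(r-1)$-sets arbitrarily; this colouring has $\Omega(|W|^{r-1})$ edges of each colour, so the inductive hypothesis (applied with $c$ in place of $\varepsilon$) gives disjoint $q$-sets $V_1,\dots,V_{r-2},R,B\subseteq W$ with $V_1\times\cdots\times V_{r-2}\times R\subseteq\mathcal R$ and $V_1\times\cdots\times V_{r-2}\times B\subseteq\mathcal B$. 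Unwinding the definitions of $\mathcal R$ and $\mathcal B$, the $q$-sets $V_1,\dots,V_{r-1},R,B$ are pairwise disjoint and satisfy (i) and (ii).

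To build $V_{r-1}$ and $W$ I would proceed greedily, adding the vertices of $V_{r-1}$ one at a time while maintaining a linear-sized reservoir on which the ``joint red-link'' and ``joint blue-link'' of the vertices chosen so far are both dense. The engine for a single step is the $r$-uniform analogue of the following elementary fact: \emph{in any $2$-colouring of $K_N$ with $\Omega(N^2)$ edges of each colour there is a vertex whose red-degree and blue-degree are both $\Omega(N)$.} This has a short proof by contradiction: if every vertex had small red-degree or small blue-degree, partition the vertices into those of small blue-degree ($V_R$) and those of small red-degree ($V_B$); the density hypotheses force $|V_R|,|V_B|=\Omega(N)$, but then the edges between $V_R$ and $V_B$ are simultaneously plentiful in one colour (since $V_R$-vertices are almost entirely red-joined to $V_B$, and $V_B$-vertices almost entirely blue-joined to $V_R$) and scarce in that colour (since $V_B$-vertices have few red neighbours and $V_R$-vertices few blue neighbours), a contradiction. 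The $r$-uniform version is the same argument with ``degree'' replaced by the number of edges through a vertex and the counting carried out on the relevant link structures.

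I expect the main obstacle to be the bookkeeping in the greedy argument: one must apply the ``vertex with both links dense'' fact \emph{relative} to the joint-link structure already accumulated rather than to the whole colouring, control how the link densities and the reservoir size degrade over the $q$ steps so that the final constant $c$ depends only on $\varepsilon,r,q$, and deal with the degenerate situations that arise in the relativised double-counting — in particular when the accumulated red structure happens to have few red completions inside the reservoir, where one pivots to a blue homogeneous substructure instead. An alternative route, which sidesteps the greedy step, is to invoke the Erd\H{o}s box theorem in each colour; but the inductive scheme above seems to be the cleanest way to force the red and blue complete $r$-partite subgraphs we produce to share $r-1$ of their parts.
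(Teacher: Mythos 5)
Your overall framework is sound: reducing to the search for a $q$-set $V_{r-1}$ together with a reservoir $W$ on which the joint red-link $\mathcal R$ and joint blue-link $\mathcal B$ are both dense, and then invoking the inductive hypothesis at uniformity $r-1$, does correctly yield the lemma. The gap is in the greedy step that is supposed to build $V_{r-1}$: the ``vertex with both links dense'' fact does not relativise to the accumulated structures $\mathcal R^{(t)}$ and $\mathcal B^{(t)}$, because the $(r-1)$-sets that land in $\mathcal R^{(t)}$ carry no guarantee of having many \emph{further} red extensions beyond the vertices already chosen. Concretely, already for $r=2$ and $t=1$ take a vertex set $\{z\}\cup P\cup Q$ with $|P|,|Q|=\Omega(n)$, colour $z$ red to $P$ and blue to $Q$, colour $Q$ entirely red internally, and colour everything else (edges within $P$, edges between $P$ and $Q$) blue; both colours are dense. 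Choosing $v_1=z$ gives $\mathcal R^{(1)}=P$ and $\mathcal B^{(1)}=Q$, both dense, but no vertex other than $z$ has a single red edge into $P$, so the greedy is irrevocably stuck. The ``pivot'' you gesture at would require abandoning $v_1$ entirely rather than continuing the greedy, and you do not supply a criterion that identifies a good starting vertex (here, any vertex of $Q$) in advance or a mechanism that provably terminates.

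The paper circumvents this by first establishing a strictly stronger structural statement (its \cref{claim:high-mixed-degree}, proved by induction on $r$): a positive proportion of $(r-1)$-sets are \emph{individually} contained in $\Omega(n)$ red edges and $\Omega(n)$ blue edges. From this dense family of ``bichromatic'' $(r-1)$-sets one extracts, via two applications of the K\H{o}v\'ari--S\'os--Tur\'an theorem to the red and blue extension bipartite graphs, a still-dense subfamily $\S''$ with a common $q$-set $R$ of red extensions and a common $q$-set $B$ of blue extensions, and then applies the Erd\H{o}s box theorem once to $\S''$ to produce $V_1,\dots,V_{r-1}$. This is exactly the ``Erd\H{o}s box theorem in each colour'' route you mention as an alternative, but the ingredient missing from your sketch is the preliminary lemma guaranteeing a dense collection of $(r-1)$-sets good for \emph{both} colours simultaneously; without it there is no way to force the red and blue boxes to share their $(r-1)$ parts, and your greedy would need something equivalent to this lemma to control how the joint-link densities degrade over the $q$ steps.
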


We will deduce \cref{lem:mixed-complete} from the following lemma together with some classical extremal results.
	
\begin{lem}
\label{claim:high-mixed-degree}
For any $r\in\NN$ and $\varepsilon>0$,
let $\alpha_{r}\left(\varepsilon\right)=\left(\varepsilon/3\right)^{4^{r}}$.
Consider a red-blue colouring of $\binom{\range{n}}{r}$ with at least $\varepsilon n^{r}$ edges in each colour. Then there are $\alpha_{r}\left(\varepsilon\right)n^{r-1}$ $\left(r-1\right)$-sets of vertices that are simultaneously contained
in $\alpha_{r}\left(\varepsilon\right)n$ red edges and $\alpha_{r}\left(\varepsilon\right)n$
blue edges.
\end{lem}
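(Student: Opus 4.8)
The plan is to prove the lemma by induction on $r$. The base case $r=1$ is immediate: there is exactly one $0$-set, namely $\emptyset$, it is contained in all $n$ ``edges'' (that is, vertices), and at least $\varepsilon n\ge\alpha_1(\varepsilon)\,n$ of these are red and at least that many blue, since $\alpha_1(\varepsilon)=(\varepsilon/3)^4\le\varepsilon\le1$. So assume $r\ge2$ and that the statement holds for $r-1$; we may also assume $n$ is large and that $\varepsilon$ is at most a constant depending on $r$ (otherwise $\binom nr<2\varepsilon n^r$ and the hypothesis cannot be satisfied for large $n$), which leaves ample slack in the constants. The core of the inductive step is the following intermediate statement, which says that a constant fraction of the vertices individually see linearly many edges of each colour: \emph{in any red-blue colouring of $\binom{[n]}{r}$ with at least $\varepsilon n^r$ edges of each colour, there are at least $\beta n$ vertices $v$ each contained in at least $\beta n^{r-1}$ red edges and at least $\beta n^{r-1}$ blue edges}, for a suitable $\beta=\beta(r,\varepsilon)>0$ that is bounded below by a power of $\varepsilon$ (for each fixed $r$).

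Given this, the lemma follows by passing to links. For each of these $\beta n$ ``balanced'' vertices $v$, consider the link $G_v$: the red-blue colouring of $\binom{[n]\setminus\{v\}}{r-1}$ in which an $(r-1)$-set $T$ receives the colour of the edge $T\cup\{v\}$ of $G$. Then $G_v$ has at least $\beta n^{r-1}\ge\beta(n-1)^{r-1}$ edges of each colour, so by the induction hypothesis it contains at least $\alpha_{r-1}(\beta)(n-1)^{r-2}$ many $(r-2)$-sets $T$, each contained in at least $\alpha_{r-1}(\beta)(n-1)$ red and at least $\alpha_{r-1}(\beta)(n-1)$ blue edges of $G_v$; equivalently, each such $T\cup\{v\}$ is an $(r-1)$-set of $G$ with the analogous property in $G$ itself. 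Ranging over all $\beta n$ balanced vertices produces at least $\beta n\cdot\alpha_{r-1}(\beta)(n-1)^{r-2}$ pairs $(v,T\cup\{v\})$, and since each $(r-1)$-set arises in this way at most $r-1$ times, we obtain at least $\tfrac{\beta\,\alpha_{r-1}(\beta)}{r-1}\,n(n-1)^{r-2}$ distinct good $(r-1)$-sets, each contained in at least $\alpha_{r-1}(\beta)(n-1)$ red and blue edges. Since $\beta$ is bounded below by a fixed power of $\varepsilon$, unwinding this over the $r$ levels of the recursion produces the iterated exponent and yields the bound $\alpha_r(\varepsilon)=(\varepsilon/3)^{4^r}$, with room to spare because $\varepsilon$ is bounded in terms of $r$; this bookkeeping is routine.

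It remains to establish the intermediate statement, which we do in two steps. First, \emph{one} balanced vertex exists. Suppose not, fix a small $\delta=\delta(r,\varepsilon)>0$, and partition the vertex set into $L_{\mathrm r}$, the vertices lying in fewer than $\delta n^{r-1}$ red edges, and $L_{\mathrm b}$, those lying in fewer than $\delta n^{r-1}$ blue edges (these sets are disjoint once $\delta$ is small, since every vertex lies in $\binom{n-1}{r-1}>2\delta n^{r-1}$ edges). Charging each red edge that meets $L_{\mathrm r}$ to one of its vertices in $L_{\mathrm r}$ shows there are fewer than $\delta n^r$ such red edges; hence more than $(\varepsilon-\delta)n^r$ red edges lie entirely inside $L_{\mathrm b}$, which forces $|L_{\mathrm b}|\ge cn$ for some $c=c(r,\varepsilon)>0$, and symmetrically $|L_{\mathrm r}|\ge cn$. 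The same charging shows that the number of \emph{crossing} edges (those meeting both $L_{\mathrm r}$ and $L_{\mathrm b}$) is smaller than $2\delta n^r$, because a crossing red edge is charged inside $L_{\mathrm r}$ and a crossing blue edge inside $L_{\mathrm b}$. But the number of crossing edges equals $\binom nr-\binom{|L_{\mathrm r}|}{r}-\binom{|L_{\mathrm b}|}{r}$, and by convexity of $\binom{\cdot}{r}$ this quantity, over the range of $|L_{\mathrm r}|$ permitted by $|L_{\mathrm r}|,|L_{\mathrm b}|\ge cn$, is minimised at an endpoint, where it is still at least $c'n^r$ for some $c'=c'(r,\varepsilon)>0$ since both parts are linearly large. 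As $c'$ is bounded away from $0$ as $\delta\to0$, taking $\delta$ small enough gives $2\delta<c'$, a contradiction, so a balanced vertex must exist. Second, to obtain $\beta n$ of them, iterate: find a balanced vertex, delete it, and repeat. After deleting $m\le\beta n$ vertices, the induced colouring on the remaining $n-m$ vertices still has a positive-constant fraction of the $(n-m)^r$ edges in each colour, so the one-vertex statement still applies; and a vertex balanced in this smaller colouring remains balanced, with a slightly smaller constant, in the original, and was not among the deleted ones. Choosing $\beta$ appropriately then yields $\beta n$ balanced vertices.

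The step I expect to be the main obstacle is the existence of a single balanced vertex. The naive attempt --- counting $(r-1)$-sets that are ``rich'' in red and ``rich'' in blue and arguing that the two families must intersect --- only succeeds when $\varepsilon$ is bounded below by a constant of order $1/(r-1)!$, because otherwise the two families need not together exceed $\binom{n}{r-1}$. The crossing-edge argument via convexity is what makes the conclusion valid for all $\varepsilon>0$, and it crucially exploits that one partitions \emph{vertices} --- so that ``crossing'' edges behave well --- rather than $(r-1)$-sets. The link reduction, the deletion argument, and the tracking of constants are then all routine.
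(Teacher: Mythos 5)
Your proof is correct but takes a genuinely different route from the paper's. Both arguments induct on $r$, but they descend differently. The paper proceeds by contradiction entirely at the level of $(r-1)$-sets: it classifies $(r-1)$-sets as red-heavy, blue-heavy or balanced; shows the first two families each have size $\ge(\varepsilon/3)n^{r-1}$; applies the inductive hypothesis to the resulting auxiliary colouring of $\binom{[n]}{r-1}$ (which forces a strengthening of the induction to \emph{partial} colourings, since balanced $(r-1)$-sets are left uncoloured); finds an $(r-2)$-set $Z$ in many red-heavy and many blue-heavy sets; and derives a counting contradiction from the $r$-sets $X\cup Y$ with $Z\subseteq X$ red-heavy and $Z\subseteq Y$ blue-heavy. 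Your argument instead descends via vertex links: you first establish the new intermediate statement that $\Omega_r(\varepsilon) n$ vertices are individually balanced (linear red-degree and blue-degree), using a crossing-edge / convexity argument on the vertex partition into low-red-degree and low-blue-degree vertices, and then apply the inductive hypothesis directly to the $(r-1)$-uniform link $G_v$ of each balanced $v$ and pull back the good $(r-2)$-sets of $G_v$ to good $(r-1)$-sets of $G$. A nice feature of your approach is that it avoids partial colourings altogether, since links are honest two-colourings; the trade-off is that the link lives on $n-1$ vertices and the vertex-deletion step costs factors of $(1-1/n)$ and powers of $2$, so your version is cleanest for $n$ large (which is all that matters for \cref{lem:mixed-complete}), whereas the paper's purely counting contradiction holds verbatim for every $n$. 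Your constants also work out: $\beta=\Theta_r(\varepsilon)$ is ample for $\alpha_{r-1}(\beta)\ge\alpha_r(\varepsilon)$ and $\beta\,\alpha_{r-1}(\beta)/(r-1)\ge\alpha_r(\varepsilon)$ over the relevant range $\varepsilon\le 1/(2r!)$; a full write-up should carry these estimates through explicitly, but as you say this bookkeeping is routine.
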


\begin{proof}
We will prove by induction the stronger version of this statement
for \emph{partial} red-blue colourings where we allow $\alpha_{r+1}\left(\varepsilon\right)n^{r}$
edges to be uncoloured. The base case $r=1$ is trivial, so assume
that this claim holds for all uniformities less than some $r\ge2$.

Suppose to the contrary that this statement is false. Let $\R$ contain the $(r-1)$-sets of vertices which are in fewer than $\alpha_{r}\left(\varepsilon\right)n$
blue edges, and let $\B$ contain the $(r-1)$-sets which are in fewer than $\alpha_{r}\left(\varepsilon\right)n$
red edges. Then let $\S$ contain the remaining $(r-1)$-sets, which are each simultaneously contained in $\alpha_{r}\left(\varepsilon\right)n$ red edges and $\alpha_{r}\left(\varepsilon\right)n$
blue edges. We are assuming that $\left|\S\right|<\alpha_{r}\left(\varepsilon\right)n^{r-1}$.
The number
of red edges is then at most $\left|\R\right|n+\left|\S\right|n+\alpha_{r}\left(\varepsilon\right)n\left|\B\right|\le\left|\R\right|n+\alpha_{r}\left(\varepsilon\right)n^{r}+\alpha_{r}\left(\varepsilon\right)n^{r}$,
so $\left|\R\right|\ge\left(\varepsilon-\alpha_{r}\left(\varepsilon\right)-\alpha_{r}\left(\varepsilon\right)\right)n^{r-1}\ge\left(\varepsilon/3\right)n^{r-1}$.
We may similarly deduce that $\left|\B\right|\ge\left(\varepsilon/3\right)n^{r-1}$.

Now, consider the red-blue colouring of the edges of the complete
$n$-vertex $\left(r-1\right)$-graph, where we colour an edge red
if it is in $\R$ and blue if it is in $\B$. By induction, since at most $|\S|<\alpha_{r}\left(\varepsilon\right)n^{r-1}$ edges are uncoloured, there are
$\alpha_{r-1}\left(\varepsilon/3\right)n^{r-2}$ $\left(r-2\right)$-sets
that are simultaneously contained in $\alpha_{r-1}\left(\varepsilon/3\right)n$
sets from $\R$ and $\alpha_{r-1}\left(\varepsilon/3\right)n$
sets from $\B$. Since fewer than $\alpha_{r+1}\left(\varepsilon\right)n^{r}$
edges are uncoloured in total, we can find such an $(r-2)$-set $Z$ with the extra property that $Z$ is contained in at most 
$$\frac{\binom{r}{2}\alpha_{r+1}(\varepsilon)n^r}{\alpha_{r-1}(\varepsilon/3)n^{r-2}}< \alpha_r(\varepsilon)n^2$$
uncoloured edges. Let
$\Q$ be the collection of all $r$-sets of the form $X\cup Y$, for
$Z\subseteq X\in \R$ and $Z\subseteq Y\in \B$. Note that $X,Y$ are uniquely determined by their union because we can write $(X\cup Y)\setminus Z=\{x,y\}$, where $\{x\}\cup Z=X$ is red and $\{y\}\cup Z=Y$ is blue. So $\left|\Q\right|\ge\left(\alpha_{r-1}\left(\varepsilon/3\right)n\right)^{2}$.
On the other hand, let $N_Z\le n$ be the number of $X\in \R$ which include $Z$. By the choice of $\R$, fewer than $N_Z\alpha_{r}\left(\varepsilon\right)n\le\alpha_{r}\left(\varepsilon\right)n^{2}$
of elements of $\Q$ are blue, and by the choice of $\B$ fewer than $\alpha_{r}\left(\varepsilon\right)n^{2}$
are red. But we have seen above that fewer than $\alpha_{r}\left(\varepsilon\right)n^{2}$ are uncoloured, so $\left|\Q\right|\le 3\alpha_{r}\left(\varepsilon\right)n^2$. One can check that 
\[
\alpha_{r-1}\left(\varepsilon/3\right)^{2}> 3\alpha_{r}\left(\varepsilon\right),
\]
yielding our desired contradiction.
\end{proof}

\begin{proof}[Proof of \cref{lem:mixed-complete}]
	By \cref{claim:high-mixed-degree}, there is a collection $\S$ of $\Omega(n^{r-1})$ $(r-1)$-sets of vertices that are simultaneously contained in $\Omega(n)$ red edges and $\Omega(n)$ blue edges. Let $G_{\mathrm{red}}$ be the bipartite graph with vertex set $V(G_{\mathrm{red}})=\S\cup \range{n}$ and edge set $E(G_{\mathrm{red}})=\left\{(S,v):S\in \S,v\in \range{n},S\cup \{v\} \enskip \text{is red}\right\}$, and define $G_{\mathrm{blue}}$ in exactly the same way, using blue edges instead of red edges. As $e(G_{\mathrm{red}})=\Omega(|\S|n)=\Omega(n^r)$, it follows from the K\H{o}v\'ari--S\'os--Tur\'an theorem \cite{KST} that $G_{\mathrm{red}}$ must contain a complete bipartite graph with parts $\S'\subseteq \S$ and $R\subseteq \range{n}$ satisfying $|\S'|=|\S|^{1-o(1)}=n^{r-1-o(1)}$ and $|R|=q$. Similarly, applying the K\H{o}v\'ari--S\'os--Tur\'an theorem to the induced subgraph $G_{\mathrm{blue}}[\S'\cup \range n]$, we can find a complete bipartite subgraph of $G_{\mathrm{blue}}$ with parts $\S''\subset \S'$ and $B\subset \range{n}$ such that $|\S''|=|\S'|^{1-o(1)}=n^{r-1-o(1)}$ and $|B|=q$. Since $\S''$ is an $(r-1)$-graph on $\range{n}$ with $n^{r-1-o(1)}$ edges, a result due to Erd\H{o}s \cite[Theorem 1]{Erdos64} (essentially generalising the K\H{o}v\'ari--S\'os--Tur\'an theorem to hypergraphs) tells us that $\S''$ contains a complete $(r-1)$-partite $(r-1)$-graph whose parts $V_1,\ldots,V_{r-1}$ have the same size $q$. Clearly, the vertex subsets $V_1,\ldots,V_{r-1},R,B$ have the desired properties.
\end{proof}

To prove \cref{thm:fox-sudakov} we also need the following Ramsey-type result.

\begin{lem}
	\label{lem:refine}
	For all $r,t\in\NN$, there is $Q_{r}\left(t\right)\in \NN$
	such that the following holds. Consider a red-blue colouring of the
	edges of the complete $r$-graph, and consider vertex sets $V_{1}',\dots,V_{r}'$
	each of size at least $Q_r\left(t\right)$, such that all the edges
	with a vertex in each $V_{i}'$ have the same colour, and all the
	edges within $V_{1}'$ have the other colour. Then there are subsets
	$V_{i}\subseteq V_{i}'$ of size $t$ satisfying the conclusion of
	\cref{thm:fox-sudakov}.
\end{lem}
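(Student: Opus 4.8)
The plan is to iterate over the possible ``types'' of $r$-sets inside $V_1\cup\dots\cup V_r$, making each type monochromatic one type at a time, with each step an application of the product Ramsey theorem. We may assume $t\ge r$, which is all that is needed (\cref{lem:fox-sudakov} is used with $t=5$, $r=3$). Observe that for pairwise disjoint sets $V_1,\dots,V_r$, the colour class that an $r$-set $S\subseteq V_1\cup\dots\cup V_r$ ``ought'' to belong to is governed only by its \emph{type} $\tau(S)=(|S\cap V_1|,\dots,|S\cap V_r|)$, which is a composition of $r$ into $r$ nonnegative parts; there are only $N:=\binom{2r-1}{r-1}$ such compositions, a constant depending on $r$ alone. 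So it suffices to find pairwise disjoint $V_i\subseteq V_i'$ with $|V_i|=t$ such that for every composition $\mathbf{c}$, all $r$-sets $S\subseteq V_1\cup\dots\cup V_r$ with $\tau(S)=\mathbf{c}$ receive the same colour. The resulting colouring of $V_1\cup\dots\cup V_r$ is then automatically non-monochromatic: since $t\ge r$, the set $V_1\subseteq V_1'$ contains an $r$-set, which by hypothesis has the same colour as all edges inside $V_1'$, whereas any $r$-set meeting each $V_i$ in exactly one vertex has the \emph{other} colour, again by hypothesis (this also yields the stronger conclusion mentioned after \cref{thm:fox-sudakov}).

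For a single composition $\mathbf{c}=(c_1,\dots,c_r)$, I would use the observation that for pairwise disjoint sets $U_1,\dots,U_r$ the map $S\mapsto(S\cap U_j)_{j\,:\,c_j>0}$ is a bijection between the type-$\mathbf{c}$ $r$-subsets of $U_1\cup\dots\cup U_r$ and the product $\prod_{j\,:\,c_j>0}\binom{U_j}{c_j}$. Thus a red-blue colouring of the type-$\mathbf{c}$ edges is the same thing as a $2$-colouring of this product, and by the product Ramsey theorem --- a standard consequence of Ramsey's theorem, obtained by iterating it over the coordinates --- there is a function $R_{\mathbf{c}}$ such that, whenever $|U_j|\ge R_{\mathbf{c}}(m)$ for all $j$, one can pass to subsets $W_j\subseteq U_j$ with $|W_j|=m$ on which $\prod_{j\,:\,c_j>0}\binom{W_j}{c_j}$ --- equivalently, the family of type-$\mathbf{c}$ $r$-subsets of $W_1\cup\dots\cup W_r$ --- is monochromatic. (For indices $j$ with $c_j=0$ there is no constraint, and one just takes $W_j$ to be an arbitrary $m$-subset of $U_j$, to keep the part sizes controlled.)

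Granting this, the remainder is a routine iteration. Enumerate the compositions as $\mathbf{c}^{1},\dots,\mathbf{c}^{N}$, set $m_N=t$, recursively set $m_{i-1}=\max_{\mathbf{c}}R_{\mathbf{c}}(m_i)$ for $i=N,\dots,1$, and take $Q_r(t)=m_0$. Beginning with $V_j^{(0)}:=V_j'$ (of size at least $m_0$), at step $i$ apply the previous paragraph to $V_1^{(i-1)},\dots,V_r^{(i-1)}$ --- each of size at least $m_{i-1}\ge R_{\mathbf{c}^{i}}(m_i)$ --- using the colouring of type-$\mathbf{c}^{i}$ edges, obtaining $V_j^{(i)}\subseteq V_j^{(i-1)}$ of size at least $m_i$ on which all type-$\mathbf{c}^{i}$ edges have the same colour. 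Passing to subsets preserves monochromaticity of every type treated at an earlier step, so after $N$ steps all $N$ types are monochromatic simultaneously; shrinking each $V_j^{(N)}$ to an arbitrary subset $V_j$ of size exactly $t$ then produces the required sets.

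I expect the main obstacle to be the product Ramsey step: when cleaning up the type-$\mathbf{c}$ edges one must arrange that the monochromatic subsets in the various coordinates $U_j$ can be chosen \emph{simultaneously}, rather than with a dependence on the choices made in the other coordinates. This is precisely what the iterated form of Ramsey's theorem supplies, and it is really the only ingredient beyond bookkeeping.
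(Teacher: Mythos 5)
Your proposal is correct and follows essentially the same route as the paper: iterating the Product Ramsey Theorem over the finitely many ``types'' of $r$-sets (your compositions $\mathbf{c}$ correspond to the orbits of the paper's functions $f$ under permutation of the domain), shrinking one type at a time and observing that monochromaticity of earlier types persists when passing to subsets. The paper's proof is a two-sentence sketch; yours fills in exactly the bookkeeping and also spells out why the resulting colouring is automatically non-monochromatic (for $t\ge r$, which suffices for the application), so there is no substantive difference in approach.
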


\begin{proof}
	For every function $f:\left\{ 1,\dots,r\right\} \to\left\{ 1,\dots,r\right\} $,
	we say that $V_{1},\dots,V_{r}$ is \emph{$f$-good} if all the edges
	$\left\{ v_{1},\dots,v_{r}\right\} $ with $v_{i}\in V_{f\left(i\right)}$
	for $i\in\left\{ 1,\dots,r\right\} $ have the same colour. Provided $Q_r(t)$ is large enough, we can
	iteratively apply %\cref{lem:partite-ramsey} 
	the Product Ramsey Theorem (see for example \cite[Theorem~9.2]{Pro13}) to shrink the $V_{i}'$
	until they are $f$-good for every $f$.
\end{proof}

We are finally ready to prove \cref{thm:fox-sudakov}.
\begin{proof}[Proof of \cref{thm:fox-sudakov}]
	By Ramsey's theorem, there is a function $R_{r}:\NN \rightarrow \NN$
	such that every red-blue colouring of the edges of the complete 
	$r$-graph on $R_{r}\left(k\right)$ vertices has a monochromatic $k$-clique.
	
	We apply \cref{lem:mixed-complete} to obtain vertex subsets $V_1'',V_2''\ldots,V_{r-1}'',R,B$ each of size $R_r(Q_r(t))$.
	Let $V_{1}'$ be a monochromatic $Q_{r}\left(t\right)$-clique in $V_{1}''$, and
	assume without loss of generality that it is red. Choose $V_2'\subset V_2'',\ldots,V_{r-1}'\subset V_{r-1}''$ and $V_r'\subset B$ such that $|V_i'|=Q_r(t)$ for every $2\le i \le r$.
	Then apply \cref{lem:refine}.
\end{proof}

\end{appendices}
\end{document}